\DeclareMathOperator*{\esssup}{ess\,sup}
\newcommand{\R}{\mathbb{R}}
\newcommand{\dis}{\displaystyle}
\newenvironment{manualthm}[1]{%
	\manualthminner
}{\endmanualthminner}
\numberwithin{equation}{section}
\theoremstyle{plain}
\newtheorem{thm}{Theorem}[section]
\newtheorem{defi}{Definition}[section]
\newtheorem{remq}{Remark}[section]
\newtheorem{lem}{Lemma}[section]
\title{A class of fractional parabolic  reaction-diffusion systems {with control of {total} mass}: theory and numerics}
\author[1]{Maha Daoud }
\author[2]{El-Haj Laamri \thanks{Corresponding author: el-haj.laamri@univ-lorraine.fr}}
\author[1]{Azeddine Baalal}
\affil[1]{D\'epartement de Math\'ematiques et Informatique, 
	Facult\'e des Sciences A\"{i}n-chock, Universit\'e Hassan II, Casablanca  20100, Maroc}
\affil[2]{Institut Elie Cartan, Universit\'e de
	Lorraine, 54 506 Vandoeuvre-les-Nancy,
	France
	
	\medbreak
	
	{\normalsize  mahaadaoud@gmail.com, el-haj.laamri@univ-lorraine.fr, abaalal@gmail.com }}
\date{ \today}
\begin{document}
	\maketitle
	\begin{abstract}
	In this paper, we prove global-in-time existence of  strong solutions to a class of fractional 
	parabolic reaction-diffusion systems posed in a bounded domain of $\R^N$.
The nonlinear reactive terms are assumed to satisfy natural structure conditions which  provide non-negativity of the solutions and uniform control of the total mass. The diffusion operators are  of type $u_i\mapsto d_i(-\Delta)^s u_i$ where $0<s<1$.
 %
Global existence of strong solutions is proved under the assumption that the nonlinearities are at most of polynomial growth.
 Our results extend previous results obtained  when the diffusion operators are of type $u_i\mapsto -d_i\Delta u_i$.
 On the other hand,
 we use numerical simulations to examine the global existence of solutions to systems with exponentially growing right-hand sides, which remains so far an open theoretical question even in the case $s=1$.
	\bigbreak
	\noindent
	\textbf{Mathematics Subject Classification (2020):} 35R11, 35J62. 30G50, 47H10, 35B45.
	\medbreak
	\noindent
	\textbf{Keywords.} Reaction-diffusion system, fractional diffusion, strong solution, global existence, numerical simulation.
	\end{abstract}	
	
	\section{Introduction}
	
	The purpose  of this work is the study  of global existence in time of nonnegative strong  solutions to  fractional parabolic reaction-diffusion systems of the form:
	
	\begin{equation}\label{ReactiondiffusionSystem1}\tag{$S$}
		\left\{   \begin{array}{rcll}
			\forall i=1,\ldots,m,\hspace*{2.2cm}&&&\\
			\dis \partial_t u_i(t,\textbf{x})+d_i(-\Delta)^su_i(t,\textbf{x})&=&f_i(u_1(t,\textbf{x}),\ldots,u_m(t,\textbf{x})),&(t,\textbf{x})\in (0,T)\times\Omega,\\ \dis
			u_i(t, \textbf{x})&=&0,&(t,\textbf{x})\in (0,T)\times(\R^N\setminus\Omega),\\\dis
			u_i(0,\textbf{x})&=&u_{0i}(\textbf{x}),	&\textbf{x}\in\Omega,
		\end{array}
		\right.
	\end{equation}
	where $\Omega$ is a bounded regular open subset of $\R^N$ where $N\geq1$, 
	$0<s<1$, $m\geq 2$ and for each $i=1,\ldots m$,  the diffusion coefficient $d_i>0$ and $f_i$ is locally Lipschitz continuous.\\
	Here,  we mean by  $(-\Delta)^s$ the classical fractional Laplacian operator of order $s$  defined by		\begin{equation} \label{FractionalLaplacianDefinition}
		(-\Delta)^s\phi(\mathbf{x}):=a_{N,s} \text{\text{P.V.}} \int_{\mathbb{R}^{N}} \frac{\phi(\mathbf{x})-\phi(\mathbf{y})}{\|\mathbf{x}-\mathbf{y}\|^{N+2s}}d\mathbf{y},
	\end{equation}
	where $\|\cdot\|$ is the euclidean norm of $\R^N$, \text{P.V.} stands for the Cauchy principal value and $a_{N,s}$  is a normalization constant
	such that the following pair of identities :
	$$
	\lim _{s \rightarrow 0^{+}}(-\Delta)^{s} \phi=\phi 
	\quad\text{and}\quad
	\lim _{s \rightarrow 1^{-}}(-\Delta)^{s} \phi=-\Delta \phi
	$$
	holds (see, \textit{e.g.}, \cite[Proposition 4.4]{DiPalVal} and \cite[Proposition 2.1]{DaouLaam}). We refer readers not familiar with fractional laplacians to \cite{{DaouLaam}, {BisRadServ}, {DiPalVal}} and the references therein.

	Parabolic reaction-diffusion systems appear in various disciplines such as chemistry, biology and environment science (see~\cite{PierreSurvey,SchFis,Pis,Mur} and their bibliographies).	 
	%
	When governed by the classical Laplacian, such systems are related to normal diffusion processes.
	They have been extensively studied in the literature, see, for instance, \cite{PierreSurvey, LaamActa2011,FellLaam, LaamPert,LaamPierANHP, GoudVass10, PierSuzYam2019, SuzYam, CapGoudVass,  Soup18, FellMorgTang20, FellMorgTang20, AbdAttBenLaam} and the references therein.

	\medbreak
	
	\noindent However, in order to describe nonlocal diffusive processes, the classical Laplacian is no longer a well-suited model. Hence the interest of fractional Laplacian operators, which allow one to derive more advanced models, especially anomalous diffusion processes. 
	Additional insights can be found in \cite{AbaVal,Val,BucVal,Vaz1,Vaz2,HenLanStr,LisAl,DaouLaam}.	
	More generally, nonlocal partial differential equations (NPDEs) have come out for some years, and play a significant role in numerous scientific fields, as illustrated in \cite{Pod,KilSri} and their references.
	The fractional Laplacians are the prototypical example of nonlocal operators appearing in NPDEs.
	
	\medbreak
	
	\noindent Recently, many papers have investigated the existence of solutions to fractional reaction-diffusion problems consisting of a single parabolic equation ($m=1$) on open subsets of~$\R^N$. We refer the reader, for instance, to \cite{AndMazRosTol,GalWarma1,GalWarma2,LecMirRoq,BicWarZua2,BicWarZua3,FerRos}.
	
	\noindent On the other hand, there are few works dealing with \emph{systems} ($m\geq 2$) governed by fractional Laplacians, see~\cite{AtmaBirDaouLaam-Parabo, AtmaBirDaouLaam-Acta,AtmaBirDaouLaam-POT-GRAD, AtmaBirDaouLaam-s1-s2} and  ~\cite{{Ahmad+4Hindawi}, {Ahmad+Alsaedi+2}, {Alsaedi+Al-Yami+2}} in the case where $\Omega =\R^N$.
	 In this work, we are going to discuss the extensions of results known for $s=1$ or $m=1$ to the more general situation $0<s<1$ and $m\ge2$.
	
	\medbreak	
	
	Going back to System~\eqref{ReactiondiffusionSystem1}, we are only interested in nonnegative solutions, as in applications the unknown $\mathbf{u}=(u_1,\cdots,u_m)$  represents for example  concentrations of chemical species or population densities. 
	Therefore, the initial data have to be chosen nonnegative ; $u_{0i}\geq 0$ for each $i\in \{1,\cdots, m\}$. 
	It is  also well-known that the solutions (as long as they exist) remain nonnegative, provided that  the reaction  terms $f_i$ satisfy the so-called ``quasi-postivity" property, namely:
	\begin{equation}
		\forall 1\leq i\leq m,\quad f_i(r_1,\cdots,r_{i-1},0,r_{i+1},\cdots,r_m) \geq  0,
		\quad \forall \mathbf{r}=(r_1,\cdots,r_m) \in [0,+\infty)^m.
		\tag{{\bf P}}
	\end{equation}
	Furthermore, in many cases a control (sometimes even the preservation) of the total mass naturally follows the model, namely:
	\begin{equation}\label{ControlMass}
		\sum_{i=1}^m\int_\Omega u_i(t,\mathbf{x})\,d\mathbf{x} \leq C \sum_{i=1}^m\int_\Omega u_{0i}(\mathbf{x})\,d\mathbf{x} \quad\text{for some } C>0.
	\end{equation}
	The control (\ref{ControlMass})  is fulfilled if
	\begin{equation}
		\exists (a_1,\cdots a_m)\in (0,+\infty)^m \text{ such that } \dis\sum\limits_{i=1}^m a_if_i\leq 0,
		\tag{{\bf M}}
	\end{equation}
	or, more generally, if this sum grows at most linearly in its variables, 
	{{\it i.e.}
		\begin{equation}
			\forall \mathbf{r} \in[0,+\infty)^m,\; \sum\limits_{i=1}^m a_i f_i(\mathbf{r}) \leq C\Big[1+\sum\limits_{i=1}^m r_i\Big] \quad\text{for some } C\geq 0.
			\tag{{\bf M'}}
		\end{equation}
}	
	\medbreak
	
	\noindent Let us emphasize that properties 	{\bf (P)} and 	{\bf (M)} are satisfied in many applications, $e.g.$ in models describing evolution phenomena involving both spatial diffusion and chemical reactions. Nevertheless, there are some instances where the total mass of the solution is not controlled; for example : $m=2$, $f_1(u_1,u_2)=u_2^3$ and $f_2(u_1,u_2)=u_1^2$. 
	
	\bigbreak
	
	The question of global existence for reaction-diffusion systems has been a classical topic since the seventies, yet there are still many open and challenging problems. There are two major obstructions to the construction of global solutions. {\it Firstly}, when the right-hand sides $f_i$ have quadratic, even faster, growth for large values of the $u_i$'s. A  typical example  for three species is : 
	$$ f_1= \alpha_1g,\; f_2= \alpha_2g,\; f_3= -\alpha_3g \text{ where } g=u_3^{\alpha_3}-u_1^{\alpha_1}u_2^{\alpha_2} \text{ and } \alpha_1, \alpha_2, \alpha_3 \geq 1.$$
See~System \eqref{SystemChemReac-DiffCL} below for more precisions.	
	%
	{\it Secondly}, when the diffusion coefficients $d_i$ are very different, there is no {comparison principle}. Thus, no a priori estimates are available besides the $L^1$ control (\ref{ControlMass}).
	
	\smallbreak
	
	To put our work in context and highlight the novelty of this paper, let us briefly review the existing literature. 
	
	\medbreak
	
	\noindent $\bullet$ \underline{Case where the diffusions are driven by classical Laplacian ({\it i.e.} $s=1$)}
	
	\noindent It turns out that the structure \textbf{(P)}+ \textbf{(M)} does not keep the solution from blowing up in $L^\infty$-norm, even in finite time. 
	More precisely, the paper~\cite{PieSch2000} gives an explicit example of a two-species system fulfilling both \textbf{(P)} and \textbf{(M)}, with $d_1\neq d_2$ and strictly superquadratic polynomial nonlinearities, and such that 
	\begin{equation*}
		\exists T^*<+\infty,\quad \lim_{t\nearrow T^*}\Vert u_1(t, \cdot)\Vert_{L^\infty(\Omega)}= \lim_{t\nearrow T^*}\Vert u_2(t, \cdot)\Vert_{L^\infty(\Omega)}=+\infty.
	\end{equation*}
	Thus, in addition to the structure \textbf{(P)}+ \textbf{(M)}, some growth restrictions and extra structure on the nonlinearities are needed if one expects global existence of strong (or even weak) solutions.
	
	\smallbreak
	
	\noindent--- {\bf Strong solutions:}
	This issue has been intensively studied, especially when the initial data are bounded. Let us review some sufficient conditions on the $f_i$'s guaranteeing the global existence of a {\it strong} solution (see Definition \ref{Strong_solution}):
	\begin{itemize}
		\item[(i)] {\it triangular structure} (see~{\eqref{TriangularStructure-}}) and  polynomial growth. For more details, we refer to~\cite{PierreSurvey} and references therein. A  prototypical example of triangular structure when $m=2$ is $f_1\leq 0$ and $f_1+f_2\leq 0$;
		\item[(ii)] quadratic growth. See, for instance,~\cite{GoudVass10, PierSuzYam2019, CapGoudVass,  Soup18, FellMorgTang20, FellMorgTang21});
		\item[(iii)] diffusion coefficients close to one another, see, $e.g.$ \cite{CanDesFel, FellLaam} (sometimes called \emph{quasi-uniform} in some references, for example  in~\cite{FellMorgTang20, FellMorgTang21}); 
		\item[(iv)] when the growth of the~$f_i$'s is slightly stronger than polynomial, few results are known, and only for $m=2$. The most-studied model by far is $f_2(u_1,u_2)=-f_1(u_1,u_2)=u_1e^{u_2^\beta}$. In this case, the existence of strong solutions is established in \cite{HaraYouk} for $\beta<1$, and \cite{Barabanova} for $\beta=1$. 
		The article \cite{RebiBena} gives some extensions of those results. Surprisingly, in the three previous works, the global existence is established under a restrictive assumption on the size of the initial data $u_{01}$. On the other hand, the problem is still open for $\beta>1$.
	\end{itemize}
	\noindent--- {\bf Weak solutions:} By a {\it weak} solution, we 
	mean a solution in the sense of distributions or, equivalently here, in the sense of the variation-of-constant formula with the suitable semigroup (see Definition \ref{Weak_solution}). 
	Such weak $L^1$-solution had already been considered in \cite{PierreL1, LaamThese, BoudibaThese} to handle initial data in $L^1$. However, an extra condition of  {\it triangular structure} of the nonlinearities was required. Furthermore~\cite{Pierre3} (see also \cite[theorem 5.9]{PierreSurvey} for $m\geq 2$), if for some reason the $f_i$'s are bounded in $L^1((0,T)\times \Omega)$ for any $T$, then there exists a global weak solution for \emph{any} initial data in $L^1(\Omega)$, without any further assumption beyond non-negativity. This $L^1$~bound can be established under any one of the following assumptions:  
	\begin{enumerate}
		\item there are $m$ independent inequalities between the $f_i$'s, and the $u_{0i}$'s are just assumed to be in~$L^1(\Omega)$. For example, System~\eqref{ReactiondiffusionSystem1} where $m=2$ and $f_2(u_1,u_2)=-f_1(u_1,u_2)=u_1e^{u_2^\beta}$ has a weak solution for any $\beta>0$ and any initial data  in $L^1(\Omega)$. We refer to~\cite{Pierre3, PierreSurvey} (see also \cite{LaamPierANHP});
		\item  the $f_i$'s are of quadratic growth, and the $u_{0i}$'s belong to $L^2(\Omega)$, see \cite[theorem 5.11]{PierreSurvey};
		\item the $f_i$'s are super-quadratic, as it occurs in complex chemical reactions, and the $u_{0,i}$'s are in $ L^1(\Omega)\cap H^{-1}(\Omega)$. For more details, see \cite{LaamPert} (see also \cite{CiavoPert}).
	\end{enumerate}
	Concerning the elliptic case, the interested reader is referred to
	\cite{LaamPierANHP, LaamPierDCDS}.

	\medbreak
	
	To conclude this short summary, we would like to point out that the above references do not exhaust the rich literature on the subject. The interested reader can find an exhaustive review of the results known before 2010 in the extensive survey~\cite{PierreSurvey}, which also gives a general presentation of the problem, further references, and many deep comments on the mathematical difficulties raised by such systems. For some more recent results, see \cite{LaamActa2011, FellLaam, CanDesFel, PierSuzYam2019, QuitSoup, CapGoudVass,  Soup18, FellMorgTang20, FellMorgTang21} and their references.

	\medbreak
	
	\noindent $\bullet$ \underline{Case where the diffusions are driven by the fractional Laplacian ({\it i.e.} $0<s<1$):}\\
	Unlike the  case $s=1$, relatively little is known. To our knowledge, the existing works fall into two broad categories: either the domain $\Omega$ is bounded and the right-hand sides are of potential-gradient or gradient-gradient type; or $\Omega=\R^N$ and the r.h.s. are of polynomial or exponential growth.
	\begin{itemize}
		\item[$\diamond$] $\Omega$ bounded:
		in~\cite{AtmaBirDaouLaam-Parabo}, the first two authors and their coworkers investigated  the case of a $2\times2$~System where  the right-hand sides are $f_1(u_1,u_2)= \|\nabla u_2\|^q +h_1$ and $f_2(u_1,u_2)= \|\nabla u_1\|^p +h_2$ with $h_1, h_2\geq 0$. Obviously, $f_1$ and $f_2$ do not {belong to} the framework of~\eqref{ReactiondiffusionSystem1}, nor do they satisfy~{\bf(M)}.
		They also studied  the elliptic version of the system in the following two cases : 
		$$
		\begin{array}{ll}
		(S_1)&\quad\quad (-\Delta)^s u_1 = \|\nabla u_2\|^q+ \lambda h_1 \;,\; (-\Delta)^s u_2 =  \|\nabla u_1\|^{p}+ \mu h_2,\\\\
		(S_2)&\quad\quad(-\Delta)^s u_1 = u_2^{q}+ \lambda h_1 \;,\; (-\Delta)^s u_2 =  \|\nabla u_1\|^{p}+ \mu h_2,
		\end{array}
		$$  
		where $\lambda,\, \mu>0$.
		For more details, we refer the reader to~\cite{AtmaBirDaouLaam-Acta} for $(S_1)$  and~\cite{AtmaBirDaouLaam-POT-GRAD} for $(S_2)$; see also~\cite{AtmaBirDaouLaam-s1-s2} for a generalized System $(S_1)$ with two different {diffusions} {\it i.e.} $s_1\neq s_2$.
		%
		\item[$\diamond$] $\Omega=\R^N$: the only works that we know  are those of~\cite{{Ahmad+4Hindawi}, {Ahmad+Alsaedi+2}, {Alsaedi+Al-Yami+2}}.
	\end{itemize}
	
	\medbreak
	
	As far as we know, the question of global existence of solutions to System~\eqref{ReactiondiffusionSystem1}, {set in} a bounded domain~$\Omega$, with the reaction terms satisfying  {\bf (P)+(M)} and of polynomial or exponential growth, has not been addressed so far. This is the main goal of this work. More specifically:
	%
	\begin{enumerate}
\item   we will extend to the fractional case ($0<s<1$) two main known results  in the  classical case (\textit{i.e.} $s=1$)  and the reaction terms are polynomial growing.  The first one deals with the typical case of reversible chemical reactions for three species (see Theorem~\ref{MainTh1}). The second one deals with the case where the number of equations $m\geq 2$ and the $f_i$'s have a triangular structure (see Theorem~\ref{MainTh2}). 
\item we present some numerical simulations to examine the global existence of solutions to System~\eqref{ReactiondiffusionSystem1} in the case where $m=2$, $f_2(u_1,u_2)=-f_1(u_1,u_2)=u_1e^{u_2^\beta}$ {with  $\beta > 1$}. See Section~\ref{S_exp}.
\end{enumerate}	
%
{
 In order to prove our main theorems, we will  extend several tools known in the classical case to the fractional case such as the maximal regularity theorem (see Theorem~\ref{BoundedSol}),  a Lamberton-type estimation in $L^p$ (see Theorem~\ref{DualProblemIneq}), the so-called Pierre's duality Lemma (see Lemma~\ref{FractionalDualityThm}).
 Such results are interesting in themselves. 
Nevertheless, it should be noted that it is not always possible to extend any result from the classical case to the fractional case; and when it is possible, it is usually far from being trivial.
See, for instance,~\cite{AtmaBirDaouLaam-Parabo, AtmaBirDaouLaam-Acta,AtmaBirDaouLaam-POT-GRAD, AtmaBirDaouLaam-s1-s2} in  a different context. 
}

\medbreak

For the ease of reader and the sake of completeness, let us now recall those two results  that we will extend. 
\smallbreak
%
	%
	%

		%
	%

	%
	{
	\noindent$\bullet$ {\bf First known result. } Let us consider the following System 
		\begin{equation}\label{SystemChemReac-DiffCL} 
			\left\{
			\begin{array}{rclll}\partial_t u_1(t,\textbf{x})-d_1\Delta u_1(t,\textbf{x})&=&
	f_1(u_1(t,\textbf{x}),u_2(t,\textbf{x}),u_3(t,\textbf{x}))		, & (t,\textbf{x})\in Q_T,
			 \\
 \partial_t u_2(t,\textbf{x})-d_2\Delta u_2(t,\textbf{x})&=& f_2(u_1(t,\textbf{x}),u_2(t,\textbf{x}),u_3(t,\textbf{x}))
 , & (t,\textbf{x})\in Q_T, 
 \\
  \partial_t u_3(t,\textbf{x})-d_3 \Delta u_3(t,\textbf{x})&=& f_3(u_1(t,\textbf{x}),u_2(t,\textbf{x}),u_3(t,\textbf{x}))
  , & (t,\textbf{x})\in Q_T, \\ u_1(t,\textbf{x})=u_2(t,\textbf{x})=u_3(t,\textbf{x})&=&0,&(t,\textbf{x})\in (0,T)\times\textcolor{black}{\partial}\Omega, 
 \\ u_1(0,\textbf{x})&=&u_{01}(\textbf{x}) \geq 0, & \textbf{x} \in \Omega, \\ u_2(0, \textbf{x})&=&u_{02}(\textbf{x}) \geq 0, & \textbf{x} \in \Omega, \\ u_3(0, \textbf{x})&=&u_{03}(\textbf{x}) \geq 0, & \textbf{x} \in \Omega,\end{array}\right.
		\end{equation}
		where
		$$ f_1= \alpha_1g,\; f_2= \alpha_2g,\; f_3= -\alpha_3g \text{ with } g=u_3^{\alpha_3}-u_1^{\alpha_1}u_2^{\alpha_2} \text{ and } \alpha_1, \alpha_2, \alpha_3 \geq 1.$$
		%
%
%
%
\noindent That system naturally arises in chemical kinetics when modeling the  following  reversible reaction 
	\begin{equation}\label{reaction-chimique}
		\alpha_1 U_1+\alpha_2 U_2 \rightleftharpoons \alpha_3 U_3
		\end{equation}
		where $u_1,\, u_2,\,  u_3$ stand for the density of $U_1,\, U_2$ and $U_3$ respectively, {and $\alpha_1$, $\alpha_2$, $\alpha_3$ are the stoichiometric coefficients}.
		\medbreak
\noindent  Let us notice that in  addition to fulfilling {\bf (P)}, $f_1$, $f_2$ and $f_3$ also satisfy {\bf (M)}, namely  $$\alpha_2\alpha_3f_1+\alpha_1\alpha_3f_2+2\alpha_1\alpha_2f_3~=~0.$$
		\medbreak
		%
		%
\noindent Now, we are ready to state  the first  known result, 
(see 
\cite[Theorems 1 et 2]{LaamActa2011}).
\begin{thm} \label{Thm-ReversibleChemicalReaction-DiffCL}
Assume that
			$(u_{01},u_{02},u_{03}) \in (L^\infty(\Omega))^3$. System~\eqref{SystemChemReac-DiffCL} admits a unique nonnegative global strong solution in the following cases 
			\\
			{\em (i)}  $\alpha_1+\alpha_2<\alpha_3$ ;\\
			{\em (ii)} $ \alpha_3=1$ whatever are  $\alpha_1$ and $\alpha_2$ ; \\
			{\em (iii)} $d_1=d_3$ or $d_2=d_3$ whatever are $\alpha_1$, $\alpha_2$ and $\alpha_3$ ;	\\		
		{\em (iv)} $d_1=d_2\neq d_3$  for any $(\alpha_1,\alpha_2,\alpha_3)\in [1,+\infty)^2\times (1,+\infty)$ such that $\alpha_1+\alpha_2\neq\alpha_3$.			
	\end{thm}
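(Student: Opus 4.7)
The plan is classical: construct a local nonnegative strong solution on a maximal interval $[0,T_{\max})$ by a contraction-mapping argument on the mild formulation (using the local Lipschitz character of the $f_i$'s, bounded initial data, and quasi-positivity \textbf{(P)}), then establish an a priori $L^\infty$ bound up to $T_{\max}$ to exclude finite-time blow-up. The algebraic identity
\begin{equation*}
\alpha_2\alpha_3 f_1 + \alpha_1\alpha_3 f_2 + 2\alpha_1\alpha_2 f_3 = 0
\end{equation*}
yields both \textbf{(M)} and, via a Pierre-type duality argument, an $L^p((0,T)\times\Omega)$ bound on $g$ (hence on each $f_i$) for some $p>1$ that will seed the bootstrap.

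The $L^\infty$ bound then splits into the four cases. In \emph{case (iii)}, say $d_1 = d_3 =: d$, the combination $w := u_1/\alpha_1 + u_3/\alpha_3$ satisfies $\partial_t w - d\Delta w = 0$ with Dirichlet data and bounded initial value, so $u_1, u_3 \in L^\infty$; the equation
\begin{equation*}
\partial_t u_2 - d_2\Delta u_2 + \alpha_2 u_1^{\alpha_1} u_2^{\alpha_2} = \alpha_2 u_3^{\alpha_3}
\end{equation*}
then has bounded right-hand side and sign-favorable absorption on the left, and the maximum principle yields $u_2 \in L^\infty$. In \emph{case (iv)}, $d_1 = d_2 =: d$, the combination $v := u_1/\alpha_1 - u_2/\alpha_2$ satisfies the homogeneous heat equation, so $v \in L^\infty$; this reduces the problem to a two-component estimate on $(u_2, u_3)$, which I would close using the duality lemma together with the balance condition $\alpha_1 + \alpha_2 \neq \alpha_3$ to ensure a non-degenerate exponent in the bootstrap.

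In \emph{case (i)}, summing the three equations yields
\begin{equation*}
\partial_t (u_1+u_2+u_3) - \sum_{i=1}^{3} d_i\,\Delta u_i = (\alpha_1+\alpha_2-\alpha_3)\,g,
\end{equation*}
where the coefficient is strictly negative, so the destabilizing positive part of $g$ is absorbed; iterating Pierre duality together with parabolic maximal regularity on the $L^p$ scale gives $u_3^{\alpha_3} \in L^p$ for every $p<\infty$, hence $u_3 \in L^\infty$, and the equations for $u_1, u_2$ then close by parabolic regularity. In \emph{case (ii)}, $\alpha_3 = 1$ makes $f_3$ linear in the dangerous variable $u_3$: a similar duality-plus-regularity bootstrap on $u_3$ gives $u_3 \in L^\infty$, after which the equations for $u_1, u_2$ are dominated by the bounded source $u_3$.

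The main obstacle is the $L^p$ bootstrap in cases (i) and (iv): without a comparison principle, each application of the duality lemma only upgrades integrability by a small increment, and closing the loop requires careful tracking of the interplay between the $d_i$'s and the stoichiometric exponents. The condition $\alpha_1 + \alpha_2 \neq \alpha_3$ in case (iv) is precisely what keeps the critical exponent away from a degenerate value, and the sign of $\alpha_1+\alpha_2-\alpha_3$ in case (i) is what drives the absorption that makes the iteration terminate in finitely many steps.
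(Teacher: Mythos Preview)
The paper does not prove this classical-Laplacian statement itself (it is cited from \cite{LaamActa2011}), but it proves the fractional analogue Theorem~\ref{MainTh1} by what it says is the same method, so that is the benchmark. Your treatment of cases (iii) and (iv) via the linear combinations $u_1/\alpha_1+u_3/\alpha_3$ and $u_1/\alpha_1-u_2/\alpha_2$ solving a homogeneous heat equation is the standard route, and the paper in fact leaves these two cases to the reader.

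The genuine gap is in case (i). Summing the three equations and observing that the coefficient of $g$ is negative does not by itself ``absorb'' anything: $g=u_3^{\alpha_3}-u_1^{\alpha_1}u_2^{\alpha_2}$ has no sign, and the duality lemma only delivers bounds of the form $\|u_1\|_{L^p}\le C(1+\|u_3\|_{L^p})$ and $\|u_2\|_{L^p}\le C(1+\|u_3\|_{L^p})$, which do not close on their own. The missing idea is a \emph{moment estimate on $u_3$}: multiply the third equation by $u_3^{q}$ and integrate. The diffusive term is nonnegative (energy/Stroock--Varopoulos), and the production term $\alpha_3\iint u_3^{q+\alpha_3}$ sits on the left with a good sign; the only bad term is $\alpha_3\iint u_1^{\alpha_1}u_2^{\alpha_2}u_3^{q}$. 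H\"older with exponents $(\beta,\gamma,(q+\alpha_3)/q)$ satisfying $1/\beta+1/\gamma+q/(q+\alpha_3)=1$, together with the duality bounds on $u_1,u_2$ in $L^{q+\alpha_3}$, controls this by $C\|u_3\|_{L^{q+\alpha_3}}^{\,q+\alpha_1+\alpha_2}$. It is precisely the strict inequality $\alpha_1+\alpha_2<\alpha_3$ that allows the choice $\beta\alpha_1,\gamma\alpha_2\le q+\alpha_3$ and makes the exponent $q+\alpha_1+\alpha_2$ strictly smaller than $q+\alpha_3$, so Young's inequality closes the estimate and gives $\|u_3\|_{L^{q+\alpha_3}}\le C$ for arbitrary $q$. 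Your sketch does not contain this mechanism, and ``iterating Pierre duality with maximal regularity'' is not a substitute: those two tools alone produce circular inequalities.

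Case (ii) has a smaller but similar issue: the paper does not bootstrap on $u_3$ first. It uses that $f_1\le\alpha_1 u_3$ when $\alpha_3=1$, hence $\|u_1(t)\|_{L^p(\Omega)}\le\|u_{01}\|_{L^p(\Omega)}+\alpha_1\int_0^t\|u_3(\tau)\|_{L^p(\Omega)}\,d\tau$, feeds in the duality bound $\|u_3\|_{L^p(Q_t)}\le C(1+\|u_1\|_{L^p(Q_t)})$, and closes by Gronwall on $t\mapsto\|u_1(t)\|_{L^p(\Omega)}^p$; the same for $u_2$. Only then is $u_1^{\alpha_1}u_2^{\alpha_2}\in L^q$ for large $q$, which gives $u_3\in L^\infty$ by maximal regularity. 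Your proposed direction (bootstrap on $u_3$ directly) would require control of $u_1^{\alpha_1}u_2^{\alpha_2}$ first, which you have not supplied.
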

	}	
	%
%
{
\noindent {\bf Comments:}\\ 
 Assume that the diffusion coefficients  $d_1,\,d_2$ and $d_3$  are very different. The question of global existence of a strong (even weak) solution to System \eqref{SystemChemReac-DiffCL}  in the case $2<\alpha_3 < \alpha_1+\alpha_2$ is widely open since at least 2011. This is very surprising from a chemical point of view because the  reaction~\eqref{reaction-chimique} is supposed to be reversible and therefore a strong solution should exist and be global as in the case $\alpha_1+\alpha_2< \alpha_3$.
}
More generally, that question is far from being understood. For example, the  global existence of a strong solution of the system modeling a reversible reaction of the form $\alpha_1 U_1+\alpha_2 U_2 \rightleftharpoons~\alpha_3 U_3+\alpha_4 U_4$ has only recently been proved and  in the sole case $\alpha_1=\alpha_2= \alpha_3=\alpha_4=1$, see~\cite{CapGoudVass,  Soup18, FellMorgTang20, FellMorgTang21}).

%
%
\medbreak
	\noindent$\bullet$ {\bf Second known result.} Introducing the classical counterpart of System~\eqref{ReactiondiffusionSystem1}, namely:
	\begin{equation}\label{ReactiondiffusionSystem2}
		\left\{   \begin{array}{rcll}
			\forall i=1,\ldots,m,\hspace*{1.45cm}&&&\\
			\dis\partial_t u_i(t,\textbf{x})-d_i\Delta u_i(t,\textbf{x})&=&f_i(u_1(t,\textbf{x}),\ldots,u_m(t,\textbf{x})),&(t,\textbf{x})\in (0,T)\times\Omega,\\ \dis
			u_i(t, \textbf{x})&=&0,&(t,\textbf{x})\in (0,T)\times\partial\Omega,\\\dis
			u_i(0,\textbf{x})&=&u_{0i}(\textbf{x}),	&\textbf{x}\in\Omega.
		\end{array}
		\right.
	\end{equation}
It holds that (see \cite[Theorem 3.5]{PierreSurvey} and its references, especially \cite{Morgan})
	\begin{thm} \label{Thm-TriangularStructure}
		Assume that for each $i\in\{1,\cdots,m\}$, $u_{0i}\in (L^\infty(\Omega))^+$ and   $f_i$ is at most of polynomial growth and satisfy $(\mathbf{P})$. Moreover, assume that  	
		\begin{equation}\label{TriangularStructure-}
			\left\{   \begin{array}{ll}
				\hskip-0.5mm\text{there \hskip-0.5mm exist \hskip-0.5mm  a vector } \mathbf{b} \in \mathbb{R}^m \text{and  \hskip-0.5mm a  \hskip-0.5mm lower triangular \hskip-0.5mm  invertible matrix} \hskip1.5mm Q \hskip-0.5mm\in \hskip-0.5mm M_m(\R^+) \text{ s.t. }\\
				\forall \mathbf{r}=(r_1,\ldots,r_m) \in[0,+ \infty)^m,\;\; Q\mathbf{f}(\mathbf{r}) \leq\Big[1+\sum\limits_{1 \leq i \leq m} r_i\Big] \mathbf{b}.
			\end{array}
			\right.
		\end{equation}	
		Then, System~\eqref{ReactiondiffusionSystem2} admits a  unique nonnegative global strong  solution.
	\end{thm}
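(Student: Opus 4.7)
The plan is to combine local well-posedness with a global-in-time $L^\infty$ a priori bound, the latter being obtained by an iterative bootstrap that exploits the triangular structure~\eqref{TriangularStructure-}. I would first set up local existence and uniqueness on a maximal interval $[0,T_{\max})$ of a classical $L^\infty$ solution via a Banach fixed-point argument on the Duhamel formula with the heat semigroups $e^{t d_i \Delta}$ (Dirichlet), using the local Lipschitz character of the $f_i$'s. Nonnegativity on $[0,T_{\max})$ follows from the quasi-positivity $(\mathbf{P})$ by a standard truncation / maximum principle argument applied to $(u_i)_-$. The classical blow-up criterion then reduces the theorem to establishing a uniform $L^\infty$ bound for $\max_i \|u_i(t)\|_{L^\infty(\Omega)}$ on $[0,T_{\max}\wedge T]$ for any $T<+\infty$.

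A first global $L^1$ bound comes for free from the last row of~\eqref{TriangularStructure-}: multiplying the $i$-th equation by $q_{mi}$, summing, and integrating over $\Omega$ yields a linear differential inequality for $t\mapsto \sum_i q_{mi}\int_\Omega u_i(t,\mathbf{x})\,d\mathbf{x}$, hence by Gronwall a bound on $\sum_i \|u_i(t)\|_{L^1(\Omega)}$ uniform on compact subintervals of $[0,T_{\max})$.

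The heart of the proof is an iterative bootstrap upgrading these $L^1$ bounds to $L^\infty$ bounds, processing the components $u_1,\ldots,u_m$ one by one along the rows of $Q$. For $u_1$, the first row gives $f_1 \le \tfrac{b_1}{q_{11}}(1+\sum_j u_j)$, i.e.\ a one-sided \emph{linear} upper bound on $f_1$. Combining the Duhamel representation of $u_1$ with the $L^p$-$L^q$ smoothing of the heat semigroup (equivalently, maximal parabolic regularity) and strengthening the $L^1$ control on $\sum_j u_j$ by a Pierre-type duality lemma, one progressively raises the integrability of $u_1$ up to $L^\infty$. Once $u_1\in L^\infty$, the polynomial growth hypothesis allows us to view the second row $q_{22}f_2 \le b_2(1+\sum_j r_j)-q_{21}f_1$ as a polynomial upper bound on $f_2$ in the remaining unknowns $u_2,\ldots,u_m$, and the same bootstrap (possibly combined with a Moser--Alikakos iteration at the final step) delivers $u_2\in L^\infty$. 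One iterates down the rows to obtain $u_i\in L^\infty$ for $i=3,\ldots,m$, which closes the argument.

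The main technical hurdle, to my mind, is making this $L^p$-bootstrap converge: at each row, the bound supplied by $Q$ is only linear in the $u_j$'s, yet the $f_i$'s themselves have genuinely polynomial growth, so after eliminating the already-controlled components $u_1,\ldots,u_{i-1}$ one must still absorb a polynomial nonlinearity into the heat-semigroup smoothing before the available gain in integrability is exhausted. This is the delicate step: it requires a careful choice of the sequence of Lebesgue exponents, uses crucially the positivity of the diagonal entries of $Q$ and the quasi-positivity $(\mathbf{P})$ (to turn the one-sided estimate $Qf\le\ldots$ into genuine two-sided $L^p$ control), and, importantly, imposes no restriction on the diffusion coefficients $d_i$.
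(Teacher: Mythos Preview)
Your sequential scheme has a genuine gap at the step $i\ge 2$. After securing $u_1\in L^\infty$, you propose to read row~2 as $q_{22}f_2 \le b_2(1+\sum_j u_j)-q_{21}f_1$ and, using the polynomial-growth hypothesis, treat this as a polynomial upper bound on $f_2$ in the remaining unknowns $u_2,\dots,u_m$. But $f_1=f_1(u_1,\dots,u_m)$ depends polynomially on \emph{all} components, so after freezing $u_1$ you obtain only $f_2 \le C\big(1+\sum_{j\ge 2}u_j^\gamma\big)$ with the full degree~$\gamma$. At that stage you have merely $L^1$ control on $u_2,\dots,u_m$, and a genuinely superlinear, fully coupled right-hand side cannot be absorbed by heat-semigroup smoothing or a Moser--Alikakos iteration starting from $L^1$. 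The ``delicate step'' you flag is not merely delicate: along this route it does not close, and the same obstruction recurs at every subsequent row.

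The paper quotes this classical theorem as known and proves its fractional analogue (Theorem~\ref{MainTh2}) by an argument that transposes verbatim to $s=1$; that argument avoids the trap by never substituting bounds into the $f_j$'s. For each $i\ge 2$ one rewrites row~$i$ as
\[
q_{ii}\big(\partial_t u_i - d_i\Delta u_i\big) \;\le\; b_i\Big(1+\sum_j u_j\Big) - \sum_{j<i} q_{ij}\big(\partial_t u_j - d_j\Delta u_j\big),
\]
keeping the $f_j$'s as differential expressions in the $u_j$'s, and tests against the nonnegative solution of the dual parabolic problem. Integration by parts yields $\|u_i\|_{L^p(Q_T)} \le C\big(1+\sum_{j<i}\|u_j\|_{L^p(Q_T)}\big)$ for every $p<\infty$, with \emph{linear} dependence and no trace of the polynomial growth. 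Induction on~$i$ together with a Gronwall argument on auxiliary linear solutions $w_i$ closes this into $u_i\in L^p(Q_T)$ for all $i$ and all $p$ simultaneously; only then is the polynomial hypothesis invoked, to place each $f_i$ in $L^q$ with $q>\tfrac{N+2}{2}$, whence $u_i\in L^\infty$ by maximal regularity in a single step. The structural point you are missing is that the triangular inequality must be exploited through $L^p$ duality across all rows at once, not by eliminating components in $L^\infty$ one at a time.
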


	
	

	
	
	\smallbreak

	The rest of this article is set out in the following manner. In Section 2, we present the necessary background for the remaining sections. First, we set up some basic definitions about the notion of semigroups, and some required results concerning the associated evolution problem. Then, we recall some fundamental properties of the fractional laplacian within the framework of semigroup theory. In Section 3, we focus on the fractional evolution equation and examine  the conditions that ensure the boundedness of the solution. Furthermore, we concentrate on significant concepts such as the dual problem,  {the maximal regularity and the comparison principle}. 
{Section 4 is divided into three subsections. In subsection 4.1,   we first investigate the local existence of solutions to System \eqref{ReactiondiffusionSystem1}, then we prove the global existence in a particular case and we end it by extending Pierre's duality lemma to the fractional case. 
{Subsections 4.2 and 4.3 are devoted respectively to the proofs of our two main theorems. In Section 5, we present some numerical simulations to examine the global existence of solutions to a $2\times 2$~System with exponential growth (see \eqref{ExampleReactiondiffusionSystem2x2} in page \pageref{S_exp}), which remains a theoretical open question even in the case $s=1$. At last, we introduce the proofs of Theorems \ref{BoundedSol} and \ref{maximumprinciple} in the Appendix.
}
	
	\medbreak
	
	Prior to ending this section, le us fix some notations.
	
	\smallbreak 	\noindent{\bf Notations.}\quad 
	Throughout this paper, we will use the following standard notations :
	\begin{itemize}
		\item[---] $\Omega$ is a bounded regular open subset of $\mathbb{R}^N$ {with $N\geq1$}. 
		\item[---]  For any $T>0$,  $Q_T:= (0,T)\times \Omega$. 
		\item[---] $\|\cdot\|$ is the Euclidean norm of $\R^N$. 
		\item[---]For any $p\in[1,+\infty)$, $\|\phi\|_{L^p(\Omega)}=\left(\dis\int_\Omega|\phi(\mathbf{x})|^pd\mathbf{x} \right)^{\frac1p}$ and $\|\psi\|_{L^p(Q_T)}=\left(\dis\int_0^T\hskip-2mm\int_\Omega|\psi(t,\mathbf{x})|^p dt d\mathbf{x} \right)^{\frac1p}$.
		\item[---] $\|\phi\|_{L^\infty(\Omega)}= \esssup\limits_{\mathbf{x}\in\Omega} |\phi(\mathbf{x})|$ and  $\|\psi\|_{L^\infty(Q_T)}= \esssup\limits_{(t,\mathbf{x})\in Q_T} |\psi(t,\mathbf{x})|$.
		\item[---] 	$
		\mathbb{H}_{0}^{s}(\Omega):=\{\phi\in L^2(\mathbb{R}^{N})\; ;\; \|\phi\|_{\mathbb{H}^{s}_0(\Omega)}<+\infty\;\text{ and }\; \phi=0 \; \text{ in } \;\mathbb{R}^N\setminus \Omega\}
		$, where
		$$ \|\phi\|_{\mathbb{H}^{s}_0(\Omega)}:=\left(\iint_{\mathbb{R}^{N}\times\R^N}  \frac{|\phi(\textbf{x})-\phi(\textbf{y})|^{2}}{\|\textbf{x}-\textbf{y}\|^{N+2s}} d\textbf{x} d\textbf{y}\right)^{\frac{1}{2}}.$$
	\end{itemize}

	\section{Preliminaries}
	In this section, we briefly review some relevant results for later use. First, we recall  existence results of solutions to evolution problems associated with semigroups, along with a Lamberton-type estimation based on semigroups on $L^2$. Then, we discuss the main properties of the fractional Laplacian and the semigroup generated by it on $L^2$. 
	\subsection{Evolution problems associated with semigroups}	
	Let $(E,\Sigma,\mu)$ be a measure space. Let $A$ be the infinitesimal generator of a strongly continuous semigroup  $\{S_A(t)\}_{t\geq 0}$ on $L^2(E,\Sigma,\mu)$ (or simply $L^2(E)$). 
	We refer readers not familiar with the notion of semigroups to standard works such as \cite[Chapter~1]{Pazy} and \cite[Chapter 7]{Vrabie}.
	
	\smallbreak\noindent
	Now, let us associate with the operator $A$ the following evolution problem :
	\begin{equation}\label{ProblemOfLamberton}
		\left\{\begin{array}{rcll}
			\dfrac{d w}{dt}-A w&=&h, & \text{in } (0,T), \\
			w(0)&=& w_0, & 
		\end{array}\right.
	\end{equation}
	where $h(t)$ and $ w_0$ belong to $L^p(E)$, $p\geq 1$.
	\medbreak\noindent
	Before stating the existence theorems, let us make precise what we mean by \textit{strong} solution and \textit{weak} solution to Problem \eqref{ProblemOfLamberton}.
	\begin{defi}[Strong solution]\label{Strong_solution}
		A function $ w\;:\;[0, T) \rightarrow L^p(E)$ is called a strong solution to  \eqref{ProblemOfLamberton}  
		if :\\
		{\em(i)} $ w \in \mathcal{C}([0,T);L^p(E))\cap\mathcal{C}^1(0,T;L^p(E))$ ;\\
		{\em(ii)} for any $t\in (0,T)$, $ w(t)\in D(A) $ ; \\
		{\em(iii)} Problem \eqref{ProblemOfLamberton} is satisfied $a.e.$ on $[0,T)$.
	\end{defi}
	\begin{defi}[Weak solution] \label{Weak_solution}
		Let $h\in L^1(0,T; L^p(E))$. A function $ w\in \mathcal{C}([0,T];L^p(E))$ is called a \textit{weak} solution \textcolor{black}{to  \eqref{ProblemOfLamberton}} if it satisfies the following integral equation :
		\begin{equation}\label{weaksolution}
			w(t)=S_A(t) w_0+\int_0^t S_A(t-\tau) h(\tau) d \tau,\quad \forall t\in[0,T].
		\end{equation}
	\end{defi}
	\noindent The first existence Theorem reads as (see, for instance, \cite[Chapter 4]{Pazy}):
	\begin{thm}\label{PazyExistence}
		Let $h\in L^1(0,T; L^p(E))$ and $w_0\in L^p(E)$. Then, Problem \eqref{ProblemOfLamberton} admits a unique weak solution.
	\end{thm}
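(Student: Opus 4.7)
The plan is to prove the theorem by direct verification of the variation-of-constants formula \eqref{weaksolution}. Since that formula fully determines $w$ in terms of $w_0$ and $h$, uniqueness of the weak solution is automatic, and the real content of the theorem is to show that the right-hand side of \eqref{weaksolution} defines an element of $\mathcal{C}([0,T]; L^p(E))$. Throughout I would rely on the standard growth estimate $\|S_A(t)\|_{\mathcal{L}(L^p(E))}\leq M e^{\omega t}$ valid on $[0,T]$ for a strongly continuous semigroup, combined with Bochner integrability criteria.

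The homogeneous term $t \mapsto S_A(t) w_0$ belongs to $\mathcal{C}([0,T]; L^p(E))$ directly from the strong continuity of $\{S_A(t)\}_{t\geq 0}$. For the convolution $v(t) := \int_0^t S_A(t-\tau) h(\tau)\, d\tau$, measurability of $\tau \mapsto S_A(t-\tau) h(\tau)$ follows from measurability of $h$ and strong continuity of the semigroup, and the pointwise bound
$$\|S_A(t-\tau) h(\tau)\|_{L^p(E)} \leq M e^{\omega T} \|h(\tau)\|_{L^p(E)}$$
together with the assumption $h \in L^1(0,T; L^p(E))$ yields Bochner integrability of the integrand and hence $v(t)\in L^p(E)$ for every $t\in[0,T]$.

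To establish continuity of $v$, for $0 \leq t_1 < t_2 \leq T$ I would split
$$v(t_2) - v(t_1) = \int_{t_1}^{t_2} S_A(t_2 - \tau) h(\tau)\, d\tau + \int_0^{t_1} \bigl[ S_A(t_2 - \tau) - S_A(t_1 - \tau)\bigr] h(\tau)\, d\tau.$$
The first integral has norm at most $M e^{\omega T} \int_{t_1}^{t_2} \|h(\tau)\|_{L^p(E)}\, d\tau$, which goes to zero by absolute continuity of the Lebesgue integral. For the second, the integrand tends to $0$ in $L^p(E)$ pointwise in $\tau$ by strong continuity of $S_A$ and is dominated by $2Me^{\omega T}\|h(\tau)\|_{L^p(E)} \in L^1(0,t_1)$, so dominated convergence closes the argument. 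The one subtle point is that the semigroup is introduced on $L^2(E)$ but the statement concerns $L^p(E)$; I would invoke the standing assumption (implicit in the reference to \cite{Pazy}, Chapter 4) that $\{S_A(t)\}_{t\geq 0}$ extends to a strongly continuous semigroup on $L^p(E)$ with an analogous exponential bound, so that all the estimates above remain valid and the proof is essentially a direct check.
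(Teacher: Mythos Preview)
Your argument is correct and is precisely the standard proof found in the cited reference \cite[Chapter~4]{Pazy}; the paper itself does not supply a proof of this statement but simply invokes that reference. Your observation about the $L^2$ versus $L^p$ issue is pertinent: in the paper this is resolved later via Theorem~\ref{TheoremSemigroup}(i), which guarantees the extension of the semigroup to a strongly continuous contraction semigroup on $L^p(\Omega)$ for $p\in[1,+\infty)$, so your invocation of an implicit extension hypothesis is exactly what the authors rely on as well.
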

	\noindent Now, let us assume that $\{S_A(t)\}_{t\geq 0}$ satisfies the  two additional assumptions :
	
	\begin{itemize}
		\item[{\bf(H1)}]  $\{S_A(t)\}_{t\geq 0}$ is a bounded analytic semigroup on $L^2(E)$ ;
		\item[{\bf(H2)}]  for any $p\in[1,+\infty]$ and any $\phi\in L^p(E)\cap L^2(E)$, we have
		$$
		\forall t\geq 0,\quad \|S_A(t)\phi\|_{L^p(E)}\leq\|\phi\|_{L^p(E)}.
		$$
	\end{itemize}
	
	
	\noindent The following theorem provides a well-known result for estimating the weak solution to Problem \eqref{ProblemOfLamberton} in $L^p$ when $w_0=0$,  see \cite[Theorem 1]{Lamberton}.

	\begin{thm}[Lamberton-type estimation]\label{LambertonEst}
		Assume that $w_0=0$ and $h\in L^p((0,T)\times  E)$ with $p\in(1,+\infty)$. Let $w$ be the 
		weak solution to  Problem \eqref{ProblemOfLamberton}. Then, there exists a constant $C_p:=C(p)>0$ such that
		\begin{equation}\label{InegalitedeLamberton2}
			\left\| \frac{dw}{dt}\right\|_{L^p((0,T)\times  E)}+  \left\|Aw\right\|_{L^p((0,T)\times  E)}\leq C_p   \left\|h\right\|_{L^p((0,T)\times  E)}.
		\end{equation}
	\end{thm}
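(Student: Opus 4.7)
The plan is to view $w$ via Duhamel's formula $w(t)=\int_0^t S_A(t-\tau)h(\tau)\,d\tau$ and to realize that $Aw(t)=(K*\tilde h)(t)$ is a vector-valued convolution with the operator-valued kernel $K(t)=AS_A(t)\mathbf{1}_{(0,\infty)}(t)$, where $\tilde h$ denotes the extension of $h$ by zero outside $(0,T)$. Once the $L^p$-bound on $Aw$ is secured, the identity $dw/dt=h+Aw$ (read in the weak sense of \eqref{weaksolution}) immediately transfers the same control to $dw/dt$, so the entire theorem reduces to proving boundedness of the operator $h\mapsto K*h$ on $L^p((0,T)\times E)$.

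First, I would handle the Hilbert-space case $p=2$. Analyticity on $L^2$ in \textbf{(H1)} yields the standard estimates $\|AS_A(t)\|_{\mathcal L(L^2)}\leq C/t$ and $\|A^2 S_A(t)\|_{\mathcal L(L^2)}\leq C/t^2$ for $t>0$. The $L^2$-maximal regularity then follows from de~Simon's theorem: by extending $h$ by zero to the whole real line and taking the Fourier/Laplace transform in time, Plancherel's identity together with the bounded $H^\infty$-functional calculus for the sectorial operator $-A$ on the Hilbert space $L^2(E)$ gives $\|Aw\|_{L^2(\mathbb R;L^2(E))}\leq C\|h\|_{L^2(\mathbb R;L^2(E))}$, and restriction to $(0,T)$ is free.

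Second, to extend to general $p\in(1,\infty)$, I would invoke Stein's interpolation theorem for analytic semigroups: hypotheses \textbf{(H1)} (analyticity on $L^2$) and \textbf{(H2)} (contractivity on every $L^q$, $1\leq q\leq\infty$) together imply that $\{S_A(t)\}_{t\geq 0}$ is an analytic semigroup on $L^p(E)$ for every $p\in(1,\infty)$, with the key gradient bound $\|AS_A(t)\|_{\mathcal L(L^p)}\leq C_p/t$ and its differentiated version $\|A^2 S_A(t)\|_{\mathcal L(L^p)}\leq C_p/t^2$. This upgrades the kernel $K(t)$ from an $L^2$-bounded object to one whose size and smoothness in $\mathcal L(L^p)$ match a classical one-dimensional Calder\'on--Zygmund kernel. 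Boundedness of $h\mapsto K*h$ on $L^p((0,T);L^p(E))\simeq L^p((0,T)\times E)$ then follows from the vector-valued Calder\'on--Zygmund theorem of Benedek--Calder\'on--Panzone, whose H\"ormander condition
\[
\int_{|t|>2|s|}\|K(t-s)-K(t)\|_{\mathcal L(L^p)}\,dt<\infty
\]
is verified by the fundamental theorem of calculus combined with the $1/t^2$ bound on $\partial_t K=A^2 S_A(t)$. Strong-type $(p,p)$ follows from strong-type $(2,2)$ via the usual weak-$(1,1)$ extrapolation and Marcinkiewicz interpolation.

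The main technical obstacle is the invocation of Stein's interpolation: (H2) does not a priori include self-adjointness, yet Stein's argument for analyticity of symmetric diffusion semigroups on $L^p$ ultimately needs only analyticity on $L^2$ and contractivity at the endpoints $p=1$ and $p=\infty$, so (H1)+(H2) are exactly enough. Making this step rigorous---together with the corresponding H\"ormander estimate at the level of operator norms uniformly in $p$---is the delicate part; once it is in place, the remainder is a routine Calder\'on--Zygmund argument and the passage from $Aw$ to $dw/dt$ via the equation itself.
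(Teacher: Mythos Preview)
The paper does not give its own proof of this statement: Theorem~\ref{LambertonEst} is quoted verbatim from \cite[Theorem~1]{Lamberton} and invoked as a black box. There is therefore nothing in the paper to compare your argument against at the level of proof.

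That said, your sketch is an accurate reconstruction of Lamberton's original strategy: de~Simon for $p=2$, operator-valued Calder\'on--Zygmund theory (Benedek--Calder\'on--Panzone) for the extrapolation, and the passage from $Aw$ to $dw/dt$ via the equation. The one point you flag as delicate---obtaining $\|AS_A(t)\|_{\mathcal L(L^p)}\leq C_p/t$ and $\|A^2S_A(t)\|_{\mathcal L(L^p)}\leq C_p/t^2$ from \textbf{(H1)}+\textbf{(H2)} alone---is handled not by Stein's theorem on symmetric diffusion semigroups (which would indeed need self-adjointness) but by Stein's interpolation theorem for \emph{analytic families}: for each fixed $t>0$ the map $z\mapsto S_A(te^{i\theta z})$ is analytic in a strip, bounded on $L^2$ at the edges by \textbf{(H1)}, and contractive on $L^1$, $L^\infty$ on the real line by \textbf{(H2)}; interpolation then yields boundedness of $S_A(\cdot)$ on $L^p$ in a sector, hence analyticity and the required resolvent-type bounds. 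With that clarification your outline is correct and essentially coincides with the argument in \cite{Lamberton}.
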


	\noindent We end this subsection by noting that when $h\equiv0$, we have $w(t)=S_A(t) w_0$ as the weak solution to \eqref{ProblemOfLamberton}. Hence, it is clear that not every weak solution to \eqref{ProblemOfLamberton} becomes a strong solution.
	%
%
	However, the two notions of solution coincide if $h$ satisfies a further condition, namely if  $h\in \mathcal{C}^1(0,T;L^p(E))$. More precisely, one has the next result (see, {\it e.g.}, \cite[Corollary 3.3]{Pazy}).  
	
	%
	
	\begin{thm} 
		Assume that $h\in L^1(0,T; L^p(E))\cap\mathcal{C}^1(0,T;L^p(E))$. Then, the 
		weak solution to Problem \eqref{ProblemOfLamberton} is a strong solution. \hfill\(\Box\) 
	\end{thm}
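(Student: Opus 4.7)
The plan is to decompose the weak solution as $w(t) = S_A(t)w_0 + v(t)$, where
$$v(t) := \int_0^t S_A(t-\tau)\, h(\tau)\, d\tau$$
is the Duhamel part. The homogeneous term $t\mapsto S_A(t)w_0$ is already continuous on $[0,T)$; on $(0,T)$ it inherits from the semigroup the identities $S_A(t)w_0\in D(A)$ and $\frac{d}{dt}S_A(t)w_0=AS_A(t)w_0$ in the settings relevant to the paper (in particular for an analytic $\{S_A(t)\}_{t\geq 0}$ as in (\textbf{H1})). So the real work is on $v$, and the goal is to show that $v\in\mathcal{C}([0,T);L^p(E))\cap\mathcal{C}^1(0,T;L^p(E))$, that $v(t)\in D(A)$ for every $t\in(0,T)$, and that $\frac{dv}{dt}(t)=Av(t)+h(t)$.

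I would first perform the change of variable $\sigma=t-\tau$, so that
$$v(t)=\int_0^t S_A(\sigma)\, h(t-\sigma)\, d\sigma,$$
which transfers the $t$-dependence to the smooth map $\tau\mapsto h(\tau)$. Using the $\mathcal{C}^1$ regularity of $h$ on $(0,T)$ with values in $L^p(E)$ together with the strong continuity of $\{S_A(\sigma)\}_{\sigma\geq 0}$, the Leibniz rule for Bochner integrals gives
$$\frac{dv}{dt}(t)=S_A(t)\, h(0)+\int_0^t S_A(\sigma)\, h'(t-\sigma)\, d\sigma,$$
and the right-hand side is continuous in $t\in (0,T)$.

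Next I would show $v(t)\in D(A)$ by the standard difference-quotient argument. For $\varepsilon>0$ small,
$$\frac{S_A(\varepsilon)-I}{\varepsilon}\, v(t)=\frac{1}{\varepsilon}\int_\varepsilon^{t+\varepsilon} S_A(\sigma)\, h(t+\varepsilon-\sigma)\, d\sigma-\frac{1}{\varepsilon}\int_0^t S_A(\sigma)\, h(t-\sigma)\, d\sigma.$$
Splitting the first integral into $[\varepsilon,t]$ and $[t,t+\varepsilon]$ and using the $\mathcal{C}^1$ regularity of $h$ to pass to the limit $\varepsilon\to 0^+$, one obtains the convergence in $L^p(E)$ of the left-hand side to
$$S_A(t)\, h(0)-h(t)+\int_0^t S_A(\sigma)\, h'(t-\sigma)\, d\sigma.$$
This simultaneously proves that $v(t)\in D(A)$ and identifies $Av(t)$ with the above expression. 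Comparing with the formula previously derived for $\frac{dv}{dt}(t)$ yields $\frac{dv}{dt}(t)-Av(t)=h(t)$. Adding the analogous identity for the homogeneous part and verifying the regularity conditions in Definition \ref{Strong_solution} finishes the argument.

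The main obstacle I expect is the rigorous justification of the difference-quotient limit that shows $v(t)\in D(A)$: one must carefully handle the two contributions produced by the $\varepsilon$-shift (a boundary contribution giving rise to the term $-h(t)$ and an interior contribution converging to the integral involving $h'$) and verify both convergences strongly in $L^p(E)$. It is precisely at this point that the assumption $h\in\mathcal{C}^1(0,T;L^p(E))$ is indispensable; mere continuity of $h$ would only provide a weak solution, not a strong one.
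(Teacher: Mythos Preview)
The paper does not give its own proof of this statement; it simply cites \cite[Corollary 3.3]{Pazy} and marks the theorem with a $\Box$. Your argument is precisely the classical proof one finds in Pazy --- decompose the mild solution into $S_A(t)w_0$ (handled on $(0,T)$ via the analyticity hypothesis (\textbf{H1})) and the Duhamel integral, then use the change of variable $\sigma=t-\tau$ together with the difference-quotient computation to show $v(t)\in D(A)$ and identify $Av(t)$ --- so there is nothing substantive to compare.

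One minor caveat worth recording: your Leibniz formula and the subsequent limit invoke $h(0)$, which the stated hypothesis $h\in\mathcal{C}^1(0,T;L^p(E))$ on the \emph{open} interval does not directly guarantee. The result in Pazy actually assumes $h\in\mathcal{C}^1([0,T];L^p(E))$, and the paper is slightly imprecise here; your argument goes through verbatim under that stronger assumption, or alternatively one can work on $[\delta,T)$ for small $\delta>0$ and use that the strong-solution regularity in Definition~\ref{Strong_solution} is only required on the open interval $(0,T)$.
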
 
	
	{
	{\bf From now on, we set $E=\Omega$}. As stated in the introduction,   $\Omega$ is assumed to be a bounded regular open subset of $\R^N$. 
	}

	\subsection{Semigroup generated by the Fractional Laplacian}
	

	It is well-known that the operator $A=-(-\Delta)^s$ with domain
	\begin{equation}\label{DA}
		D(A)=\{ \phi\in \mathbb{H}_{0}^{s}(\Omega),\; (-\Delta)^s\phi\in  L^2(\Omega)  \}
	\end{equation}
	generates a strongly continuous {{\it submarkovian}\footnote{A strongly continuous semigroup $\left\{S_A(t) \right\}_{t\geq 0}$  is said submarkovian if it satisfies the two following properties: \\
-- if $\phi\in L^2(\Omega)$ and $\phi\geq 0$, then $S_A(t)\phi\geq 0$ $\forall t\geq 0$~;\\
-- if $\phi\in L^\infty (\Omega)$, then $\|S_A(t)\phi\|_{L^\infty(\Omega)}\leq \|\phi\|_{L^\infty(\Omega)}$.
}} semigroup $\left\{S_A(t) \right\}_{t\geq 0}$ on $L^2(\Omega)$, see \cite[ Proposition 2.14]{GalWarma2} and \cite[Theorem 3.3]{ClausWarma}.

	\smallskip
	

	
\noindent The following theorem will be used throughout the rest of this paper. We refer, for instance, to \cite[Theorem 2.10]{ClausWarma} or \cite[Theorem 2.16]{GalWarma2} for further details.
		

	%

	%
	\begin{thm}\label{TheoremSemigroup} 
		Let
		$\{S_A(t)\}_{t\geq 0}$ be the submarkovian semigroup generated by  $A$ on $L^2(\Omega)$ with domain given by \eqref{DA}. Then:
		\\
		{\em(i)} the semigroup $\{S_A(t)\}_{t\geq 0}$ can be extended to a contraction semigroup on $L^p(\Omega)$ for any $p\in[1,+\infty]$. Moreover, each contraction semigroup is strongly continuous if $p\in[1,+\infty)$ and
		bounded analytic if $p\in(1,+\infty)$ ;\\
		{\em(ii)} the semigroup $\{S_A(t)\}_{t\geq 0}$ is ultracontractive, i.e., for any $\phi\in L^p(\Omega)$ and $1\leq~p\leq q\leq+\infty$, there exists a constant $C>0$ such that
		\begin{equation}\label{Ultracontractive} 
			\|S_A(t)\phi\|_{L^q(\Omega)}\leq C t^{-\frac{N}{2s}\left( \frac{1}{p}-\frac{1}{q}  \right)}\|\phi\|_{L^p(\Omega)}, \quad\forall t>0. 
		\end{equation}
	Moreover, if $p=q=+\infty$ and $C=1$, the semigroup $\left\{S_A(t)  \right\}_{t\geq 0}$ is said $L^\infty$-contractive.
		\hfill\(\Box\)
	\end{thm}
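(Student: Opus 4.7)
The plan is to exploit the submarkovian structure of $\{S_A(t)\}_{t\geq 0}$ on $L^2(\Omega)$. Since $-(-\Delta)^s$ is non-positive self-adjoint on $L^2(\Omega)$ (via its Dirichlet form $\mathcal{E}(\phi,\psi):=\tfrac{a_{N,s}}{2}\iint_{\R^N\times\R^N}\tfrac{(\phi(\mathbf{x})-\phi(\mathbf{y}))(\psi(\mathbf{x})-\psi(\mathbf{y}))}{\|\mathbf{x}-\mathbf{y}\|^{N+2s}}\,d\mathbf{x}\,d\mathbf{y}$), the semigroup is already $L^2$-contractive. The submarkovian hypothesis furnishes $L^\infty$-contractivity on $L^\infty(\Omega)\cap L^2(\Omega)$, and the self-adjointness of $S_A(t)$ allows this $L^\infty$-bound to be dualized into $L^1$-contractivity on $L^1(\Omega)\cap L^2(\Omega)$. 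Riesz--Thorin interpolation between the two endpoints then yields $L^p$-contractivity on $L^p(\Omega)\cap L^2(\Omega)$ for every $p\in[1,+\infty]$, and the density of this subspace in $L^p(\Omega)$ when $p\in[1,+\infty)$ permits a unique bounded extension of $S_A(t)$ to all of $L^p(\Omega)$.

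For the strong continuity on $L^p$ with $p\in[1,+\infty)$, I would combine the strong continuity on $L^2$ with the uniform $L^p$-contraction and a density argument: for $\phi\in L^p\cap L^2\cap L^\infty$ one has $S_A(t)\phi\to\phi$ in $L^2$, and an interpolation inequality between $L^2$ and $L^\infty$ upgrades convergence to $L^p$ on this dense subset; the uniform bound then extends the conclusion to all of $L^p(\Omega)$. For the analyticity on $L^p$ with $p\in(1,+\infty)$, I would invoke Stein's interpolation theorem: the semigroup is self-adjoint on $L^2$ and therefore extends to a bounded analytic semigroup on $L^2$, and symmetric Markovian semigroups satisfying the above endpoint contractivity bounds are known to be analytic on every $L^p$ with $p\in(1,+\infty)$.

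The ultracontractive estimate \eqref{Ultracontractive} follows from the classical Nash--Varopoulos strategy. I would first establish the fractional Nash inequality
\[
\|\phi\|_{L^2(\Omega)}^{2+4s/N} \leq C\,\mathcal{E}(\phi,\phi)\,\|\phi\|_{L^1(\Omega)}^{4s/N}
\qquad\forall\, \phi \in \mathbb{H}^s_0(\Omega)\cap L^1(\Omega),
\]
which follows from the fractional Sobolev embedding $\mathbb{H}_0^s(\Omega)\hookrightarrow L^{2N/(N-2s)}(\Omega)$ combined with H\"older's inequality. Setting $u(t):=S_A(t)\phi$ for $\phi\geq 0$, the identity $\tfrac{d}{dt}\|u(t)\|_{L^2}^2=-2\,\mathcal{E}(u(t),u(t))$ together with Nash's inequality and the $L^1$-contractivity produces an autonomous differential inequality whose integration yields $\|S_A(t)\phi\|_{L^2}\leq C\,t^{-N/(4s)}\|\phi\|_{L^1}$; self-adjointness gives the dual bound $\|S_A(t)\|_{L^2\to L^\infty}\leq C\,t^{-N/(4s)}$, and the semigroup property $S_A(t)=S_A(t/2)\circ S_A(t/2)$ yields $\|S_A(t)\|_{L^1\to L^\infty}\leq C\,t^{-N/(2s)}$. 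A final Riesz--Thorin interpolation between this endpoint bound and the $L^p$-contractivity delivers \eqref{Ultracontractive} in the full range $1\leq p\leq q\leq +\infty$.

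The main difficulty will lie in making the Nash inequality and the differential identity fully rigorous on the bounded domain with the $\mathbb{H}_0^s$ formulation: one must handle the case $N\leq 2s$ (where the critical Sobolev embedding degenerates and must be replaced by a Poincar\'e-type variant), justify the differentiation of $\|u(t)\|_{L^2}^2$ for $u(t)$ only in the domain of the form, and track the dependence of constants through the repeated interpolations. Once these technical points are dispatched, the remaining steps are standard semigroup manipulations.
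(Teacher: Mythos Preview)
The paper does not actually prove this theorem: it is stated as a known result with a \hfill$\Box$ and the reader is referred to \cite[Theorem~2.10]{ClausWarma} and \cite[Theorem~2.16]{GalWarma2}. Your sketch is essentially the classical Davies--Varopoulos route (Riesz--Thorin for the $L^p$-extension, Stein interpolation for analyticity, Nash inequality plus duality and interpolation for ultracontractivity) that underlies those cited results, so there is nothing to compare at the level of strategy.

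Two small points worth tightening in your write-up. First, your strong-continuity argument as stated only covers $p\geq 2$ (interpolation between $L^2$ and $L^\infty$); for $p\in[1,2)$ you should instead use that $\Omega$ is bounded, so $\|\cdot\|_{L^p}\leq |\Omega|^{1/p-1/2}\|\cdot\|_{L^2}$, and strong continuity in $L^2$ on the dense subset $L^2\cap L^p$ immediately transfers. Second, the case $N\leq 2s$ that you flag is indeed a genuine issue (it occurs only for $N=1$, $s\geq\tfrac12$); there the critical Sobolev exponent is infinite and one replaces the Nash inequality by a Gagliardo--Nirenberg or log-Sobolev variant, or simply appeals to the explicit heat-kernel bounds for the restricted fractional Laplacian available in the literature. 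Neither point affects the correctness of your overall plan.
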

	%
	%
	%

%
	\section{Fractional evolution problem}
	In order to study the existence and regularity of solutions to our System, we will rely on properties of solutions to the equation.
To do so, let us consider the following problem~:
	\begin{equation}\label{ParabolicProblem}
		\left\{\begin{array}{rclll}
			\partial_t w(t,\textbf{x})+d (-\Delta)^s w(t,\textbf{x})&=&h(t,\textbf{x}), & (t,\textbf{x})\in Q_T, \\
			w(t,\textbf{x})&=&0, & (t,\mathbf{x})\in(0,T)  \times(\mathbb{R}^N\setminus\Omega), \\
			w( 0,\textbf{x})&=&w_{0}(\textbf{x})\geq 0, & \textbf{x}\in\Omega,
		\end{array}\right.
	\end{equation}
	where $d>0$, $w_0\in L^p(\Omega)$ and $h\in L^p(Q_T$), $p>1$. 
	\vskip2mm
	\noindent First, we recall the conditions under which the solution to Problem \eqref{ParabolicProblem} exists and is bounded. Then, we consider the dual problem of \eqref{ParabolicProblem} in order to prove a Lamberton-type 
	estimation in $L^p$. 
	%
	%
	\subsection{Existence and Boundedness}
	Now, let us denote $A:=-d(-\Delta)^s$. 
	According to Theorem \ref{TheoremSemigroup}, the operator $A$ verifies the assumptions {\bf(H1)} and {\bf(H2)}.
	Hence, by Theorem \ref{PazyExistence}, Problem \eqref{ParabolicProblem}
	admits a unique weak solution $w\in \mathcal{C}([0,T],L^p(\Omega))$ which fulfills
	\begin{equation}\label{UniqueMildSolution}
		w(t,\cdot)=S_{A}(t) w_0+\int_0^t S_{A}(t-\tau) h(\tau,\cdot) d\tau.
	\end{equation} 
	Furthermore by Theorem \ref{LambertonEst}, when $w_0=0$ there exists a constant $C_p> 0$ such that 
	\begin{equation}\label{InegalitedeLambertonLP}
		\left\| \partial_{ t} w\right\|_{L^p(Q_T)}+ d \left\|(-\Delta)^sw\right\|_{L^p(Q_T)}\leq C_p   \left\|h\right\|_{L^p(Q_T)}.
	\end{equation}

\noindent The regularity of the weak solution to Problem \eqref{ParabolicProblem} relies on the two Theorems presented below. 

%
%
\medbreak
\noindent--- The first Theorem outlines the  necessary conditions for the solution to Problem \eqref{ParabolicProblem} to have a maximal regularity property in $L^\infty$. 
	

	\begin{thm} \label{BoundedSol}
	Assume that  $w_0 \in L^\infty(\Omega)$ and $h\in L^p(Q_T$) with $p>1$.
	Let $w$ be the weak solution to Problem \eqref{ParabolicProblem}. Then, for any $p>\dfrac{N+2s}{2s}$, there exists a constant $C>0$ such that
		$$
		\|w\|_{L^\infty(Q_T)}\leq \|w_0\|_{L^\infty(\Omega)}+C\|h\|_{L^p(Q_T)}.
		$$
	\end{thm}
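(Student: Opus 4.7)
The plan is to start from the Duhamel representation of the weak solution given by \eqref{UniqueMildSolution}, namely
$$
w(t,\cdot)=S_A(t)w_0+\int_0^t S_A(t-\tau)\,h(\tau,\cdot)\,d\tau,
$$
and estimate the two terms separately in $L^\infty(\Omega)$, uniformly in $t\in[0,T]$.

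For the first term, I would invoke Theorem~\ref{TheoremSemigroup}(i), which gives that $\{S_A(t)\}_{t\ge 0}$ is a contraction semigroup on $L^\infty(\Omega)$ (equivalently, apply the ultracontractivity estimate \eqref{Ultracontractive} with $p=q=+\infty$ and $C=1$, the $L^\infty$-contractivity stated at the end of Theorem~\ref{TheoremSemigroup}). This yields directly $\|S_A(t)w_0\|_{L^\infty(\Omega)}\le\|w_0\|_{L^\infty(\Omega)}$ for every $t\in[0,T]$, which accounts for the first summand on the right-hand side of the target inequality.

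For the Duhamel integral, the key tool is ultracontractivity from $L^p$ to $L^\infty$. Applying \eqref{Ultracontractive} with the exponent pair $(p,+\infty)$ gives
$$
\|S_A(t-\tau)h(\tau,\cdot)\|_{L^\infty(\Omega)}\le C\,(t-\tau)^{-\frac{N}{2sp}}\|h(\tau,\cdot)\|_{L^p(\Omega)}.
$$
Integrating in $\tau$ and applying Hölder's inequality with exponent $p$ and its conjugate $p'=p/(p-1)$, I obtain
$$
\Bigl\|\int_0^t S_A(t-\tau)h(\tau,\cdot)\,d\tau\Bigr\|_{L^\infty(\Omega)}\le C\Bigl(\int_0^t (t-\tau)^{-\frac{Np'}{2sp}}\,d\tau\Bigr)^{1/p'}\|h\|_{L^p(Q_T)}.
$$
The time integral converges provided $\frac{Np'}{2sp}<1$, i.e.\ $p-1>\frac{N}{2s}$, which is precisely the hypothesis $p>\frac{N+2s}{2s}$ assumed in the statement. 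Under this condition the resulting constant depends only on $p,N,s,T$, and we can absorb it in a single constant $C>0$.

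Summing the two contributions yields the desired bound. The main subtlety—really the only nontrivial point—is checking the sharp integrability condition on the singular kernel $(t-\tau)^{-N/(2sp)}$, which is exactly what dictates the threshold $p>(N+2s)/(2s)$; everything else is a direct application of Duhamel's formula, the $L^\infty$-contractivity, and the ultracontractive estimate \eqref{Ultracontractive} already recorded in Theorem~\ref{TheoremSemigroup}.
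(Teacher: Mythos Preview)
Your proof is correct and essentially identical to the paper's own argument: both start from the Duhamel formula \eqref{UniqueMildSolution}, bound the homogeneous term via the $L^\infty$-contractivity of the semigroup, apply the ultracontractive estimate \eqref{Ultracontractive} with $q=+\infty$ to the integrand, and then use H\"older's inequality in time to reduce to the integrability of $(t-\tau)^{-\frac{N}{2s(p-1)}}$, which gives exactly the threshold $p>\frac{N+2s}{2s}$.
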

	
	{
\noindent ---	The second Theorem
is related to the {comparison principle} for Problem \eqref{ParabolicProblem}.
}
	
%
%
%
	\begin{thm}\label{maximumprinciple} 	Assume that  $w_0 \in L^\infty(\Omega)$  and $h\in L^p(Q_T$) with  $p>1$. Let $w$ be the weak solution to Problem \eqref{ParabolicProblem}. If $h\leq 0$, then $w\in L^\infty(Q_T)$ and 
		$$
		\|w\|_{L^\infty(Q_T)}\leq\|w_0\|_{L^\infty(\Omega)}.	
		$$
	\end{thm}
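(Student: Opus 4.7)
The plan is to combine the variation-of-constants representation \eqref{UniqueMildSolution} of the weak solution $w$ with the two characterizing properties of the submarkovian semigroup $\{S_A(t)\}_{t \geq 0}$ generated by $A = -d(-\Delta)^s$: positivity preservation and $L^\infty$-contractivity, both recorded in Theorem~\ref{TheoremSemigroup}.

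First, I would write
\[
w(t,\cdot) = S_A(t)\,w_0 + \int_0^t S_A(t-\tau)\,h(\tau,\cdot)\,d\tau
\]
and estimate the two terms separately. For the homogeneous term, since $\{S_A(t)\}$ extends as a contraction to $L^\infty(\Omega)$ (this is Theorem~\ref{TheoremSemigroup}(i), equivalently the case $p=q=+\infty$, $C=1$ of the ultracontractivity estimate \eqref{Ultracontractive}), one gets the pointwise bound $\|S_A(t)\,w_0\|_{L^\infty(\Omega)} \leq \|w_0\|_{L^\infty(\Omega)}$ for every $t \geq 0$. For the Duhamel term, the hypothesis $h \leq 0$ means $-h(\tau,\cdot) \geq 0$, and the positivity-preserving property intrinsic to a submarkovian semigroup gives $S_A(t-\tau)\bigl(-h(\tau,\cdot)\bigr) \geq 0$ a.e.; integrating in $\tau$ yields
\[
\int_0^t S_A(t-\tau)\,h(\tau,\cdot)\,d\tau \;\leq\; 0 \quad \text{a.e. on } \Omega.
\]

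Combining the two estimates, for a.e.\ $(t,\mathbf{x}) \in Q_T$,
\[
w(t,\mathbf{x}) \;\leq\; S_A(t)\,w_0(\mathbf{x}) \;\leq\; \|w_0\|_{L^\infty(\Omega)},
\]
which is the sought bound. The membership $w \in L^\infty(Q_T)$ is already ensured by Theorem~\ref{BoundedSol}, so the argument simply upgrades that estimate by getting rid of the $C\|h\|_{L^p(Q_T)}$ contribution thanks to the sign condition on $h$.

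The main---and essentially only---technical point is to make sure that the positivity preservation and the $L^\infty$-contractivity, initially formulated for the $L^2$-semigroup, transfer consistently to the $L^p$-space in which $w_0$ and $h$ live, so that the Duhamel formula \eqref{UniqueMildSolution} can be evaluated a.e.\ on $Q_T$ and the pointwise monotonicity arguments above are legitimate. This is however built into Theorem~\ref{TheoremSemigroup}, where the submarkovian character automatically yields coherent extensions to every $L^p(\Omega)$, $p \in [1,+\infty]$, preserving both contractivity and positivity. Once this is granted, the proof reduces to the two one-line semigroup estimates above.
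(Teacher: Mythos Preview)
Your proof is correct and takes a genuinely different route from the paper's. The paper proceeds by an energy argument: it sets $v:=\|w_0\|_{L^\infty(\Omega)}-w$, multiplies the equation by $v^-=\min(v,0)$, integrates over $Q_T$, and exploits the sign structure of the nonlocal bilinear form (namely that $(w(t,\mathbf{x})-w(t,\mathbf{y}))(v^-(t,\mathbf{x})-v^-(t,\mathbf{y}))\geq 0$) together with $v^-(0,\cdot)=0$ to obtain a contradiction if $v^-$ is nontrivial. You instead read off the bound directly from the Duhamel representation, using only the two defining features of the submarkovian semigroup---positivity preservation to make the inhomogeneous contribution nonpositive, and $L^\infty$-contractivity to control the homogeneous part.

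Your approach is shorter and more conceptual: it relies solely on the abstract semigroup properties already packaged in Theorem~\ref{TheoremSemigroup} and would apply verbatim to any submarkovian generator. The paper's variational argument is more hands-on with the integral kernel of $(-\Delta)^s$ and closer in spirit to classical maximum-principle proofs; it has the merit of not invoking the mild-solution representation. Note that both your argument and the paper's establish only the one-sided inequality $w\leq \|w_0\|_{L^\infty(\Omega)}$, which is precisely what is used in all subsequent applications (where the solutions under consideration are nonnegative). One small correction: your appeal to Theorem~\ref{BoundedSol} to assert $w\in L^\infty(Q_T)$ is not legitimate here, since that theorem requires $p>\frac{N+2s}{2s}$ while Theorem~\ref{maximumprinciple} only assumes $p>1$; however, this appeal is unnecessary, as your own pointwise estimate already delivers the upper bound directly.
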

	
	{The two previous results are classical when $s=1$.  Surprisingly, we did not find, in the literature,  their equivalents in the fractional case. Therefore, we have  decided to introduce their proofs in this paper although they are  close to those of the $s=1$ case. Nevertheless, for reading fluency we preferred to postpone these proofs in the appendix.}

	\subsection{Fractional dual problem of Problem \eqref{ParabolicProblem}}

	In this subsection, we state and prove the existence and regularity result of solutions to the dual problem of  \eqref{ParabolicProblem}. Such result will be useful to prove 
	the global existence of solutions to our System.

	\noindent
   The dual problem of Problem \eqref{ParabolicProblem} is given by
	\begin{equation}\tag{$P_\varphi$}\label{DualProblem}
		\left\{\begin{array}{rclll}
			-\partial_t \mathcal{Z}(t,\textbf{x})+d(-\Delta)^s \mathcal{Z}(t,\textbf{x})&=&\varphi(t,\textbf{x}), & (t,\textbf{x})\in Q_T, \\
			\mathcal{Z}(t,\textbf{x})&=&0, & (t,\textbf{x})\in (0,T)\times(\R^N\setminus\Omega), \\
			\mathcal{Z}(T, \textbf{x})&=& 0, & \textbf{x}\in\Omega,
		\end{array}\right.
	\end{equation}
	where 
	$\varphi$ is a regular function.
	\medbreak
	%
	%
	{
\noindent The following theorem provides the conditions for the solution of the dual problem \eqref{DualProblem} to exist and verify a  Lamberton-type estimation in $L^p$.
}
{	\begin{thm} \label{DualProblemIneq} 
		Assume that $\varphi\in L^p(Q_T)$ with $p>1$. Then, Problem 
		\eqref{DualProblem} admits a unique weak solution. Moreover,
		there exists a constant $C:=C(p,T)>0$ such that    	\begin{equation}\label{DualityInequality}
			\left\| \partial_t \mathcal{Z}\right\|_{L^p(Q_T)}+  \|\mathcal{Z}\|_{L^p(Q_T)}+d \left\|(-\Delta)^s\mathcal{Z}\right\|_{L^p(Q_T)}+\|\mathcal{Z}_0\|_{L^p(\Omega)}\leq C   \left\|\varphi\right\|_{L^p(Q_T)},
		\end{equation}
		where $\mathcal{Z}_0:=\mathcal{Z}(0,\cdot)$.
\end{thm}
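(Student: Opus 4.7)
The plan is to reduce the backward problem $(P_\varphi)$ to the forward Cauchy problem \eqref{ParabolicProblem} by a simple time reversal, and then to invoke, in sequence, Theorems \ref{PazyExistence}, \ref{LambertonEst}, and the $L^p$-contractivity provided by Theorem \ref{TheoremSemigroup}. The reduction is clean because the fractional Laplacian is time-independent and the homogeneous Dirichlet exterior condition is symmetric in~$t$.

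Concretely, I would set $\tilde{\mathcal{Z}}(t,\mathbf{x}):=\mathcal{Z}(T-t,\mathbf{x})$ and $\tilde{\varphi}(t,\mathbf{x}):=\varphi(T-t,\mathbf{x})$. A direct computation shows that $\mathcal{Z}$ solves $(P_\varphi)$ if and only if $\tilde{\mathcal{Z}}$ is a weak solution of \eqref{ParabolicProblem} with $d$ unchanged, zero initial datum, and source $\tilde{\varphi}\in L^p(Q_T)$, and that $\|\tilde\varphi\|_{L^p(Q_T)}=\|\varphi\|_{L^p(Q_T)}$. Since $A=-d(-\Delta)^s$ generates a submarkovian semigroup on $L^2(\Omega)$ which, by Theorem \ref{TheoremSemigroup}, extends as a bounded analytic contraction semigroup on $L^p(\Omega)$ for every $p\in(1,+\infty)$, assumptions \textbf{(H1)}--\textbf{(H2)} hold. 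Theorem \ref{PazyExistence} then gives a unique weak solution $\tilde{\mathcal{Z}}\in\mathcal{C}([0,T];L^p(\Omega))$ (hence a unique $\mathcal{Z}$), and Theorem \ref{LambertonEst} provides a constant $C_p>0$ such that
\begin{equation*}
\|\partial_t\tilde{\mathcal{Z}}\|_{L^p(Q_T)}+d\,\|(-\Delta)^s\tilde{\mathcal{Z}}\|_{L^p(Q_T)}\leq C_p\,\|\tilde{\varphi}\|_{L^p(Q_T)}.
\end{equation*}
The change of variable $t\mapsto T-t$ preserves the $L^p(Q_T)$ norms and only flips the sign of the time derivative, so these bounds transfer to $\mathcal{Z}$ verbatim.

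It remains to control $\|\mathcal{Z}\|_{L^p(Q_T)}$ and $\|\mathcal{Z}_0\|_{L^p(\Omega)}=\|\tilde{\mathcal{Z}}(T,\cdot)\|_{L^p(\Omega)}$. Using the variation-of-constants representation
\begin{equation*}
\tilde{\mathcal{Z}}(t,\cdot)=\int_0^t S_A(t-\tau)\,\tilde{\varphi}(\tau,\cdot)\,d\tau,
\end{equation*}
together with Minkowski's integral inequality, the $L^p$-contractivity of $\{S_A(t)\}_{t\geq 0}$, and H\"older in time, I would obtain, for every $t\in[0,T]$,
\begin{equation*}
\|\tilde{\mathcal{Z}}(t,\cdot)\|_{L^p(\Omega)}\leq \int_0^t\|\tilde{\varphi}(\tau,\cdot)\|_{L^p(\Omega)}\,d\tau \leq T^{1-1/p}\,\|\varphi\|_{L^p(Q_T)}.
\end{equation*}
Specializing at $t=T$ controls $\|\mathcal{Z}_0\|_{L^p(\Omega)}$, and integrating once more in $t$ yields $\|\mathcal{Z}\|_{L^p(Q_T)}\leq T\,\|\varphi\|_{L^p(Q_T)}$. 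Collecting all four terms produces \eqref{DualityInequality} with $C=C(p,T):=C_p+T+T^{1-1/p}$.

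There is no genuine obstacle here beyond bookkeeping: all the analytic content sits in Theorems \ref{TheoremSemigroup} and \ref{LambertonEst}. The only mildly delicate point is checking that the time reversal preserves the notion of weak/mild solution, which amounts to a change of variable in the Duhamel formula; this is routine once one observes that the semigroup generator $-d(-\Delta)^s$ is autonomous.
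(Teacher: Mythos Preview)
Your proof is correct and follows essentially the same route as the paper: time reversal to a forward problem with zero initial data, existence via Theorem~\ref{PazyExistence}, the Lamberton estimate~\eqref{InegalitedeLambertonLP} for $\partial_t\mathcal{Z}$ and $(-\Delta)^s\mathcal{Z}$, and the mild-solution formula combined with $L^p$-contractivity for $\|\mathcal{Z}\|_{L^p(Q_T)}$ and $\|\mathcal{Z}_0\|_{L^p(\Omega)}$. The only cosmetic difference is that the paper bounds $\|\mathcal{Z}_0\|_{L^p(\Omega)}$ by integrating the equation over $(0,T)$ and reusing the Lamberton bound on $(-\Delta)^s\mathcal{Z}$, whereas you evaluate the Duhamel formula at the endpoint; your variant is in fact slightly more economical.
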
   }
	\begin{proof}
		In Problem \eqref{DualProblem}, let us make the following change of variable : $t=T-\tau\Rightarrow \tau=T-t\in (0,T) $. Furthermore, let us denote $w(\tau,\cdot):=\mathcal{Z}(T-\tau,\cdot)$ and $h(\tau,\cdot):=\varphi(T-\tau,\cdot)$.\\
		Then, Problem \eqref{DualProblem} becomes
		\begin{equation}\label{DualProblem2}
			\left\{\begin{array}{rclll}
				\partial_{ \tau}\mathcal{Z}(T-\tau,\textbf{x})+d(-\Delta)^s \mathcal{Z}(T-\tau,\textbf{x})&=&\varphi(T-\tau,\textbf{x}), & (\tau,\textbf{x})\in Q_T, \\
				\mathcal{Z}(T-\tau,\textbf{x})&=&0, & (\tau,\textbf{x})\in (0,T)\times(\R^N\setminus\Omega), \\
				\mathcal{Z}(T, \textbf{x})&=& 0, & \textbf{x}\in\Omega,
			\end{array}\right.
		\end{equation}
		which is equivalent to
		\begin{equation}\label{ProblemedeLamberton2}
			\left\{\begin{array}{rclll}
				\partial_{\tau} w(\tau,\textbf{x})+d(-\Delta)^s w(\tau,\textbf{x})&=&h(\tau,\textbf{x}), & (\tau,\textbf{x})\in Q_T, \\
				w(\tau,\textbf{x})&=&0, & (\tau,\textbf{x})\in (0,T)\times(\R^N\setminus\Omega), \\
				w(0, \textbf{x})&=& 0, & \textbf{x}\in\Omega.
			\end{array}\right.
		\end{equation}
		Hence, the existence of a unique weak solution to \eqref{DualProblem} follows from Theorem \ref{ParabolicProblem}. In addition, there exists a constant $C_1:=C_1(p)> 0$ such that
		\begin{equation}\label{DualityInequality1}
			\left\| \partial_t\mathcal{Z}\right\|_{L^p(Q_T)}+  d \left\|(-\Delta)^s\mathcal{Z}\right\|_{L^p(Q_T)}\leq C_1   \left\|\varphi\right\|_{L^p(Q_T)}.
		\end{equation}   
		Now, let us integrate the first equation of \eqref{DualProblem} over $(0,T)$. We obtain
		\begin{equation}
			-\int_0^T\partial_t \mathcal{Z}(t,\textbf{x}) dt=\int_0^T\left(\varphi(t,\textbf{x})-d(-\Delta)^s \mathcal{Z}(t,\textbf{x})\right)dt.
		\end{equation}   
		For any $\textbf{x}\in\Omega$, $\mathcal{Z}(T,\textbf{x})=0$ . Then,
		\begin{equation}
			\mathcal{Z}_0(\textbf{x})=\int_0^T\left(\varphi(t,\textbf{x})-d(-\Delta)^s \mathcal{Z}(t,\textbf{x})\right)dt.
		\end{equation}    
		Therefore, we get
		\begin{equation}\left\{  
			\begin{array}{lll}
				\|\mathcal{Z}_0\|_{L^p(\Omega)}&\displaystyle= \left\|\int_0^T(\varphi(t,\cdot)-d(-\Delta)^s \mathcal{Z}(t,\cdot))dt\right\|_{L^p(\Omega)}&
				\\&\displaystyle\leq \int_0^T\left\|\varphi(t,\cdot)-d(-\Delta)^s \mathcal{Z}(t,\cdot)\right\|_{L^p(\Omega)}dt
				&
				\\&\displaystyle\leq \int_0^T\left\|\varphi(t,\cdot)\right\|_{L^p(\Omega)}dt  +\int_0^T\left\|  d(-\Delta)^s \mathcal{Z}(t,\cdot)\right\|_{L^p(\Omega)}dt 
				&   \\&\displaystyle\leq T^{1-\frac{1}{p}}\left(\|\varphi\|_{L^p(Q_T)}  +d\|  (-\Delta)^s \mathcal{Z}\|_{L^p(Q_T)}\right) 
				&  \\&\displaystyle\leq T^{1-\frac{1}{p}} \max(1,C_1) \|\varphi\|_{L^pQ_T)}. & 
			\end{array}
			\right.
		\end{equation}
	The last inequality follows by the estimate \eqref{DualityInequality1}.   
		Hence,
		\begin{equation}\label{DualityInequality2}
			\|\mathcal{Z}_0\|_{L^p(\Omega)}\leq C_2 \|\varphi\|_{L^p(Q_T)},
		\end{equation}
		where $C_2:=C_2(p,T)> 0$.\\
		As $\mathcal{Z}$ is the weak solution to Problem \eqref{DualProblem}, thus it verifies the following integral equation :
		\begin{equation}
			\mathcal{Z}(t,\textbf{x})=\int_t^T S_{A}(\tau-t)\varphi(\tau,\textbf{x}) d\tau,\quad (t,\textbf{x})\in [0,T]\times \Omega.
		\end{equation}
		Thus, we have
		\begin{equation}
			\left\{  
			\begin{array}{lll}
				\|\mathcal{Z}(t,\cdot)\|_{L^p(\Omega)}&\leq  \displaystyle \left\|   \int_t^T S_{A}(\tau-t)\varphi(\tau,\cdot) d\tau \right\|_{L^p(\Omega)}&\\&\leq \displaystyle \int_t^T \|S_{A}(\tau-t)\varphi(\tau,\cdot)\|_{L^p(\Omega)} d\tau &  \\&\leq \displaystyle \int_0^T \|S_{A}(\tau-t)\varphi(\tau,\cdot)\|_{L^p(\Omega)} d\tau&\\ &\leq \displaystyle C \int_0^T \|\varphi(\tau,\cdot)\|_{L^p(\Omega)} d\tau & 
				\\&\leq \displaystyle C T^{1-\frac{1}{p}} \|\varphi\|_{L^p(Q_T)}.& 
			\end{array}
			\right.\end{equation} 
		Consequently, \begin{equation}		
			\displaystyle\|\mathcal{Z}\|_{L^\infty(0,T;L^p(\Omega))}\leq  C T^{1-\frac{1}{p}} 
			\|\varphi\|_{L^p(Q_T)}.
		\end{equation}
		As $L^\infty(0,T;L^p(\Omega))\hookrightarrow L^p(Q_T)$, we obtain the estimate \begin{equation}\label{DualityInequality3}
			\displaystyle\|\mathcal{Z}\|_{L^p(Q_T)}\leq  C_3 \|\varphi\|_{L^p(Q_T)},
		\end{equation} where $C_3:=C_3(p,T)> 0$.  \\
		Finally, let us denote $C:=\max(C_1,C_2,C_3)> 0$. From the estimates \eqref{DualityInequality1}, \eqref{DualityInequality2} and \eqref{DualityInequality3}, we obtain \eqref{DualityInequality}.
	\end{proof}
	\begin{remq} 
		Let us mention another notion of solution often found in the PDE literature.
		\\
		Let $\beta \in(0,1)$. It is well known (see \cite{LPPS, FelsKass}) that if $\varphi \in L^{\infty}\left(Q_T\right) \cap \mathcal{C}^{0, \beta}\left(Q_T\right)$, Problem \eqref{DualProblem} admits a regular solution $\mathcal{Z} \in L^{\infty}\left(Q_T\right)$ and the equation $-\partial_t \mathcal{Z}+(-\Delta)^s \mathcal{Z}=\varphi$ is satisfied in a pointwise sense.\\
		Let us multiply \eqref{UniqueMildSolution} by $\mathcal{Z}$ and integrate over $Q_T$. Then, we obtain
		\begin{equation}\label{WeakSolution}
			\iint_{Q_T} w(t,\mathbf{x})\left(-\partial_t\mathcal{Z}(t,\mathbf{x})+(-\Delta)^s \mathcal{Z}(t,\mathbf{x})\right) d\mathbf{x} d t=\iint_{Q_T} h(t,\mathbf{x}) \mathcal{Z}(t,\mathbf{x}) d\mathbf{x} d t+\int_{\Omega} w_0(\mathbf{x}) \mathcal{Z}_0(\mathbf{x}) d\mathbf{x}.
		\end{equation}
		A solution $\mathcal{Z}\in\mathcal{C}([0,T];L^1(\Omega))$ that satisfies \eqref{WeakSolution} is called a  solution in the sense of
			distributions to Problem \eqref{ParabolicProblem}. The existence and uniqueness criteria for such solutions have been established in~\cite[Theorem~28]{LPPS} by using an approximation argument.	Due to the properties of the semigroup generated by the fractional Laplacian, this notion coincides with that of a weak solution.
		\hfill\(\Box\)
	\end{remq}

	
		\section{Main results of global existence for Fractional Reaction-Diffusion Systems}
		
	Let us recall System 
		\begin{equation}\label{ReactiondiffusionSystem} \tag{$S$}
	\left\{   \begin{array}{rcll}
		\forall i=1,\ldots,m,\hspace*{2.2cm}&&&\\
		\dis \partial_t u_i(t,\textbf{x})+d_i(-\Delta)^su_i(t,\textbf{x})&=&f_i(u_1(t,\textbf{x}),\ldots,u_m(t,\textbf{x})),&(t,\textbf{x})\in Q_T,\\ \dis
		u_i(t, \textbf{x})&=&0,&(t,\textbf{x})\in (0,T)\times(\R^N\setminus\Omega),\\\dis
		u_i(0,\textbf{x})&=&u_{0i}(\textbf{x}),	&\textbf{x}\in\Omega,
	\end{array}
	\right.
\end{equation}
where $m\geq 2$, $0<s<1$ and for each $i=1,\ldots m$, $d_i>0$, $u_{0i}\in L^\infty(\Omega)$ and 
$f_i$ is locally Lipchitz continuous.
		
\medbreak	
	
\noindent As mentioned in the introduction, our main contribution is to extend Theorem~\ref{Thm-ReversibleChemicalReaction-DiffCL} and Theorem~\ref{Thm-TriangularStructure} to the  fractional case. That is the purpose of this section.  Moreover, for the sake of  clarity, this section will be divided into three subsections. In the first one, we will treat the local existence and extend Pierre's duality lemma to the fractional case. Subsections 4.2 and 4.3 will be devoted respectively to the proofs of our main Theorems~\ref{MainTh1} and~\ref{MainTh2}.\\
%
{ One should not {conclude} this short introduction without mentioning that  once we have established the necessary tools adapted to the fractional Laplacian, the demonstrations are similar to those for the case where the diffusions are governed by the classical Laplacian. However, for the convenience of the reader and for the completeness of the article, we will give {the proofs} in detail.
}


		 

 \subsection{Local existence and extension of Pierre's duality lemma to the fractional case}
{
\noindent$\bullet$ {\bf Local existence.} The local existence lemma  is classical.   Let us recall here its statement. As the proof is straightforward, we omit it.
\begin{lem}\label{LocalExistence}
 	Assume that
 	$\mathbf{u}_0:=(u_{01},\ldots,u_{0m}) \in (L^\infty(\Omega))^m$ , the $f_i$'s  are  locally Lipschitz continuous. Then,
	  there exists $T_{\max}>0$ and 
	$\Phi=(\varphi_1,\ldots,\varphi_m)\in \mathcal{C}([0,T_{\max}),[0, +\infty)^m)$ such that :\\
	{\em (i)} System~\eqref{ReactiondiffusionSystem1} has  a unique nonnegative strong solution $\mathbf{u}=(u_{1},\ldots,u_{m})$ in $(0,T_{\max})\times \Omega$; \\
{\em (ii)}	for each $i\in \{1,\ldots,m\}$
\begin{equation}
 \| u_i(t,\cdot)\|_{L^\infty(\Omega)}\leq \varphi_i(t) \text{ for any } t\in(0,T_{\max});
\end{equation}
{\em (iii)}	if $T_{\max}<+\infty$, then $\dis\lim_{t\nearrow T_{\max}}\dis\sum\limits_{i=1}^m \|u_i(t,\cdot)\|_{L^\infty(\Omega)}=+\infty$;\\
{\em (iv)}	if, in addition,  the $f_i$'s satisfy {\bf (P)}, then 
$$\big(\forall i \in\{1,\ldots,m\},\; u_{0i}(.)\geq 0\big) 	\Longrightarrow \big(\forall i \in\{1,\ldots,m\},\; u_i(t,.)\geq 0 \; \forall t\in [0,T_{\max})\big).$$
\end{lem}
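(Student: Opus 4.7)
The plan is to rely on a classical Banach fixed-point argument combined with the $L^\infty$-contractivity and strong continuity of the submarkovian semigroups $\{S_{A_i}(t)\}_{t\geq 0}$ generated by $A_i:=-d_i(-\Delta)^s$ (assertion~(i) of Theorem~\ref{TheoremSemigroup}). First, I would recast System~\eqref{ReactiondiffusionSystem} as the coupled integral system
\begin{equation*}
u_i(t,\mathbf{x}) = S_{A_i}(t)u_{0i}(\mathbf{x}) + \int_0^t S_{A_i}(t-\tau) f_i\bigl(\mathbf{u}(\tau,\mathbf{x})\bigr)\, d\tau, \qquad i=1,\ldots,m.
\end{equation*}
Fix $R>\max_i\|u_{0i}\|_{L^\infty(\Omega)}$ and let $L_R$ denote the Lipschitz constant of $\mathbf{f}=(f_1,\ldots,f_m)$ on the ball of radius $R$ of $\R^m$. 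On the complete metric space
\begin{equation*}
X_{T,R}:=\Bigl\{\mathbf{v}\in\mathcal{C}\bigl([0,T];(L^\infty(\Omega))^m\bigr)\,:\,\max_i\|v_i(t,\cdot)\|_{L^\infty(\Omega)}\leq R \ \forall\,t\in[0,T]\Bigr\},
\end{equation*}
contractivity of $S_{A_i}$ on $L^\infty(\Omega)$ shows that for $T$ small enough (depending on $R$, $\|\mathbf{u}_0\|_{L^\infty}$ and $L_R$) the map sending $\mathbf{u}$ to the right-hand side above preserves $X_{T,R}$ and is a strict contraction. Banach's theorem yields a unique local weak (hence strong, by Theorem~\ref{TheoremSemigroup} and the definition~\eqref{DA}) solution.

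Next, I would extend the solution to a maximal time $T_{\max}>0$ by the usual gluing procedure. For~(ii), the same $L^\infty$ estimate applied to the integral equation gives
\begin{equation*}
\|u_i(t,\cdot)\|_{L^\infty(\Omega)}\leq \|u_{0i}\|_{L^\infty(\Omega)} + \int_0^t \bigl\|f_i(\mathbf{u}(\tau,\cdot))\bigr\|_{L^\infty(\Omega)}\, d\tau,
\end{equation*}
and a componentwise comparison with the associated majorizing ODE system
$\dot\varphi_i=\tilde F_i(\varphi_1,\ldots,\varphi_m)$, $\varphi_i(0)=\|u_{0i}\|_{L^\infty}$, where $\tilde F_i$ is any continuous nondecreasing upper bound for $|f_i|$ on $[0,\varphi]^m$, produces the nonnegative continuous envelopes $\varphi_i$. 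The standard blow-up alternative~(iii) follows from the fixed-point construction: were $T_{\max}<+\infty$ and $\sum_i\|u_i(t,\cdot)\|_{L^\infty}$ bounded on $[0,T_{\max})$, one could restart the contraction argument at a time close to $T_{\max}$ and strictly extend the solution, a contradiction.

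For the nonnegativity assertion~(iv), assume the quasi-positivity property~\textbf{(P)} and $u_{0i}\geq 0$ for all $i$. Following the classical scheme, introduce the truncated nonlinearities $\tilde f_i(\mathbf{r}):=f_i(r_1^+,\ldots,r_{i-1}^+,0,r_{i+1}^+,\ldots,r_m^+)\geq 0$ (using~\textbf{(P)}) and rewrite the $i$-th equation in the form
\begin{equation*}
\partial_t u_i + d_i(-\Delta)^s u_i = \tilde f_i(\mathbf{u}^+) + g_i(t,\mathbf{x})\, u_i,
\end{equation*}
where $g_i$ is bounded on every $[0,T']\times\Omega$ with $T'<T_{\max}$ thanks to the local Lipschitz property of $f_i$ and the bound~(ii). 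Setting $v_i:=e^{-\lambda t}u_i$ with $\lambda>\|g_i\|_\infty$ absorbs the zeroth-order term and yields an equation whose right-hand side is nonnegative; the comparison principle of Theorem~\ref{maximumprinciple} (applied to $-v_i$) then forces $u_i\geq 0$.

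The main difficulty is step~(iv). In the classical case $s=1$ the nonnegativity is obtained by multiplying by $u_i^-$ and integrating by parts, which produces a manifestly nonnegative Dirichlet energy $\int|\nabla u_i^-|^2$. In the fractional setting the analogous Stroock--Varopoulos-type inequality $\langle (-\Delta)^s u_i,\, -u_i^-\rangle\geq \|(-\Delta)^{s/2}u_i^-\|_{L^2}^2$ is precisely what is encoded in the submarkovian property of $\{S_{A_i}(t)\}$ already used to prove Theorems~\ref{BoundedSol} and~\ref{maximumprinciple}; everything else is a routine transposition of the Laplacian argument.
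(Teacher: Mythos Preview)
The paper omits the proof entirely, stating only that the lemma ``is classical'' and ``the proof is straightforward.'' Your sketch is precisely the classical argument the authors have in mind: Banach fixed point in $\mathcal{C}([0,T];(L^\infty(\Omega))^m)$ using the $L^\infty$-contractivity of the submarkovian semigroups, extension to a maximal interval, comparison with a majorizing ODE for~(ii), and the blow-up alternative for~(iii). So in spirit and substance your approach matches what the paper intends.

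One imprecision in~(iv) is worth tightening. The identity
\[
f_i(\mathbf{u}) \;=\; \tilde f_i(\mathbf{u}^+) + g_i(t,\mathbf{x})\,u_i
\]
does not hold for the \emph{original} solution $\mathbf{u}$ unless you already know $u_j\geq 0$ for $j\neq i$; as written the argument is circular. The standard fix---which you seem to allude to with ``truncated nonlinearities''---is to first solve the auxiliary system with $f_i$ replaced by $\hat f_i(\mathbf{r}):=f_i(r_1^+,\ldots,r_m^+)$. For \emph{that} system the decomposition
\[
\hat f_i(\mathbf{u}) = \underbrace{f_i(u_1^+,\ldots,u_{i-1}^+,0,u_{i+1}^+,\ldots,u_m^+)}_{\geq 0\ \text{by \textbf{(P)}}} + g_i(t,\mathbf{x})\,u_i^+
\]
is legitimate, the comparison argument (or multiplication by $u_i^-$ together with the submarkovian inequality you mention) gives $u_i\geq 0$, and then $\hat f_i(\mathbf{u})=f_i(\mathbf{u})$, so the auxiliary solution solves the original system; uniqueness finishes the proof. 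With this correction, your proof is complete.
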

%
%
\begin{remq}
 According to {\em (iii)}, in order  to prove global existence of strong solutions to System~\eqref{ReactiondiffusionSystem1}, it is sufficient to prove an a priori  {estimate} of the form 
\begin{equation}\label{crucial-estimate}
 \forall t\in [0,T_{\max}),\; \dis\sum\limits_{i=1}^m \|u_i(t,\cdot) \|_{L^\infty(\Omega)} <\psi(t),  
 \end{equation}
 where $\psi :[0,+\infty)\rightarrow [0,+\infty)$ is a continuous function.
\end{remq}
}

 \medbreak
{
\noindent $\bullet$ {\bf Proof of global existence in a specific case} 
}
 
 It turns out that an estimate like  (\ref{crucial-estimate}) is far from being obvious for our System except in the trivial case where  $d_1=\ldots=d_m=d$ and the $f_i$'s satisfy {\bf (P)}.
\\
Then, suppose that for some $(a_1,\ldots a_m)\in (0,+\infty)^m$ such that 	
\begin{equation}\label{M-et-Mprime}
	\forall \mathbf{r} \in[0,+\infty)^m,\; \sum\limits_{i=1}^m a_i f_i(\mathbf{r}) \leq C\Big[1+\sum\limits_{i=1}^m r_i\Big], \quad\text{where } C\geq 0.
\end{equation}
 Let $T\in(0,T_{\max})$, $t\in (0,T]$ and $\mathbf{x}\in\Omega$. We denote $W(t,\mathbf{x}):=\dis\sum\limits_{i=1}^m a_i u_i(t,\mathbf{x})$ and $\textcolor{black}{W_0(\mathbf{x})}:=W(0,\mathbf{x})$.
 Let us multiply each $i$th equation of \eqref{ReactiondiffusionSystem} by $a_i$ and sum over $i$. Thus, we get
 \begin{equation}\label{Problem2}
\partial_t W(t,\mathbf{x})+d(-\Delta)^s W(t,\mathbf{x})=\dis\sum\limits_{i=1}^m a_i f_i(u_1(t,\mathbf{x}),\ldots,u_m(t,\mathbf{x})).
 \end{equation} 
 \smallbreak
 --- If $C=0$ in \eqref{M-et-Mprime} ({$i.e.$} we have the assumption {\bf (M)}), we deduce from  Theorem \ref{maximumprinciple} that:
 $$\|W\|_{L^\infty(Q_T)}\leq \|W_0\|_{L^\infty(\Omega)}$$
 which yields, thanks the nonnegativity of the $u_i$'s, to 
 \begin{equation}\label{Majoration-in-QT}
 \sum_{i=1}^m\|u_i\|_{L^\infty(Q_T)}\leq \frac{1}{a_{i_0}}\|W_0\|_{L^\infty(\Omega)} \text{ where } a_{i_0}:=\min\{a_1,\cdots,a_m\}.
 \end{equation}
 
 Given that \eqref{Majoration-in-QT} implies \eqref{crucial-estimate}, this concludes the proof in this case.
 \smallbreak
 --- If $C>0$ in \eqref{M-et-Mprime} ({i.e.} we have the assumption {\bf (M')}), introduce 
 $w$ the weak solution to
 \begin{equation}\label{Problem1}
 	\left\{  
 	\begin{array}{rcll}
 		\partial_t w(t,\mathbf{x})+d (-\Delta)^s w(t,\mathbf{x}) &=& C \Big[1+\dis\sum\limits_{i=1}^m  u_i(t,\mathbf{x})\Big], &(t,\mathbf{x})\in Q_T,\\
 		w(t,\mathbf{x})&=&0, &(t,\mathbf{x})\in (0,T)\times(\R^N\setminus\Omega),\\
 		w(0,\mathbf{x})&=&0,& \mathbf{x}\in\Omega.
 	\end{array}
 	\right.
 \end{equation}	
 Then, $w$ satisfies 
 \begin{equation}\label{MildSolw}
 	w(t,\mathbf{x})=C\dis\int_0^t S_i(t-\tau)\Big[1+\dis\sum\limits_{i=1}^m  u_i(\tau,\mathbf{x})\Big]d\tau,\quad (t,\mathbf{x})\in [0,T)\times\Omega.
 \end{equation}
Let us multiply each $i$th equation of \eqref{ReactiondiffusionSystem} by $a_i$ and sum over $i$. Thus, we get
 \begin{equation}\label{Problem2}
\partial_t W(t,\mathbf{x})+d(-\Delta)^s W(t,\mathbf{x})=\dis\sum\limits_{i=1}^m a_i f_i(u_1(t,\mathbf{x}),\ldots,u_m(t,\mathbf{x})).
 \end{equation}
By substracting the first equation of \eqref{Problem1} from \eqref{Problem2}, we obtain 
\begin{equation}\label{Problem3}
	\partial_t (W-w)(t,\mathbf{x})+d(-\Delta)^s (W-w)(t,\mathbf{x})\leq 0.
\end{equation}
	By Theorem \ref{maximumprinciple} , we get	
\begin{equation}\label{Problem4}
	\|W-w\|_{L^\infty(Q_T)}\leq \|\textcolor{black}{W_0}\|_{L^\infty(\Omega)}.
\end{equation}
Therefore,
   \begin{equation}\label{Problem5}
   	\|W\|_{L^\infty(Q_T)}\leq \|\textcolor{black}{W_0}\|_{L^\infty(\Omega)} + \|w\|_{L^\infty(Q_T)}.
   \end{equation}
By taking the $L^\infty(\Omega)$-norm of \eqref{MildSolw} and using the $L^\infty$-contractivity of $\{S_A(t)\}_{t\geq0}$, we obtain
$$
\textcolor{black}{a_{i_0}}\Big\|\dis\sum\limits_{i=1}^m  u_i(t,\cdot) \Big\|_{L^\infty(\Omega)} \leq  \Big\|\dis\sum\limits_{i=1}^m a_i u_{0i}\Big\|_{L^\infty(\Omega)} + C \int_0^t \Big[1+\Big\|\dis\sum\limits_{i=1}^m  u_i(\tau,\cdot) \Big\|_{L^\infty(\Omega)} \Big]d\tau .
$$	
Now, by applying { Gronwall's Lemma, we deduce that $$	\Big\|\sum\limits_{i=1}^m	u_i(t,\cdot)\Big\|_{L^\infty(\Omega)}\leq g(t),$$ where $g:[0,+\infty)\to [0,+\infty)$ is a continuous function. As {\bf (P)} assures us the nonnegativity of $u_i$ for any $i\in \{1,\cdots,m\}$, then $u_i(t,\cdot)$ is bounded in $L^\infty(\Omega)$ $\forall t\in[0,T_{\max})$.} This implies that $T_{\max}=+\infty$.$\square$

\medbreak
{
At this point,  the following  remark is in order. 
\begin{remq}
As we have just proved in the case $d_1=\cdots=d_m$, System (S) admits a global strong solution \textbf{whatever the growth} of the $f_i$'s. However,  the situation is quite more complicated if the diffusion coefficients are different from each other. Some additional assumptions on  the growth  of the source terms are needed if one expects global existence of strong  solutions even if $s=1$ as already specified in the introduction.
\end{remq}
}
\medbreak

\noindent$\bullet$ {\bf Extension of Pierre's duality lemma to the fractional case}

\smallbreak

\noindent 
 One of the main tools that we will use to establish our global existence theorems is the following lemma. 
This is nothing other than an extension of Pierre's duality lemma to the fractional case.
	\begin{lem}\label{FractionalDualityThm}
	Given $T>0$. Assume that $m=2$, $f_1$ and $f_2$ satisfy $(\mathbf{P})$ and $(\mathbf{M})$. Let $(u_1,u_2)$ be the strong solution to~\eqref{ReactiondiffusionSystem} in $Q_T$. Then, there exists a constant $C=C(p, T, \Omega)> 0$ such that
		$$
		\|u_2\|_{L^p\left(Q_T\right)} \leq C\left(1+\|u_1\|_{L^p\left(Q_T\right)}\right)\quad \forall 1<p<+\infty.
		$$
	\end{lem}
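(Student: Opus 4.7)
\medbreak

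\textbf{Proof proposal.} The plan is to mimic the classical Pierre duality argument, but using the fractional dual problem $(P_\varphi)$ and the corresponding maximal regularity estimate of Theorem~\ref{DualProblemIneq}. Fix $p\in(1,+\infty)$, let $p'$ be its conjugate exponent, and take an arbitrary nonnegative $\varphi\in L^{p'}(Q_T)$. First I would introduce the weak solution $\mathcal{Z}$ of the backward dual problem $(P_\varphi)$ with diffusion coefficient $d_2$. By Theorem~\ref{DualProblemIneq}, $\mathcal{Z}$ exists, is unique, and satisfies
\[
\|\mathcal{Z}_0\|_{L^{p'}(\Omega)}+\|(-\Delta)^s\mathcal{Z}\|_{L^{p'}(Q_T)}+\|\partial_t\mathcal{Z}\|_{L^{p'}(Q_T)}\le C(p',T)\,\|\varphi\|_{L^{p'}(Q_T)}.
\]
Moreover, because the change of variable $\tau=T-t$ reduces $(P_\varphi)$ to a forward equation with zero initial data and nonnegative source $\varphi$, and since $\{S_A(t)\}_{t\ge0}$ is submarkovian, one has $\mathcal{Z}\ge0$ on $Q_T$.

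Next I would use $\mathcal{Z}$ as a test function in the equation for $u_2$. By (M), $a_2 f_2\le -a_1 f_1$, hence, after multiplying the equation for $u_2$ by $\mathcal{Z}$, integrating over $Q_T$, and performing the time-integration by parts (using $\mathcal{Z}(T,\cdot)=0$), one gets
\[
\iint_{Q_T}u_2\varphi\,d\mathbf{x}\,dt\le \int_{\Omega}u_{02}\mathcal{Z}_0\,d\mathbf{x}-\frac{a_1}{a_2}\iint_{Q_T}f_1\mathcal{Z}\,d\mathbf{x}\,dt.
\]
Then I would replace $f_1$ by $\partial_t u_1+d_1(-\Delta)^s u_1$ (strong solution), integrate by parts in time for the term with $\partial_t u_1$, and use the symmetry of the fractional Laplacian on $\mathbb{H}_0^s(\Omega)$, namely $\iint u_1(-\Delta)^s\mathcal{Z}=\iint \mathcal{Z}(-\Delta)^s u_1$, for the diffusion term. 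Substituting $\partial_t\mathcal{Z}=d_2(-\Delta)^s\mathcal{Z}-\varphi$ and collecting terms, the expected identity is
\[
\iint_{Q_T}\Big(u_2+\tfrac{a_1}{a_2}u_1\Big)\varphi\,d\mathbf{x}\,dt\le \int_{\Omega}\Big(u_{02}+\tfrac{a_1}{a_2}u_{01}\Big)\mathcal{Z}_0\,d\mathbf{x}+\frac{a_1}{a_2}(d_2-d_1)\iint_{Q_T}u_1(-\Delta)^s\mathcal{Z}\,d\mathbf{x}\,dt.
\]

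To conclude, I would apply Hölder's inequality to each term on the right, using the Lamberton-type bounds of Theorem~\ref{DualProblemIneq}:
\[
\iint_{Q_T}u_2\varphi\le \iint_{Q_T}\Big(u_2+\tfrac{a_1}{a_2}u_1\Big)\varphi\le C\Big(\|u_{01}\|_{L^p}+\|u_{02}\|_{L^p}+|d_1-d_2|\|u_1\|_{L^p(Q_T)}\Big)\|\varphi\|_{L^{p'}(Q_T)}.
\]
Since $u_{01},u_{02}\in L^\infty(\Omega)\subset L^p(\Omega)$, taking the supremum over nonnegative $\varphi\in L^{p'}(Q_T)$ with $\|\varphi\|_{L^{p'}}\le1$ yields, by the characterization of the $L^p$-norm of the nonnegative function $u_2$ via duality, the desired estimate $\|u_2\|_{L^p(Q_T)}\le C(p,T,\Omega)(1+\|u_1\|_{L^p(Q_T)})$.

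The main obstacle I anticipate is the rigorous justification of the fractional integration-by-parts identity $\iint u_1(-\Delta)^s\mathcal{Z}=\iint \mathcal{Z}(-\Delta)^s u_1$ within the $L^p$-$L^{p'}$ duality framework. For smooth compactly supported test functions this is a standard consequence of the singular-integral definition of $(-\Delta)^s$, but here $u_1$ and $\mathcal{Z}$ are only weak solutions with $(-\Delta)^s u_1\in L^p$ and $(-\Delta)^s\mathcal{Z}\in L^{p'}$, both vanishing outside $\Omega$. I would circumvent this by first proving the identity for smooth approximations (obtained, e.g., by convolving the forcing and the initial data with mollifiers) and then passing to the limit using the Lamberton estimate \eqref{InegalitedeLambertonLP} together with the continuity of the semigroup, which ensures convergence in the relevant Lebesgue spaces.
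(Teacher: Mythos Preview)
Your proof is correct and follows essentially the same Pierre duality strategy as the paper: introduce the solution $\mathcal{Z}\ge0$ of the backward problem $(P_\varphi)$ with $d=d_2$, test the equations, and conclude via H\"older together with the Lamberton-type estimate of Theorem~\ref{DualProblemIneq}. The only cosmetic difference is organizational---the paper first sums the two equations and then tests with $\mathcal{Z}$, keeping the right-hand side as $\iint u_1\big(\partial_t\mathcal{Z}-d_1(-\Delta)^s\mathcal{Z}\big)$, whereas you test the $u_2$-equation, substitute $f_1=\partial_t u_1+d_1(-\Delta)^su_1$, and further replace $\partial_t\mathcal{Z}$ to obtain the $(d_2-d_1)$ factor; both routes yield the same bound.
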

	\noindent Pierre's duality Lemma was first  stated in \cite{HolMarPie} and then widely exploited, see for example
	\cite{LaamThese,LaamActa2011, FellLaam, CanDesFel, PierreSurvey}  and the references therein. 
	
	\bigbreak
	
\begin{proof} Without loss of generality, we will assume that $a_1=a_2=1$.\\
	 Let $T\in(0,T_{\max})$ and $t\in (0,T]$. By summing the two equations of $u_1$ and $u_2$, we obtain
	 \begin{equation}\label{inequ_1+u_2}
	 \partial_t(u_1(t,\mathbf{x})+u_2(t,\mathbf{x})) + (-\Delta)^s(d_1u_1(t,\mathbf{x})+d_2u_2(t,\mathbf{x}))\leq 0,\quad (t,\mathbf{x})\in Q_T.
	 \end{equation}
 	Let $\varphi$ be a nonnegative regular function and let $\mathcal{Z}$ be the solution to Problem \eqref{DualProblem} with $d=d_2$.
	 Now, we multiply \eqref{inequ_1+u_2} by $\mathcal{Z}$ and integrate over $Q_T$. We get 
	 \begin{equation}
	 	\left\{
	 	\begin{array}{ll}
	 		\dis
	 	-\iint_{Q_T} (u_1(t,\mathbf{x})+u_2(t,\mathbf{x})) \partial_t\mathcal{Z}(t,\mathbf{x}) d\mathbf{x}dt+d_2 \iint_{Q_T} u_2(t,\mathbf{x}) (-\Delta)^s\mathcal{Z}(t,\mathbf{x}) d\mathbf{x}dt \\\\\dis \leq
	 	\int_\Omega (u_{01}-u_{02})(\mathbf{x}) \mathcal{Z}_0(\mathbf{x})d\mathbf{x}-d_1 \iint_{Q_T} u_1(t,\mathbf{x}) (-\Delta)^s\mathcal{Z}(t,\mathbf{x}) d\mathbf{x}dt .
 	\end{array}
 	\right.
	 \end{equation}	 
	 Then,
	  \begin{equation}
	 	\left\{
	 	\begin{array}{ll}
	 		\dis
	 		\iint_{Q_T} u_2(t,\mathbf{x}) \big[ -\partial_t\mathcal{Z}(t,\mathbf{x})+d_2 (-\Delta)^s\mathcal{Z}(t,\mathbf{x})  \big] d\mathbf{x}dt - \iint_{Q_T}
	 		u_1(t,\mathbf{x}) \partial_t\mathcal{Z}(t,\mathbf{x})  d\mathbf{x}dt \\\\\dis \leq
	 		\int_\Omega (u_{01}-u_{02})(\mathbf{x}) \mathcal{Z}_0(\mathbf{x})d\mathbf{x}-d_1 \iint_{Q_T} u_1(t,\mathbf{x}) (-\Delta)^s\mathcal{Z}(t,\mathbf{x}) d\mathbf{x}dt .
	 	\end{array}
	 	\right.
	 \end{equation}	 
	 Therefore,
	 \begin{equation}
	 	\iint_{Q_T} u_2(t,\mathbf{x}) \varphi(t,\mathbf{x}) d\mathbf{x}dt \leq \int_\Omega (u_{01}-u_{02})(\mathbf{x}) \mathcal{Z}_0(\mathbf{x})d\mathbf{x} +  \iint_{Q_T}
	 	u_1(t,\mathbf{x})\big[ \partial_t\mathcal{Z}(t,\mathbf{x})-d_1(-\Delta)^s\mathcal{Z}(t,\mathbf{x})  \big] d\mathbf{x}dt .
	 \end{equation}
 Let $p>1$. By using H\"{o}lder's inequality and the estimate \eqref{DualityInequality},  we obtain
  \begin{equation}
 	\iint_{Q_T} u_2(t,\mathbf{x}) \varphi(t,\mathbf{x}) d\mathbf{x}dt \leq C\big[ \|u_{01}-u_{02}\|_{L^p(\Omega)} \|\mathcal{Z}_0\|_{L^{p^\prime}(\Omega)} +  (1+d_1) 
 	\|u_1\|_{L^p(Q_T)} 	\|\varphi\|_{L^{p^\prime}(Q_T)}\big],
 \end{equation}
where $p^\prime=\dfrac{p}{p-1}$ and $C:=C(p,T,\Omega)>0$. As $\varphi$ is regular, we deduce by duality that
$$
\|u_2\|_{L^p\left(Q_T\right)} \leq C\left(\left\|u_{01}+u_{02}\right\|_{L^p(\Omega)}+\left(1+d_1\right)\|u_1\|_{L^p\left(Q_T\right)}\right).
$$
Hence,
$$
\|u_2\|_{L^p\left(Q_T\right)} \leq C \max \left\{\left\|u_{01}+u_{02}\right\|_{L^p(\Omega)},1+d_1\right\}\left(1+\|u_1\|_{L^p\left(Q_T\right)}\right) \text {, }
$$
which completes the proof.
\end{proof}

	
	\subsection{ First result of global existence : case of reversible chemical reaction}\label{reversible-chemical-reaction}
%
		Introducing the fractional counterpart of System~\eqref{SystemChemReac-DiffCL}, namely:

		\begin{equation}\label{SystemChemReac-DiffFR} 
			\left\{
			\begin{array}{rclll}\partial_t u_1(t,\textbf{x})+d_1 (-\Delta)^s u_1(t,\textbf{x})&=&	f_1(u_1(t,\textbf{x}),u_2(t,\textbf{x}),u_3(t,\textbf{x}))	, & (t,\textbf{x})\in Q_T, \\ \partial_t u_2(t,\textbf{x})+d_2 (-\Delta)^s u_2(t,\textbf{x})&=&	f_2(u_1(t,\textbf{x}),u_2(t,\textbf{x}),u_3(t,\textbf{x}))	, & (t,\textbf{x})\in Q_T, \\ \partial_t u_3(t,\textbf{x})+d_3 (-\Delta)^s u_3(t,\textbf{x})&=&	f_3(u_1(t,\textbf{x}),u_2(t,\textbf{x}),u_3(t,\textbf{x}))	, & (t,\textbf{x})\in Q_T, \\ u_1(t,\textbf{x})=u_2(t,\textbf{x})=u_3(t,\textbf{x})&=&0,&(t,\textbf{x})\in (0,T)\times(\R^N\setminus\Omega), \\ u_1(0,\textbf{x})&=&u_{01}(\textbf{x}), & \textbf{x} \in \Omega, \\ u_2(0, \textbf{x})&=&u_{02}(\textbf{x}), & \textbf{x} \in \Omega, \\ u_3(0, \textbf{x})&=&u_{03}(\textbf{x}),  & \textbf{x} \in \Omega,\end{array}\right.
		\end{equation}

\noindent where $0<s<1$, $f_1= \alpha_1g,\; f_2= \alpha_2g,\; f_3= -\alpha_3g \text{ with } g=u_3^{\alpha_3}-u_1^{\alpha_1}u_2^{\alpha_2} \text{ and } \alpha_1, \alpha_2, \alpha_3 \geq 1$. 
		
		%
%
\medbreak

\noindent Our {first} result of global existence reads as

\begin{thm}  \label{MainTh1}
Assume that
			$(u_{01},u_{02},u_{03}) \in (L^\infty(\Omega)^+)^3$. System~\eqref{SystemChemReac-DiffFR} admits a unique nonnegative global strong solution in the following cases: 
			\\
			{\em (i)}  $\alpha_1+\alpha_2<\alpha_3$ ;\\
			{\em (ii)} $ \alpha_3=1$ whatever are  $\alpha_1$ and $\alpha_2$ ; \\
			{\em (iii)} $d_1=d_3$ or $d_2=d_3$ whatever are $\alpha_1$,$\alpha_2$ and $\alpha_3$ ;	\\		
		{\em (iv)} $d_1=d_2\neq d_3$  for any $(\alpha_1,\alpha_2,\alpha_3)\in [1,+\infty)^2\times (1,+\infty)$ such that $\alpha_1+\alpha_2\neq\alpha_3$.			
	\end{thm}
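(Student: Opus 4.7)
The plan is to invoke the local existence Lemma~\ref{LocalExistence} and, in each of the four cases, establish an a priori $L^\infty$ bound on $\sum_{i=1}^3 \|u_i(t,\cdot)\|_{L^\infty(\Omega)}$ on $[0,T_{\max})$. By the blow-up alternative (iii) of Lemma~\ref{LocalExistence}, this forces $T_{\max}=+\infty$. By Theorem~\ref{BoundedSol}, it will actually suffice to bound each $f_i$ in $L^p(Q_T)$ for some $p>(N+2s)/(2s)$ uniformly in $T<T_{\max}$. Throughout I will exploit quasi-positivity \textbf{(P)} (which guarantees $u_i\geq 0$) and mass control \textbf{(M)}, namely $\alpha_2\alpha_3 f_1+\alpha_1\alpha_3 f_2+2\alpha_1\alpha_2 f_3=0$, together with the two algebraic identities $\alpha_3 f_1+\alpha_1 f_3=0$ and $\alpha_3 f_2+\alpha_2 f_3=0$ that follow from $f_i=\alpha_i g$ with appropriate signs.

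Case (iii) is the most direct. If $d_1=d_3=:d$, set $w:=\alpha_3 u_1+\alpha_1 u_3$. Adding $\alpha_3$ times the first equation to $\alpha_1$ times the third, the reactive terms cancel and $w$ solves $\partial_t w+d(-\Delta)^s w=0$ with zero exterior data. The comparison principle (Theorem~\ref{maximumprinciple}) then gives $\|w\|_{L^\infty(Q_T)}\leq\|w(0,\cdot)\|_{L^\infty(\Omega)}$, and because $u_1,u_3\geq 0$ both are bounded. The remaining equation has source $f_2=\alpha_2(u_3^{\alpha_3}-u_1^{\alpha_1}u_2^{\alpha_2})\leq \alpha_2\|u_3\|_{L^\infty}^{\alpha_3}=:K$, so a further application of Theorem~\ref{maximumprinciple} to $u_2-v$ (where $v$ solves the linear problem with constant source $K$ and initial datum $u_{02}$) bounds $u_2$. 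The case $d_2=d_3$ is symmetric with $w:=\alpha_3 u_2+\alpha_2 u_3$.

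For cases (i), (ii), (iv) I would run a bootstrap of $L^p$ estimates. The starting point in each case is the $L^1$ bound on $\alpha_2\alpha_3 u_1+\alpha_1\alpha_3 u_2+2\alpha_1\alpha_2 u_3$ obtained from \textbf{(M)} by integrating over $Q_T$ (which gives all three $u_i$ in $L^\infty(0,T;L^1(\Omega))$). The fractional Pierre duality Lemma~\ref{FractionalDualityThm}, applied to the reduced $2\times 2$ pairs $(u_1,u_3)$ and $(u_2,u_3)$, which satisfy \textbf{(P)+(M)} thanks to the identities $\alpha_3 f_i+\alpha_i f_3=0$ for $i=1,2$, transfers any $L^p$ bound on $u_1$ (resp.\ $u_2$) into an $L^p$ bound on $u_3$. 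Then the fractional Lamberton-type estimate (Theorem~\ref{DualProblemIneq} in dual form, together with \eqref{InegalitedeLambertonLP}) applied to each equation converts an $L^p$ bound on $g\leq u_3^{\alpha_3}$ into an $L^p$ bound on $(-\Delta)^s u_i$ and hence on $u_i$ itself. In case (i) the strict inequality $\alpha_1+\alpha_2<\alpha_3$ furnishes a strictly improving exponent at each iteration; in case (ii) the same role is played by the \emph{linear} dissipative term $-u_3$ in the $u_3$-equation (since $\alpha_3=1$ implies $f_3=-u_3+u_1^{\alpha_1}u_2^{\alpha_2}$), which gives coercivity and allows a direct estimate of $u_3$ in terms of $u_1^{\alpha_1}u_2^{\alpha_2}$. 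In either subcase the iteration reaches some $p>(N+2s)/(2s)$ in finitely many steps, and Theorem~\ref{BoundedSol} concludes.

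The main obstacle is case (iv), $d_1=d_2\neq d_3$, because now the mass control together with Pierre's lemma only controls pairs with mismatched diffusions. The additional input is the observation that, since $\alpha_2 f_1-\alpha_1 f_2=0$ and $d_1=d_2$, the combination $v:=\alpha_2 u_1-\alpha_1 u_2$ solves the homogeneous fractional heat equation, so by Theorem~\ref{maximumprinciple}
\[
\|\alpha_2 u_1-\alpha_1 u_2\|_{L^\infty(Q_T)}\leq\|\alpha_2 u_{01}-\alpha_1 u_{02}\|_{L^\infty(\Omega)}.
\]
This pointwise proportionality collapses the analysis to a single pair $(u_1,u_3)$ (with $u_2=(\alpha_1/\alpha_2)u_1+O(1)$), to which Pierre's duality Lemma~\ref{FractionalDualityThm} and the Lamberton estimate~\eqref{InegalitedeLambertonLP} can be applied in cascade: the assumption $\alpha_1+\alpha_2\neq\alpha_3$ ensures that the iteration exponents do not stall at a fixed point, and the argument terminates again by reaching some $p>(N+2s)/(2s)$ and appealing to Theorem~\ref{BoundedSol}. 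Uniqueness in all four cases follows from the local Lipschitz character of the $f_i$'s and a Gronwall argument on the difference of two solutions in their common $L^\infty$ ball, using the contractivity of $\{S_A(t)\}_{t\geq 0}$ on $L^\infty$.
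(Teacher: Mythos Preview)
Your handling of cases (iii) and (iv) is essentially what the paper has in mind (it leaves these to the reader): in (iii) the sum $\alpha_3 u_1+\alpha_1 u_3$ solves a homogeneous fractional heat equation, and in (iv) the difference $\alpha_2 u_1-\alpha_1 u_2$ does, so these reductions are fine.

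The gap is in case (i), and to a lesser extent (ii). Your proposed ``bootstrap of $L^p$ estimates'' is not a working argument as written. You assert that Lamberton's estimate turns an $L^p$ bound on $g$ into an $L^p$ bound on $(-\Delta)^s u_i$ ``and hence on $u_i$ itself'', and that $\alpha_1+\alpha_2<\alpha_3$ then gives a strictly improving exponent. But Lamberton only returns $(-\Delta)^s u_i\in L^p$ with the \emph{same} $p$; to gain integrability you would need a parabolic Sobolev embedding, which the paper does not develop, and even then the worst term in $|g|$ is $u_3^{\alpha_3}$, so the iteration $r\mapsto r/\alpha_3\mapsto q$ does not visibly improve without further input. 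Nothing in your sketch explains where the gain actually comes from.

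The paper's mechanism in case (i) is different and more direct: multiply the third equation by $u_3^{\,q}$ and integrate. The reaction produces the \emph{good} term $\alpha_3\iint_{Q_T} u_3^{\,q+\alpha_3}$ on the left, while the fractional diffusion is handled by the Stroock--Varopoulos inequality $\iint [(-\Delta)^s u_3]\,u_3^{\,q}\geq 0$. On the right one has $\alpha_3\iint u_1^{\alpha_1}u_2^{\alpha_2}u_3^{\,q}$, which by H\"older and Pierre's duality (in the direction $\|u_i\|_{L^{q+\alpha_3}}\leq C(1+\|u_3\|_{L^{q+\alpha_3}})$, $i=1,2$) is bounded by $C\|u_3\|_{L^{q+\alpha_3}}^{\,q+\alpha_1+\alpha_2}$. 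Because $q+\alpha_1+\alpha_2<q+\alpha_3$, Young's inequality absorbs this into the left-hand side and yields $\|u_3\|_{L^{q+\alpha_3}(Q_T)}\leq C$ for \emph{arbitrary} $q$, hence for some $p=(q+\alpha_3)/\alpha_3>(N+2s)/(2s)$, and Theorem~\ref{BoundedSol} finishes. The strict inequality is used exactly once, in the absorption step, not in any iteration.

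For case (ii) the paper also does not rely on the coercive $-u_3$ term you invoke. Instead, from $f_1\leq \alpha_1 u_3$ and $L^p$-contractivity one gets $\|u_1(t)\|_{L^p(\Omega)}\leq \|u_{01}\|_{L^p}+\alpha_1\int_0^t\|u_3(\tau)\|_{L^p}\,d\tau$; Pierre's duality bounds $\int_0^t\|u_3\|_{L^p}$ back by $\|u_1\|_{L^p(Q_t)}$, and Gronwall closes this in one step for every $p$. Your coercivity idea would still need a separate argument to control $u_1^{\alpha_1}u_2^{\alpha_2}$, which you do not supply.
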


Before proceeding to the proof of the previous theorem, it should be mentioned that it is similar to those of \cite[Theorems 1 and 3]{LaamActa2011}.

		\begin{proof}
			Let $T\in(0,T_{\max})$ and $t\in (0,T]$. Moreover, let us denote by $C$ a positive constant which may be different from line to another.

			\medbreak
			\noindent{\bf (i) The case $\alpha_1+\alpha_2<\alpha_3$.}\\
			Let us begin by noting  that $f_3$ and $f_1$ (resp. $f_2$) satisfy {\bf (M)}. In fact, $\alpha_3 f_1+ \alpha_1 f_3=0$ and $\alpha_3 f_2+ \alpha_2 f_3=0$. Therefore, we  deduce from Lemma \ref{FractionalDualityThm} that for any $ 1<p<+\infty$
					\begin{equation}\label{u3-controle-u1}
				\|u_1\|_{L^p\left(Q_T\right)} \leq C\left(1+\|u_3\|_{L^p\left(Q_T\right)}\right)
				\end{equation}
		and
		\begin{equation}\label{u3-controle-u2}
		\|u_2\|_{L^p\left(Q_T\right)} \leq C\left(1+\|u_3\|_{L^p\left(Q_T\right)}\right).
		\end{equation}
		Therefore, in order to prove the global existence, it is sufficient to prove that $u_3\in L^p(Q_{T})$ for $p$ large enough. \\
			Let $q>1$. By multiplying the third equation of System~\eqref{SystemChemReac-DiffFR}  by $(u_3(t,\mathbf{x}))^{q}$ and integrating over $Q_T$, we get
			\begin{equation}\label{xuq+intQT}
				\left\{ \begin{array}{ll} \dis
					\frac{1}{q+1} \int_{\Omega} (u_3(T,\mathbf{x}))^{q+1}d\mathbf{x}+ d_3 \iint_{Q_T} \big[(-\Delta)^s u_3 (t,\mathbf{x}) \big] (u_3(t,\mathbf{x}))^{q} d\mathbf{x}dt+\alpha_3\iint_{Q_T} (w(t,\mathbf{x}))^{q+\gamma} d\mathbf{x}dt
					\\\\\dis=\alpha_3\iint_{Q_T} (u_1(t,\mathbf{x}))^{\alpha_1} (u_2(t,\mathbf{x}))^{\alpha_2} (u_3(t,\mathbf{x}))^q  d\mathbf{x}dt+\frac{1}{q+1} \int_{\Omega} (u_{03}(\mathbf{x}))^{q+1} d\mathbf{x}.
				\end{array}\right.
			\end{equation}
			\\
			By using the the so-called Stroock-Varopoulos inequality (see, for instance, \cite[Formula (B7)]{BonFigRosOt}), we have
			\begin{equation}
				\iint_{Q_T} \big[(-\Delta)^s u_3 (t,\mathbf{x}) \big] (u_3(t,\mathbf{x}))^{q} d\mathbf{x}dt \geq \frac{4q}{(q+1)^2}   \iint_{Q_T} \left| (-\Delta)^{\frac{s}{2}} (u_3(t,\mathbf{x}))^{\frac{q+1}{2}}  \right|^2 d\mathbf{x}dt\geq 0.
			\end{equation}
			Furthermore, by using H\"{o}lder's inequality, we have
			\begin{equation}
				\iint_{Q_T} (u_1(t,\mathbf{x}))^{\alpha_1} (u_2(t,\mathbf{x}))^{\alpha_2} (u_3(t,\mathbf{x}))^q  d\mathbf{x}dt \leq
				\|u_1\|_{L^{\alpha_1 \beta}\left(Q_T\right)}^{\alpha_1}\|u_2\|_{L^{\alpha_2 \gamma}\left(Q_T\right)}^{\alpha_2}\|u_3\|_{L^{\alpha_3+q}\left(Q_T\right)}^q,
			\end{equation}
			where
			$$
			\frac{1}{\beta}+\frac{1}{\gamma}+\frac{q}{q+\alpha_3}=1.
			$$
			Since  $\alpha_1+\alpha_2<\alpha_3$,  we can choose $\beta$ such that $\beta \alpha_1 \leq q+\alpha_3$  and  $\gamma$ such that  $\gamma \alpha_2 \leq q+\alpha_3$. Then, $L^{q+\alpha_3}\left(Q_T\right) \subset L^{\alpha_1 \beta}\left(Q_T\right)$   and  $L^{q+\alpha_3}\left(Q_T\right) \subset L^{\alpha_2 \gamma}\left(Q_T\right)$.    
			Consequently, we have
			\begin{equation}\label{Linclusion}
				\iint_{Q_T} (u_1(t,\mathbf{x}))^{\alpha_1} (u_2(t,\mathbf{x}))^{\alpha_2} (u_3(t,\mathbf{x}))^q  d\mathbf{x}dt \leq C
				\|u_1\|_{L^{\alpha_3+q}\left(Q_T\right)}^{\alpha_1}\|u_2\|_{L^{\alpha_3+q}\left(Q_T\right)}^{\alpha_2}\|u_3\|_{L^{\alpha_3+q}\left(Q_T\right)}^q.
			\end{equation}
			By using the same concept of duality as of Lemma \ref{FractionalDualityThm}, we obtain
			\begin{equation}\label{u1norm}
				\|u_1\|_{L^{\alpha_3+q}\left(Q_T\right)} \leq C\left(1+\|u_3\|_{L^{\alpha_3+q}\left(Q_T\right)}\right)
			\end{equation}
			and 
			\begin{equation} \label{u2norm}
				\|u_2\|_{L^{\alpha_3+q}\left(Q_T\right)} \leq C\left(1+\|u_3\|_{L^{\alpha_3+q}\left(Q_T\right)}\right) .
			\end{equation}
			Thanks to \eqref{u1norm} and \eqref{u2norm}, Formula \eqref{Linclusion} implies that
			\begin{equation}\left\{
				\begin{array}{ll}
					\dis
					\iint_{Q_T} (u_1(t,\mathbf{x}))^{\alpha_1} (u_2(t,\mathbf{x}))^{\alpha_2} (u_3(t,\mathbf{x}))^q  d\mathbf{x}dt\\\\\dis\leq C\left(1+\|u_3\|_{L^{\alpha_3+q}\left(Q_T\right)}\right)^{\alpha_1}\left(1+\|u_3\|_{L^{\alpha_3+q}\left(Q_T\right)}\right)^{\alpha_2}\left(1+\|u_3\|_{L^{\alpha_3+q}\left(Q_T\right.}\right)^q .
				\end{array}
				\right.
			\end{equation}
			If $\|u_3\|_{L^{\alpha_3+q}\left(Q_T\right)} \leq 1$ then the proof ends up. Otherwise, 
			\begin{equation}
				\iint_{Q_T} (u_1(t,\mathbf{x}))^{\alpha_1} (u_2(t,\mathbf{x}))^{\alpha_2} (u_3(t,\mathbf{x}))^q  d\mathbf{x}dt \leq C\|u_3\|_{L^{\alpha_3+q}\left(Q_T\right)}^{q+\alpha_1+\alpha_2}.
			\end{equation}
			Hence, we have by \eqref{xuq+intQT} 
			\begin{equation}
				\iint_{Q_T} (u_3(t,\mathbf{x}))^{q+\alpha_3}\leq C\|u_3\|_{L^{\alpha_3+q}\left(Q_T\right)}^{q+\alpha_1+\alpha_2}+\frac{1}{q+1} \int_{\Omega} (u_{03}(\mathbf{x}))^{q+1} d\mathbf{x}.
			\end{equation}
			Therefore, by using Young's inequality, we get 
			\begin{equation}
				\|u_3\|_{L^{\alpha_3+q}\left(Q_T\right)}\leq C.
			\end{equation}
			By choosing $\dfrac{\alpha_3+q}{\alpha_3}>\dfrac{N+2s}{2s}$, we have by Theorem \ref{BoundedSol} 
			\begin{equation}\label{u1bounded}
				\|u_1\|_{L^\infty(Q_T)}\leq C
			\end{equation}
		and 	\begin{equation}\label{u2bounded}
			\|u_2\|_{L^\infty(Q_T)}\leq C.
		\end{equation}
			Now going back to the third equation of~\eqref{SystemChemReac-DiffFR}, we deduce from \eqref{u1bounded} and \eqref{u2bounded} that
			\begin{equation}
				\|u_3\|_{L^{\infty}\left(Q_T\right)}\leq C,
			\end{equation}
			which implies that $T_{\max}=+\infty$.
		\medbreak
		\noindent {\bf (ii) The case $\alpha_3=1$. }
		From the semigroup property and thanks to comparison principle, it holds for any $p>1$
			\begin{equation}\label{inequ_1}
				\|u_1(t,\cdot)\|_{L^p(\Omega)} \leq\left\|u_0\right\|_{L^p(\Omega)}+\int_0^t\|u_3(\tau,\cdot)\|_{L^p(\Omega)} d\tau.
			\end{equation}
			By applying H\"{o}lder's inequality and the same duality arguments as \eqref{u3-controle-u1}, we obtain
			\begin{equation}
				\int_0^t\|u_3(\tau,\cdot)\|_{L^p(\Omega)} d\tau \leq C t^{1 / {p^\prime}}\left[1+\left(\int_0^t \int_{\Omega} (u_1(\tau,\mathbf{x}))^p d\tau d \mathbf{x}\right)^{1 / p}\right],
			\end{equation}
			where $p^{\prime}=\dfrac{p}{p-1}$.
			For any $t \in(0, T]$, let us set $h(t):=\dis\int_{\Omega}|u_1(t, \mathbf{x})|^p d\mathbf{x}$. Then, \eqref{inequ_1} implies that
			\begin{equation}\label{ineqh}
				h(t)^{\frac{1}{p}} \leq C\left[1+\left(\int_0^t h(\tau) d\tau\right)^{\frac{1}{p}}\right].
			\end{equation}
			By taking the $p$th power of \eqref{ineqh}, we obtain
			\begin{equation}
				h(t) \leq C\left[1+\int_0^t h(\tau) d\tau \right].
			\end{equation}
			Therefore, by applying Gronwall's Lemma, we get
			\begin{equation}\label{estu1}
				\|u_1\|_{L^p\left(Q_T\right)}\leq C .
			\end{equation}
			Repeating the method above with $u_2$ instead of $u_1$, we obtain
			\begin{equation} \label{estu2}
				\|u_2\|_{L^p\left(Q_T\right)}\leq C.
			\end{equation}
			Estimates \eqref{estu1} and \eqref{estu2} imply that for some $q>\frac{N+2s}{2s}$
			\begin{equation}
				\|u_1^{\alpha_1} u_2^{\alpha_2}\|_{L^q\left(Q_T\right)} \leq C.
			\end{equation}
			Going back to the third equation of System \eqref{SystemChemReac-DiffFR}, we have by Theorem \ref{BoundedSol}
			\begin{equation}
				\|u_3\|_{L^{\infty}\left(Q_T\right)}\leq C.
			\end{equation}
			Hence, $T_{\max}=+\infty$ which completes the proof in this case.
			
			\medbreak
			
			\noindent {\bf (iii)} and  {\bf (iv).} The proofs in these two cases are  straightforward and left to the reader.
		\end{proof}
		%
		%

\subsection{Second result of global existence : case of triangular structure}\label{TriangularStructure}
	
Our second  result of global existence reads as
		\begin{thm}\label{MainTh2} 
		Let
		$(u_{01},\ldots,u_{0m}) \in (L^\infty(\Omega)^+)^m$ where $m>2$. Besides {\bf (P)},   assume that $f_i$ is  
			 at most polynomial for each $i=1,\ldots,m$. Moreover, assume that there exist a vector
			$\mathbf{b}\in[0,+\infty)^m$, %
			and a lower triangular invertible matrix $Q\in {\mathcal{M}_m([0,+\infty))}$,
			such that
			\begin{equation}\label{TriangStructure}
			\forall\mathbf{r}=(r_1,\ldots,r_m)\in [0,+\infty)^m,\; Q\mathbf{f}(\mathbf{r})\leq \Big[1+\dis\sum\limits_{i=1}^m r_i\Big]\mathbf{b}.
		\end{equation}
			Then, System \eqref{ReactiondiffusionSystem} admits a unique nonnegative global strong  solution.
		\end{thm}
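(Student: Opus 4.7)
The plan is to mimic the classical proof of Theorem~\ref{Thm-TriangularStructure} (cf.\ \cite{Morgan} and \cite[Theorem~3.5]{PierreSurvey}), systematically replacing each classical analytic ingredient by its fractional counterpart established in Sections~2--3. Lemma~\ref{LocalExistence} first produces a unique nonnegative strong solution on a maximal interval $[0,T_{\max})$, and by the blow-up criterion {\em(iii)} of that lemma it suffices to derive, for every $T<T_{\max}$, an a priori bound of the form $\sum_{i=1}^m\|u_i\|_{L^\infty(Q_T)}\leq M(T)$.

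Because $Q$ is lower triangular and invertible with entries in $[0,+\infty)$, its diagonal is strictly positive, so summing the $m$ rows of~\eqref{TriangStructure} with weights equal to $1$ produces strictly positive coefficients $a_i:=\sum_{k\geq i}Q_{ki}$ with $\sum_i a_i f_i\leq C(1+\sum_i u_i)$; hence $(\mathbf{M}')$ is satisfied automatically. Testing the weighted sum $\sum_i a_i u_i$ of the equations against $1$ on $\Omega$ and using the nonnegativity of $\int_\Omega u(-\Delta)^s u\,d\mathbf{x}$ for $u\geq 0$ with homogeneous Dirichlet data outside $\Omega$, Gronwall's lemma yields the initial $L^1$ bound
\begin{equation*}
\sup_{t\in[0,T]}\sum_{i=1}^m\|u_i(t,\cdot)\|_{L^1(\Omega)}\leq C_T.
\end{equation*}

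Next I would show by finite induction on $k=1,\ldots,m$ that $u_k\in L^p(Q_T)$ for every $p\in(1,+\infty)$. For $k=1$, the first row of~\eqref{TriangStructure} reads $Q_{11}f_1\leq b_1(1+\sum_i u_i)$; combined with the quasi-positivity of $f_1$ in the first variable, the pair $(u_1,\sum_{i\geq 2}u_i)$ satisfies $(\mathbf{P})+(\mathbf{M})$ modulo the already-controlled total mass, so Lemma~\ref{FractionalDualityThm} delivers $u_1\in L^p(Q_T)$ for every finite $p$. For the inductive step, the $k$-th row of~\eqref{TriangStructure} gives
\begin{equation*}
Q_{kk}f_k\leq b_k\Big(1+\sum_{i=1}^m u_i\Big)-\sum_{j=1}^{k-1}Q_{kj}f_j,
\end{equation*}
and, since each $f_j$ is of polynomial growth and $u_1,\ldots,u_{k-1}$ are already controlled in every $L^p(Q_T)$, the right-hand side is bounded in $L^p$ by an expression of the form $C_p\bigl(1+(\sum_{i\geq k}u_i)^{N_p}\bigr)$. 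Applying Lemma~\ref{FractionalDualityThm} to the pair built from $u_k$ and a suitable $L^p$-controlled auxiliary combination of $u_{k+1},\ldots,u_m$ then propagates the estimate to~$u_k$.

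Once $u_i\in L^p(Q_T)$ for every finite $p$, the polynomial growth of each $f_i$ yields $f_i(\mathbf{u})\in L^p(Q_T)$ for some $p>(N+2s)/(2s)$; Theorem~\ref{BoundedSol} applied to each scalar equation then gives $u_i\in L^\infty(Q_T)$, contradicting $T_{\max}<+\infty$, so $T_{\max}=+\infty$. Uniqueness follows from the local Lipschitz character of the $f_i$ combined with these $L^\infty$ bounds. The main obstacle I expect is the inductive step in the $L^p$ bootstrap: one must design with care the $L^p$-controlled auxiliary quantity against which Lemma~\ref{FractionalDualityThm} is applied so that the polynomial exponent $N_p$ can be absorbed before the induction closes; this is precisely where the fractional duality estimate~\eqref{DualityInequality} from Theorem~\ref{DualProblemIneq} enters as the nontrivial analytic replacement for its classical heat-semigroup counterpart.
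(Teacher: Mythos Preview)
Your overall strategy---local existence, an $L^p$ bootstrap via duality, then $L^\infty$ via Theorem~\ref{BoundedSol}---matches the paper, but the inductive step as you describe it does not close. The difficulty is precisely the one you flag at the end: after writing
\[
Q_{kk}f_k\;\le\; b_k\Big(1+\sum_i u_i\Big)-\sum_{j<k}Q_{kj}f_j,
\]
you propose to control the terms $f_j$ for $j<k$ using the inductive $L^p$ bound on $u_1,\dots,u_{k-1}$. But each $f_j$ depends on \emph{all} of $u_1,\dots,u_m$, so the best this gives is $|f_j|\le C\bigl(1+\sum_i u_i\bigr)^{N}$ for some polynomial degree $N$, producing a power $(\sum_{i\ge k}u_i)^{N_p}$ that cannot be absorbed by Lemma~\ref{FractionalDualityThm}, which only yields a \emph{linear} relation between two components. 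Moreover, that lemma is stated for a genuine $2\times2$ system with a single diffusion coefficient in each equation; the ``pair'' $(u_1,\sum_{i\ge 2}u_i)$ you invoke in the base case is not such a system when the $d_i$ differ.

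The idea that makes the induction work---and that the paper uses---is to \emph{never} estimate $f_j$ directly. One substitutes $f_j=\partial_t u_j+d_j(-\Delta)^s u_j$ back into the $i$-th row inequality, introduces an auxiliary $w_i$ solving $\partial_t w_i+d_i(-\Delta)^s w_i=\frac{b_i}{q_{ii}}\bigl(1+\sum_j u_j\bigr)$ with zero data, and obtains
\[
q_{ii}\bigl[\partial_t(u_i-w_i)+d_i(-\Delta)^s(u_i-w_i)\bigr]\;\le\;-\sum_{j<i}q_{ij}\bigl[\partial_t u_j+d_j(-\Delta)^s u_j\bigr].
\]
Multiplying by the nonnegative solution $\mathcal Z$ of the dual problem~\eqref{DualProblem} (with $d=d_i$) and integrating, the right-hand side is handled by integration by parts: the operators $\partial_t$ and $(-\Delta)^s$ are transferred onto $\mathcal Z$, and estimate~\eqref{DualityInequality} of Theorem~\ref{DualProblemIneq} bounds everything by $\|\varphi\|_{L^{p'}}$. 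This produces $\|u_i-w_i\|_{L^p(Q_T)}\le C\bigl(1+\sum_{j<i}\|u_j\|_{L^p(Q_T)}\bigr)$, a purely linear recursion that closes by elementary induction in terms of the $w_j$'s. Since each $w_j$ is driven by $1+\sum_i u_i$, a final Gronwall argument on $\sum_j\|w_j(t,\cdot)\|_{L^p(\Omega)}^p$ bounds all $u_i$ in every $L^p(Q_T)$, after which the polynomial-growth hypothesis and Theorem~\ref{BoundedSol} conclude as you indicate.
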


	
		\begin{proof}
			Let us denote
			$Q=(q_{ij})_{i,j=1,\ldots,m}$,	$\mathbf{b}=(b_1,\ldots,b_m)^T$ and $\mathbf{f}(\mathbf{r})=(f_1(\mathbf{r}),\ldots,f_m(\mathbf{r}))^T$.
		\\
		 By Lemma \ref{LocalExistence}, System \eqref{ReactiondiffusionSystem} admits a unique strong solution $\mathbf{u}$ on $[0,T_{\max})\times \Omega$.  In what follows, we prove that this solution is global, {\it i.e.} $T_{\max}=+\infty$.
			Let us denote by $C$ a positive constant which may be different from line to another. Moreover, let $T\in(0,T_{\max})$ and $t\in (0,T]$.\\
			%
			\noindent	For each $i=1,\ldots,m$, we denote by $w_i$ the weak solution to
			\begin{equation}
				\label{Testfunc}
				\left\{  
				\begin{array}{rcll}
					\partial_t w_i(t,\mathbf{x})+d_i (-\Delta)^s w_i(t,\mathbf{x}) &=& \Big[1+\dis\sum\limits_{i=1}^m u_i(t,\mathbf{x})\Big] \dfrac{b_i}{q_{ii}}, &(t,\mathbf{x})\in Q_T,\\
					w_i(t,\mathbf{x})&=&0, &(t,\mathbf{x})\in (0,T)\times(\R^N\setminus\Omega),\\
					w_i(0,\mathbf{x})&=&0,& \mathbf{x}\in\Omega.
				\end{array}
				\right.
			\end{equation}	
	Then, $w_i$ fulfills 
		\begin{equation}
			\label{ineqlasttheo5}
			w_i(t,\mathbf{x})=\dis\int_0^t S_i(t-\tau)\Big[1+  \dis\sum\limits_{i=1}^mu_i(\tau,\mathbf{x}) \Big]\dfrac{b_i}{q_{ii}}d\tau,\quad (t,\mathbf{x})\in [0,T)\times\Omega.
		\end{equation}
	According to \eqref{TriangStructure}, we have 
	\begin{equation}\label{sum-}
		\sum\limits_{j=1}^i q_{ij} f_i(u_1(t,\textbf{x}),\ldots,u_m(t,\textbf{x}))\leq \Big[ 1+    \sum\limits_{j=1}^m  u_j (t,\textbf{x})  \Big] b_i.
	\end{equation}
%
		%
		$\bullet$	The case $i=1$. We subtract the first equation  of \eqref{Testfunc} from the first equation of \eqref{ReactiondiffusionSystem} multiplied by $q_{11}$. Then,
			\begin{equation}
			q_{11}\Big[\partial_t(u_1-w_1)(t,\textbf{x})+d_1(-\Delta)^s(u_1-w_1)(t,\textbf{x})  \Big]\leq 0.
		\end{equation}	
	Then, by Theorem \ref{maximumprinciple}, we get
	$$
	\|u_1-w_1\|_{L^\infty(Q_{T})}\leq \|u_{01}\|_{L^\infty(\Omega)}.
	$$
		Therefore, 
	$$
\|u_1\|_{L^\infty(Q_{T})} \leq  C [ 1+ \|	w_1 \|_{L^\infty(Q_{T})} ].
	$$
	By taking the $L^\infty(\Omega)$-norm of \eqref{ineqlasttheo5} and using the $L^\infty$-contractivity of $\{S_A(t)\}_{t\geq0}$, we get
		\begin{equation}
\|u_1(t,\cdot)\|_{L^\infty(\Omega)} \leq  C \Big[ 1+ \int_0^t\Big\| \sum\limits_{i=1}^m 	u_i(\tau,\cdot) \Big\|_{L^\infty(\Omega)} d\tau \Big] .
		\end{equation}		
	Hence,
			\begin{equation}\label{u_1}
	\|u_1(t,\cdot)\|_{L^\infty(\Omega)} \leq  C \Big[ 1+ \int_0^t \|u_1(\tau,\cdot)\|_{L^\infty(\Omega)} d\tau+ \int_0^t\Big\| \sum\limits_{i=2}^m 	u_i(\tau,\cdot) \Big\|_{L^\infty(\Omega)} d\tau \Big] .
		\end{equation}		
	%
%
			\\
			$\bullet$ The case $i\geq 2$.
		{By subtracting the $i$th equation of \eqref{Testfunc} from the $i$th equation of \eqref{ReactiondiffusionSystem} multiplied by $q_{ii}$, we obtain}
	%
			\begin{equation}
				\label{ineqlasttheo1}
				q_{ii}\Big[\partial_t(u_i-w_i)(t,\textbf{x})+d_i(-\Delta)^s(u_i-w_i)(t,\textbf{x})  \Big]\leq -\dis\sum\limits_{j=1}^{i-1} q_{ij}\Big[ \partial_t u_j (t,\textbf{x}) +d_j(-\Delta)^s u_j(t,\textbf{x})  \Big].
			\end{equation}
		Now, we use the same concept of duality as of Lemma \ref{FractionalDualityThm}. More precisely,
			let $\varphi$ be a nonnegative regular function and let $\mathcal{Z}$ be the solution to Problem \eqref{DualProblem} with $d=\frac{d_i}{q_{ii}}$. Then,  we multiply the two sides of the inequality \eqref{ineqlasttheo1} by $\mathcal{Z}$ and integrate over $Q_t$. 
			We obtain
			\begin{equation}\label{I1I2} \left\{\begin{array}{ll}
		\dis\iint_{Q_t}q_{ii}\partial_t (u_i-w_i) (\tau,\textbf{x}) \mathcal{Z}(\tau,\textbf{x})d\tau d\textbf{x}+\iint_{Q_t}d_i(-\Delta)^s(u_i-w_i)(\tau,\textbf{x}) \mathcal{Z}(\tau,\textbf{x})d\tau d\textbf{x}:=I_1
	\\\\\leq-\dis \sum\limits_{j=1 }^{i-1} q_{ij}
	\iint_{Q_t}\dis	\partial_t u_j (\tau,\textbf{x})\mathcal{Z}(\tau,\textbf{x})d\tau d\textbf{x}-\sum\limits_{j=1 }^{i-1} q_{ij}d_j\iint_{Q_t}(-\Delta)^su_j(\tau,\textbf{x})\mathcal{Z}(\tau,\textbf{x})d\tau d\textbf{x}:=I_2.
				\end{array}\right.
			\end{equation}
			Now, let us simplify the terms $I_1$ and $I_2$.\\
		We have
		\begin{equation}
			\begin{array}{ll}   	
				I_1
				\displaystyle 
				= q_{ii} \iint_{Q_t}(u_i-w_i)(\tau,\textbf{x})\varphi(\tau,\textbf{x}) d\tau d\textbf{x} - q_{ii}\int_{\Omega} u_{0i}(\textbf{x}) \mathcal{Z}_0 (\textbf{x})d\textbf{x}.
			\end{array}
		\end{equation}
		In addition, we have		
	\begin{equation}
		\begin{array}{ll}   	
		\dis	I_2 = \sum\limits_{j=1}^{i-1} q_{ij} \iint_{Q_t}u_j(\tau,\textbf{x})\left( \partial_t\mathcal{Z}(\tau,\textbf{x})-d_j(-\Delta)^s\mathcal{Z}(\tau,\textbf{x}) \right)d\tau d\textbf{x} +\sum\limits_{j=1}^{i-1} q_{ij}\int_{\Omega} u_{0j} (\textbf{x})\mathcal{Z}_{0}(\textbf{x}) d\textbf{x}.
		\end{array}
	\end{equation}		
			Then, \eqref{I1I2} implies
		\begin{equation}
			\left\{
			\begin{array}{ll}	
			\displaystyle	q_{ii} \iint_{Q_t}(u_i-w_i)(\tau,\textbf{x})\varphi(\tau,\textbf{x}) d\tau d\textbf{x}\\\leq \dis \sum\limits_{j=1 }^i q_{ij}\int_{\Omega} u_{0j} (\textbf{x})\mathcal{Z}_{0}(\textbf{x}) d\textbf{x}+\sum\limits_{j=1}^{i-1} q_{ij} \iint_{Q_t}u_j(\tau,\textbf{x})\left( \partial_t\mathcal{Z}(\tau,\textbf{x})-d_j(-\Delta)^s\mathcal{Z}(\tau,\textbf{x}) \right)d\tau d\textbf{x}.
			\end{array} 
		\right.
		\end{equation}
{Let $p>1$ large enough and $p^\prime=\dfrac{p}{p-1}$.} By using H\"{o}lder's inequality and the estimate \eqref{DualityInequality},  we get
   	\begin{equation}
   	\left\{
   	\begin{array}{ll}
   		\displaystyle	\iint_{Q_t}(u_i-w_i)(\tau,\textbf{x})\varphi(\tau,\textbf{x}) d\tau d\textbf{x}\\\\\displaystyle\leq C  \dis \sum\limits_{j=1}^i \|u_{0j}\|_{L^p(\Omega)}\|\mathcal{Z}_0\|_{L^{p^\prime}(\Omega)}+C  \dis \sum\limits_{j=1}^{i-1}\|u_j\|_{L^p(Q_t)}\left(\left\| \partial_t\mathcal{Z} \right\|_{L^{p^\prime}(Q_t)} + d_j \left\| (-\Delta)^s\mathcal{Z} \right\|_{L^{p^\prime}(Q_t)}  \right)\\\\
   		\displaystyle\leq \displaystyle C\dis \sum\limits_{j=1 }^i\|u_{0j}\|_{L^p(\Omega)} \|\varphi\|_{L^{p^\prime}(Q_t)} +C \dis \sum\limits_{j=1 }^{i-1} \|u_j\|_{L^p(Q_t)}\|\varphi\|_{L^{p^\prime}(Q_t)}\\\\
   		\displaystyle\leq C\Big(1+\dis \sum\limits_{j=1}^{i-1} \|u_j\|_{L^p(Q_t)}\Big)\|\varphi\|_{L^{p^\prime}(Q_t)}.
   	\end{array}
   	\right.
   \end{equation}		
	As $\varphi$ is regular, we obtain by duality that 
			\begin{equation}
				\label{ineqlasttheo2}
				\|u_i-w_i\|_{L^p(Q_t)}\leq C \Big( 1+ \dis\sum\limits_{j=1}^{i-1}\|u_j\|_{L^p(Q_t)}\Big).
			\end{equation}
			Hence, by an elementary induction on $i$, we obtain 
			\begin{equation}
				\label{ineqlasttheo2-}
				\|u_i\|_{L^p(Q_t)}\leq C \Big( 1+ \dis\sum\limits_{j=1 }^i\|w_j\|_{L^p(Q_t)}\Big).
			\end{equation}
			Consequently, by summing over $i$ and taking the $p$th power, we obtain 
			\begin{equation}
				\label{ineqlasttheo3}
				\Big\|\dis\sum\limits_{i=2}^mu_i\Big\|^p_{L^p(Q_t)}\leq C \Big( 1+ \sum\limits_{i=2}^m\|w_i\|^p_{L^p(Q_t)}\Big).
			\end{equation}
		\noindent Now, let us take the $L^p(\Omega)$-norm of \eqref{ineqlasttheo5}.	Then, by using the estimate \eqref{Ultracontractive} with $q=p$ and H\"{o}lder's inequality, we get 
			\begin{equation}
				\label{ineqlasttheo6}
				\|w_i(t,\cdot)\|^p_{L^p(\Omega)}\leq C \Big[ 1+   \Big\|\dis\sum\limits_{i=2}^mu_i\Big\|^p_{L^p(Q_t)}   \Big].
			\end{equation}
			Hence, we obtain by \eqref{ineqlasttheo3}
			\begin{equation}
				\label{ineqlasttheo7}
				\dis\sum\limits_{i=2}^m\|w_i(t,\cdot)\|^p_{L^p(\Omega)}\leq C \left[ 1+  	\dis  \int_0^t\sum\limits_{i=2}^m \|w_i(\tau,\cdot)\|^p_{L^p(\Omega)}   d\tau \right].
			\end{equation}	
		By applying  Gronwall's Lemma, we get $$\dis\sum\limits_{i=2}^m\|w_i(t,\cdot)\|^p_{L^p(\Omega)}<g(t),$$   where $g(t):[0,+\infty)\to [0,+\infty)$ is a non-decreasing and continuous function. Thus, for each $ i=2,\ldots,m$, $w_i(t,\cdot)$ is bounded in $L^\infty(\Omega)$, and so is $\dis\sum\limits_{i=2}^mu_i(t,\cdot)$ thanks to \eqref{ineqlasttheo3}.
			By hypothesis, 
			$f_i$ is at most polynomial. Then, we deduce that $f_i\in L^q(Q_{T})$ {where $q$ is large enough. Then, we can choose
				$q>\dfrac{N+2s}{2s}$} so that we have by Theorem \ref{BoundedSol}:
			$$
			\|u_i\|_{L^\infty(Q_{T})}\leq C \quad \forall i=2,\ldots,m.
			$$
			Going back to \eqref{u_1}, we also apply Gronwall's lemma to get $\|u_1\|_{L^\infty(Q_{T})}\leq C$. 
			Consequently, $T_{\max}=+\infty$.
		\end{proof}

		
		\section{Reaction-Diffusion System with exponential growth : Numerical simulations}\label{S_exp}
		
		The main objective of this section is to examine numerically
		the existence of 
		{strong} solutions to 
		the following system :
		\begin{equation}\label{ExampleReactiondiffusionSystem2x2} \tag{$S_{\exp}$}
			\left\{   \begin{array}{rclll}
				\partial_t u(t,\textbf{x})+d_1(-\Delta)^{s}u(t,\textbf{x})&=&\hskip-2mm-u(t,\textbf{x})e^{(v(t,\textbf{x}))^\beta},&(t,\textbf{x})\in Q_T,\\ 
				\partial_t v(t,\textbf{x})+d_2(-\Delta)^{s}v(t,\textbf{x})&=&\hskip1mm u(t,\textbf{x})e^{(v(t,\textbf{x}))^\beta},&(t,\textbf{x})\in Q_T,\\
				u(t,\textbf{x})=v(t,\textbf{x})&=&0,&(t,\textbf{x})\in (0,T)\times(\R^N\setminus\Omega),\\
				u(0,\textbf{x})&=&u_0(\textbf{x}),	& \textbf{x}\in\Omega,\\
				v(0,\textbf{x})&=&v_0(\textbf{x}),	& \textbf{x}\in\Omega,
			\end{array}    \right.
		\end{equation}
		where $d_1,d_2>0$, $u_0,v_0\in (L^\infty(\Omega))^+$ {in the case where $\beta>1$}.\\
		As stated in the introduction, the global existence of {strong} solutions to \eqref{ExampleReactiondiffusionSystem2x2} remains an open problem even for the classical case ($s=1$).
		{In \cite{LaamMalZit}, the second author and his coworkers have investigated numerically this open problem  in the classical case (see also \cite{Malekthese}).
		In the present paper, we use a different approach adapted to the fractional Laplacian 
		in order to treat System \eqref{ExampleReactiondiffusionSystem2x2}.	}
More precisely, we perform our numerical simulations for 2D-case by
		considering the unit ball
		$\Omega=B(0,1)\subset \R^2 $.
		Furthermore, we carry out these simulations using :
		\begin{itemize}
			\item[---] the  Crank--Nicolson scheme to approximate the time derivative;
			\vspace{-2mm}\item[---] the finite element method (FEM) to discretise in space and approximate the solution. To realize this, we rely on 
			\cite{AcosBersBort}  where the authors proposed a FEM adapted to the fractional 
			case as well as a Matlab code in 2D-case.  Notably, we use this code and modify it to adapt it to our problem;
			\vspace{-2mm}
			\item[---] the fixed-point iteration method to solve the resulting nonlinear equations at each discrete time. 
		\end{itemize}
		\vspace{-2mm}
		As in \cite{AcosBersBort}, we consider  an admissible triangulation of $\overline{\Omega}$ and we denote by $h$ its mesh size.
		Regarding the temporal discretization, let  $
		0=t_0<t_1<t_2<\ldots<t_{\tilde{N}}=T
		$ be a partition of the interval $(0,T)$ with time step $h_t=t_{n+1}-t_{n} $.
		As for the fixed-point iterative processes, they are performed under the control of a tolerance level equal to $10^{-6}$.

		\smallbreak\noindent
		Before going further, we would like to point out that we use numerical simulations to build a conjecture about the existence of solutions to System \eqref{ExampleReactiondiffusionSystem2x2}, without conducting any theoretical study of errors. That being said, it should be recalled that the Crank-Nicolson scheme is a second-order numerical method
%
		{(see, for instance, \cite[Chapter 2]{MortonMayers})}. 
		Moreover, according to \cite{AcosBersBort}, the order of convergence of the FEM in the linear case (Poisson equation for example) depends on the value of $s$.  Namely, for $0<s\leq \frac12$, the expected order of convergence with respect to $h$ is $s+\frac{1}{2}$, while for $\frac12<s<1$, the expected order of convergence is 1. On the other hand, let us  comment on the convergence of the fixed-point iterations. 
	Although	 its theoretical analysis is limited in our case, we are able to numerically estimate the constants of contraction at each iteration and verify that they remain $<1$.
	Furthermore, it should be stressed tha we present here  a partial numerical investigation of System \eqref{ExampleReactiondiffusionSystem2x2}. For more details, we refer the interested reader to \cite{DaouThese}.
	 }
		
		\medbreak
		Now, let us present some numerical simulations that illustrate the performance of our numerical scheme.
		To do so,
		let us consider the function $\rho(\textbf{x})= (1-\|\textbf{x}\|^2)^s$. It is known that $(-\Delta)^s \rho(\textbf{x})= \lambda(s)$, where $\lambda(s):=4^s \Gamma(s+1)^2$ (see \cite{dyda}). We use this fact to construct a solution to System \eqref{ExampleReactiondiffusionSystem2x2} with additional terms on the right-hand sides (r.h.s.). Specifically, the couple $(u(t,\textbf{x}),v(t,\textbf{x}))=(1,1/2) \rho(\textbf{x}) e^{-t}$ solves the system with the following r.h.s. terms:
		
		{\small $$
			\left\{
			\begin{array}{llll}
				\text{r.h.s.1} &=& -u(t,\textbf{x})e^{v(t,\textbf{x})^\beta} + \left( d_1 \lambda(s) - \rho(\textbf{x})+ \rho(\textbf{x}) \exp\left[2^{-\beta} e^{-\beta t} \rho(\textbf{x})^{\beta} \right] \right) e^{-t}\\
				\text{r.h.s.2} &=&\hskip2.5mm u(t,\textbf{x})e^{v(t,\textbf{x})^\beta} + \left(1/2 d_2 \lambda(s) - 1/2 \rho(\textbf{x})- \rho(\textbf{x}) \exp\left[2^{-\beta} e^{-\beta t} \rho(\textbf{x})^{\beta} \right] \right) e^{-t}.
			\end{array}
			\right.
			$$}
		
		\noindent	To assess the accuracy of our scheme, we compare the approximated solution (denoted with the subscript ``$h$'') to the exact one (denoted with the subscript  ``$ex$''). Furthermore, we denote by $U_h$ and $V_h$ the last vectors obtained at a certain time after performing the necessary fixed-point iterations. We also denote by $U_{ex}$ and $V_{ex}$ the vectors which contain the exact solution at the same nodes as $U_h$ and $V_h$ respectively. 
		In what follows, we use these notations.\\
		Moreover, we consider different values of $s$, $d_1$, $d_2$ and $\beta$. In fact, for  $s=0.25$ we fix $(d_1,d_2,\beta)=(1,3,2)$, for  $s=0.5$ we fix $(d_1,d_2,\beta)=(2,1,3)$, for $s=0.75$ we fix $(d_1,d_2,\beta)=(1,2, 3)$ and for $s=0.9$ we fix $(d_1,d_2,\beta)=(3,4,2)$.
		
		\noindent
		In {\bf Figure \ref{figure1}}, we have plotted the evolution in 1D of $U_h$ and $V_h$ for $s=0.75$ over time for $t=1,2,3,4,5$ by fixing the $y$-axis at 0.
		
		\noindent
		{\bf Figures \ref{figure2}} and {\bf\ref{figure3}} show the comparison graphs between the numerical and exact solutions for $s=0.75$ at $t=5$.

		\begin{center}
			\begin{minipage}{\linewidth}
				\centering
				\includegraphics[scale=0.5]{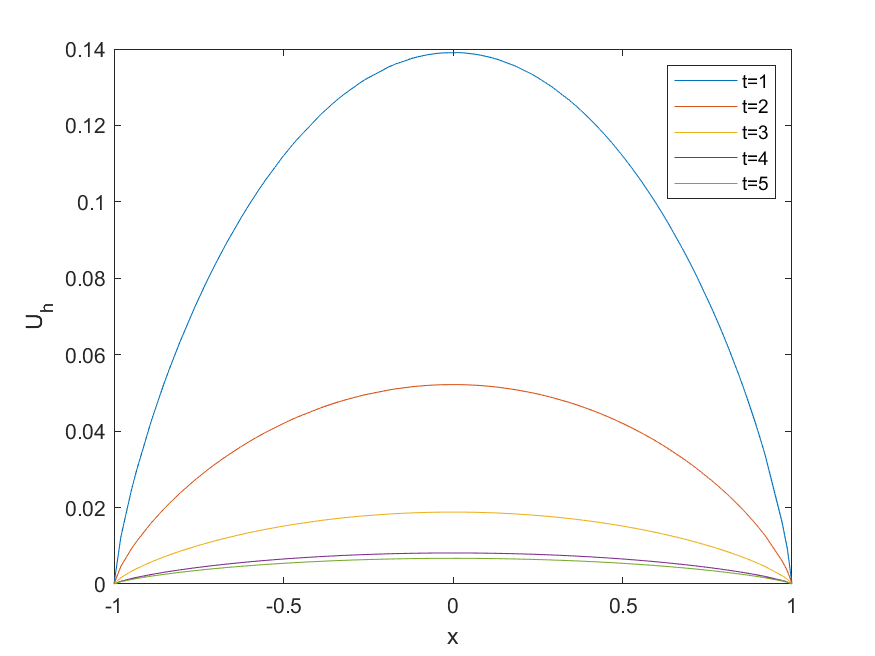} 	\includegraphics[scale=0.5]{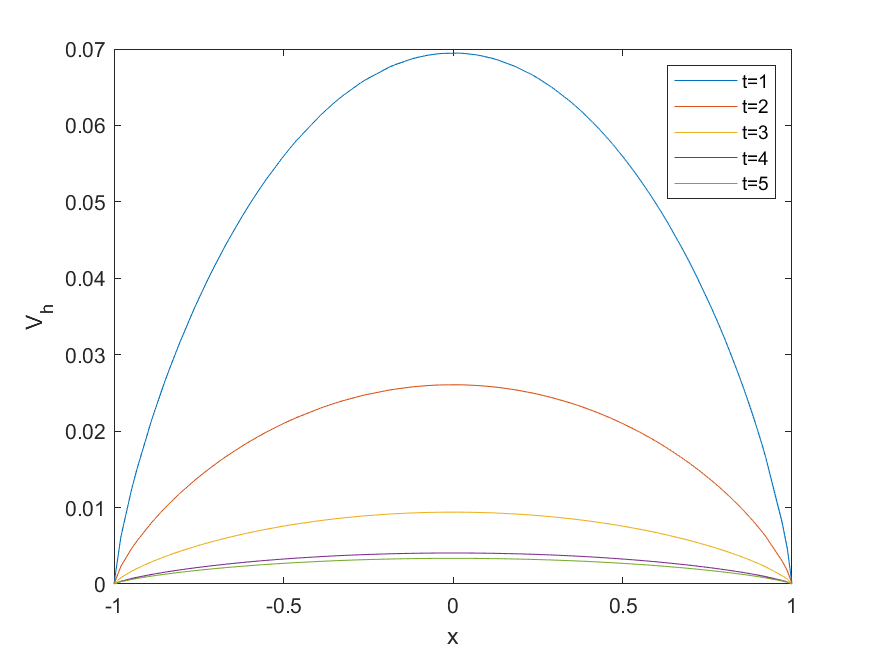}
				\captionof{figure}{Evolution in time of $U_h$ and $V_h$ for $s=0.75$ and $h=\frac{1}{48}$ with the $y$-axis fixed at 0}
				\label{figure1}
			\end{minipage}
		\end{center}

		\begin{center}
			\begin{minipage}{\linewidth}
				\centering
				\includegraphics[scale=0.5]{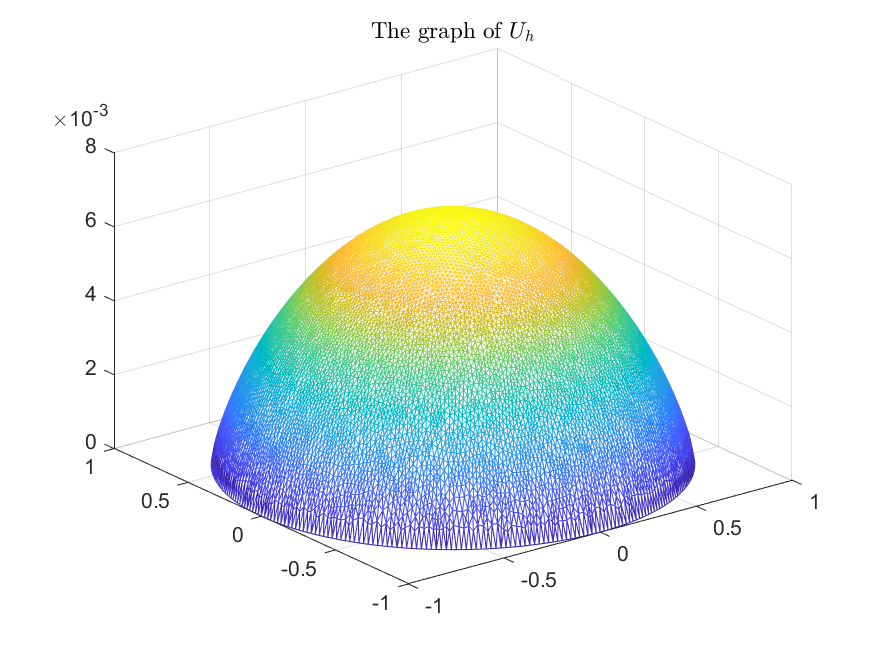}  	\includegraphics[scale=0.5]{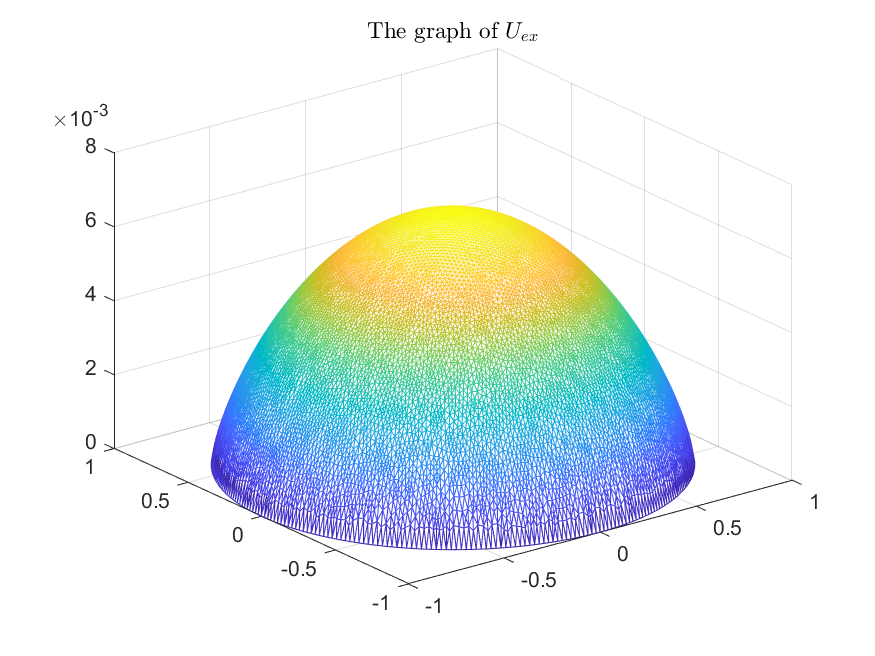}
				\captionof{figure}{	Comparison between $U_h$ and $U_{ex}$ for $s=0.75$ and $h=\frac{1}{48}$}
				\label{figure2}
			\end{minipage}
		\end{center}

		\begin{center}
			\begin{minipage}{\linewidth}
				\centering
				\includegraphics[scale=0.5]{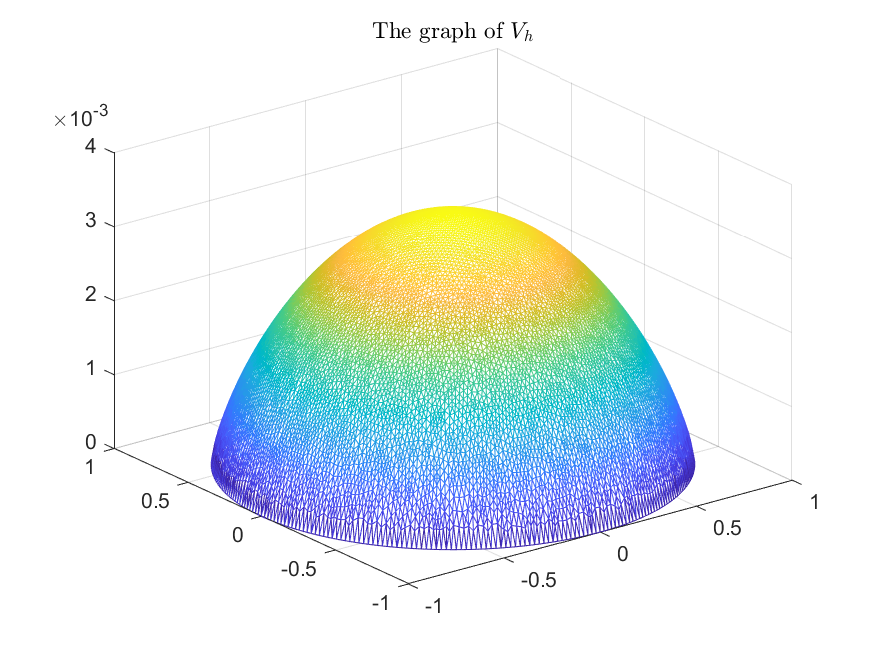}  	\includegraphics[scale=0.5]{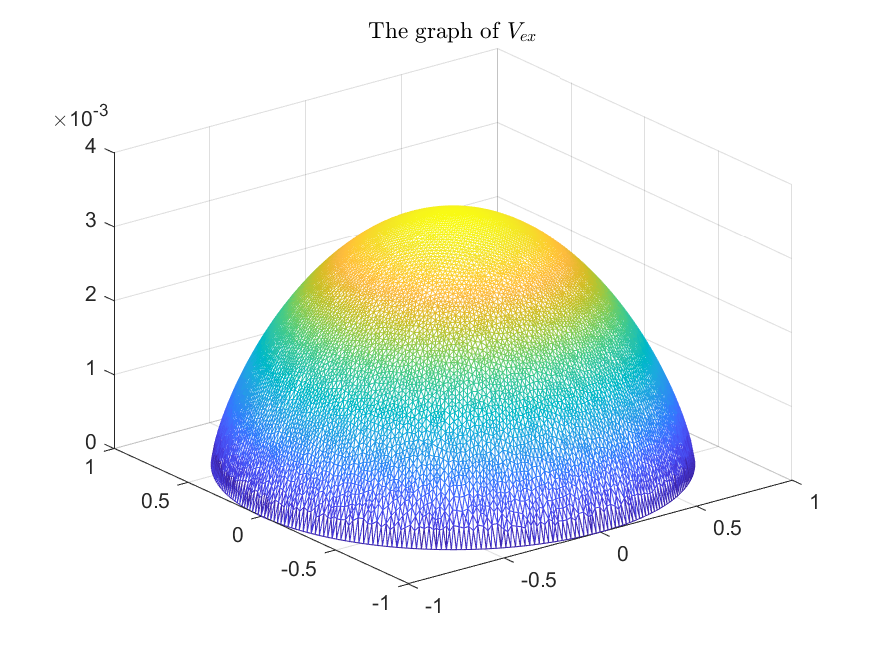}
				\captionof{figure}{	Comparison between $V_h$ and $V_{ex}$ for $s=0.75$ and $h=\frac{1}{48}$ at $t=5$ at $t=5$}
				\label{figure3}
			\end{minipage}
		\end{center}
		
		In order to verify the order of convergence of the FEM in the presented scheme,  we have plotted in {\bf Figure \ref{figure4}} the evolution of the  $L^2$-error in $\Omega$ between the numerical and exact solutions for different values of $s$.  
		
		\begin{center}
			\begin{minipage}{\linewidth}
				\centering
				\includegraphics[scale=0.49]{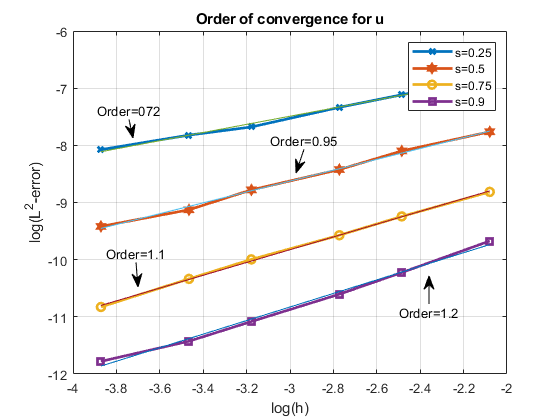} \includegraphics[scale=0.49]{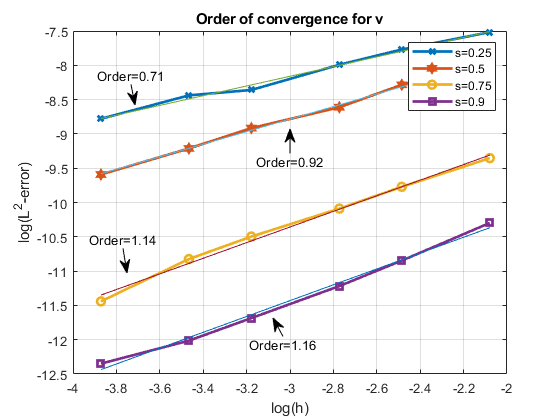}
				\captionof{figure}{Order of convergence for $h=\frac18,\frac{1}{12},\frac{1}{16},\frac{1}{24},\frac{1}{32},\frac{1}{48}$}
				\label{figure4}
			\end{minipage}
		\end{center}
		
		As shown in {\bf Figure \ref{figure4}}, our computed convergence orders for different values of $s$ match the theoretical expectations reported in \cite{AcosBersBort}.
		
		\medbreak
		
		Now, after testing the performance of our scheme, let us go back to our System \eqref{ExampleReactiondiffusionSystem2x2} with  $u_0(\textbf{x})=\rho(\mathbf{x})$ and  $v_0(\textbf{x})=1/2 \rho(\mathbf{x})$.  Our main focus 
		is to investigate the existence of a nonnegative numerical solution to  System \eqref{ExampleReactiondiffusionSystem2x2}  that may be globally time. To this end, we aim to numerically solve the system until a sufficiently large final time   ($t= 5\times 10^{10}$ in our case). 
		
		\noindent
		{\bf Figures \ref{figure5}} and {\bf\ref{figure6}} show the graphs of the numerical solutions obtained at $t=5\times 10^{10}$ for $s=0.75$ and different values of $d_1$, $d_2$ and $\beta$  by fixing $h_t=10^{-2}$ and $h=\frac{1}{48}$.

		\begin{center}
			\begin{minipage}{\linewidth}
				\centering
				\includegraphics[scale=0.5]{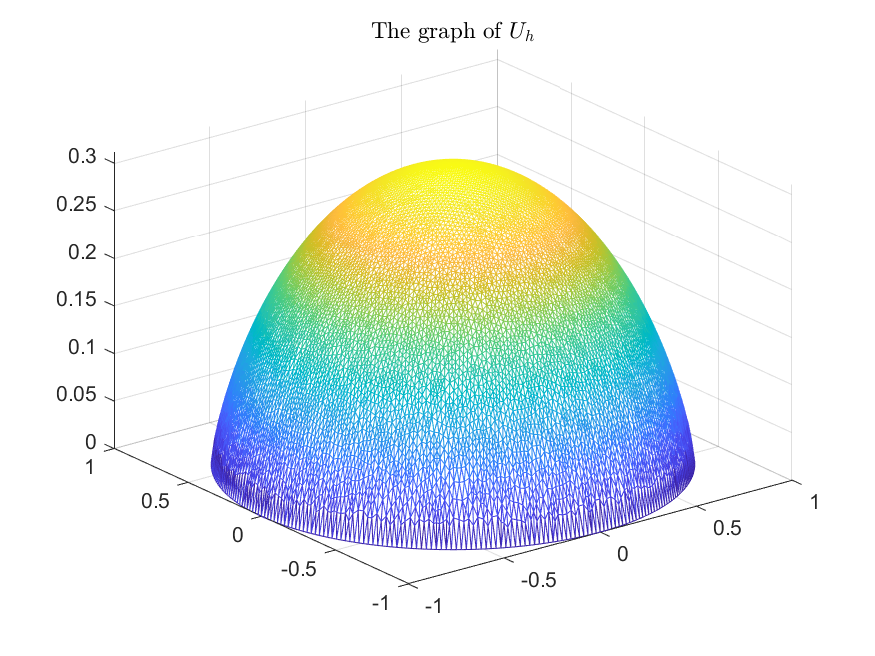} \includegraphics[scale=0.5]{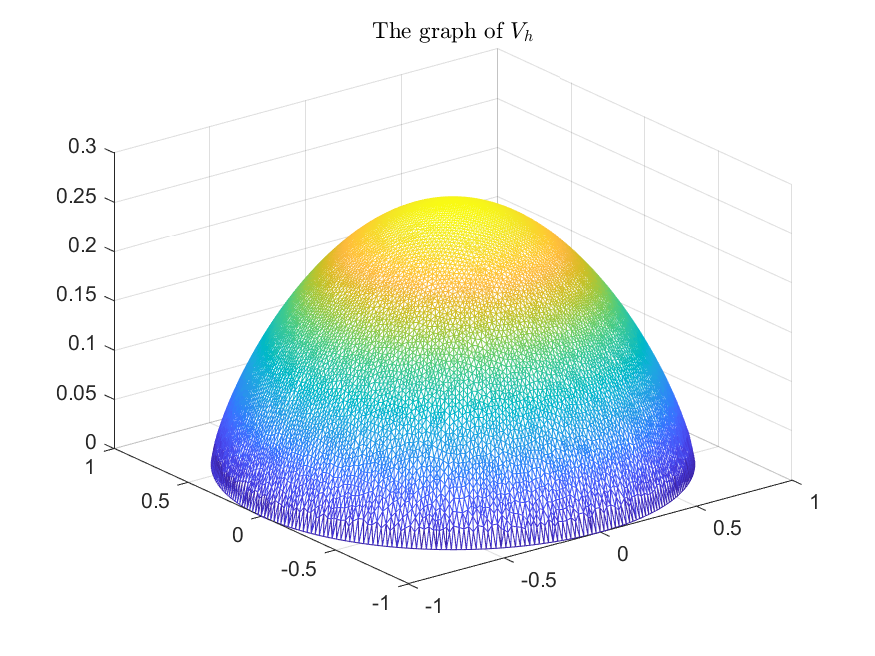}
				\captionof{figure}{	Plots of $U_h$ and $V_{h}$ for $s=0.75$ and $(d_1,d_2,\beta)=(1,2,3)$ at $t=5\times 10^{10}$}
				\label{figure5}
			\end{minipage}
		\end{center}

		\begin{center}
			\begin{minipage}{\linewidth}
				\centering
				\includegraphics[scale=0.5]{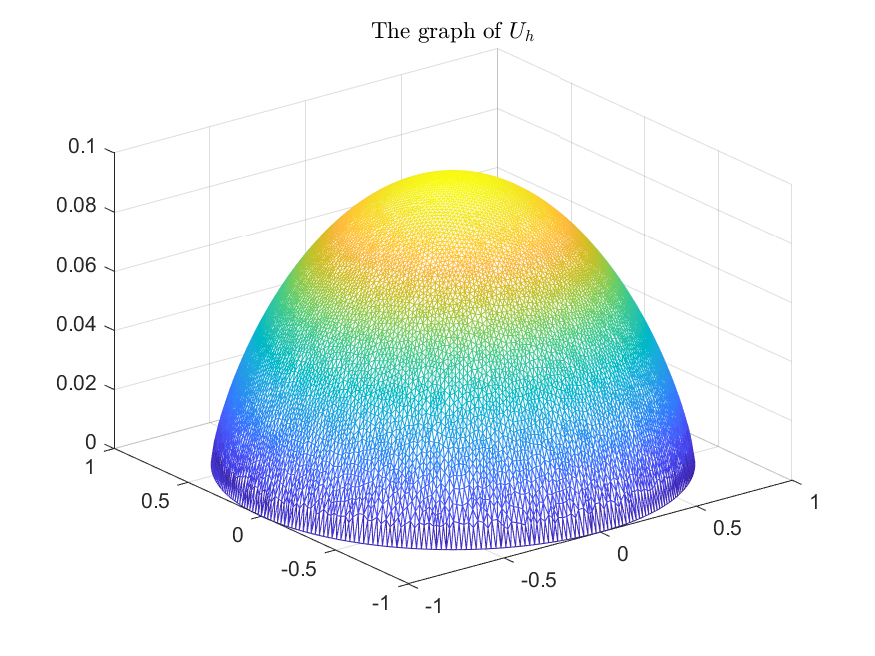} \includegraphics[scale=0.5]{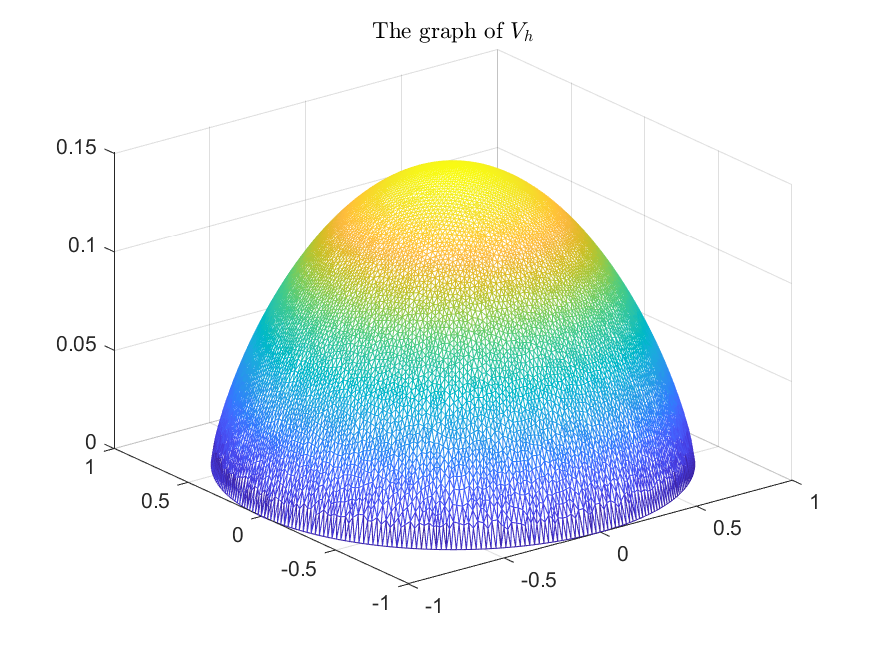}
				\captionof{figure}{	Plots of $U_h$ and $V_{h}$ for $s=0.75$ and $(d_1,d_2,\beta)=(4,3,5)$ at $t=5\times 10^{10}$}
				\label{figure6}
			\end{minipage}
		\end{center}
		
		In addition to the numerical solution plots, we also compute the $l^\infty$-norm of $U_h$ and $V_h$ at $t=5\times 10^{10}$ for different values of $h$. More precisely, we have fixed the time step $h_t$ to $10^{-2}$ and varied the mesh size $h$. The resulting norms are summarized in {\bf Table \ref{table1}}. The purpose of this table is to ensure that the solution does not change significantly with respect to the mesh size. As shown in the table, the norms remain relatively constant as the mesh size is varied. This indicates that the numerical solution is consistent.

		\begin{center}
			\begin{minipage}{\linewidth}
				\centering
				\renewcommand*{\arraystretch}{1.4}
				\begin{tabular}{|c|c|c||c|c|}
					\cline{2-5}
					\multicolumn{1}{c|}{} & \multicolumn{2}{c||}{\footnotesize $d_1=1$,$d_2=2$,$\beta=3$} & \multicolumn{2}{c|}{\footnotesize $d_1=4$,$d_2=3$,$\beta=5$}  \\
					\hline
					h &  $\|U_h\|_\infty$ &  $\|V_h\|_\infty$ & $\|U_h\|_\infty$   & $\|V_h\|_\infty$     \\
					\hline
					1/8\textcolor{white!5}{  } & 0.43189& 0.42518 &    0.09623&    0.15000   \\
					\hline
					1/12\textcolor{white!5}{  } &  0.43221& 0.42623  &   0.09630&    0.15007   \\
					
					\hline
					1/16\textcolor{white!5}{    } &  0.43246& 0.42701  &  0.09635&    0.15012  \\
					\hline
					1/24\textcolor{white!5}{    } & 0.43271& 0.42772 &    0.09637&    0.15016 \\
					\hline
					1/32\textcolor{white!5}{    } & 0.43285& 0.42887 &   0.09643&    0.15022\\
					\hline
					1/48\textcolor{white!5}{    } & 0.43292& 0.42853 &     0.09645&    0.15028 \\
					\hline
				\end{tabular}
				\captionof{table}{	Table of $\ell^\infty$-errors of $U_h$ and $V_h$ for $s=0.75$ at $t=5\times 10^{10}$}
				\label{table1}
			\end{minipage}
		\end{center}
		
		In conclusion, we have studied the global existence of nonnegative numerical solutions to the fractional reaction-diffusion system \eqref{ExampleReactiondiffusionSystem2x2}.
		Our numerical experiments have showed that the system has nonnegative numerical solutions that can be computed for a large final time. This indicates that a global solution in time to System \eqref{ExampleReactiondiffusionSystem2x2} seems to exist. However, it is not clear whether this solution will exist 
		for all times. In future work, we aim at investigating this issue theoretically. 

		\section{Appendix}
		
		
		{In this section, we give the proofs of Theorems \ref{BoundedSol} and \ref{maximumprinciple}. For the convenience of the reader, we recall
			their statements.}
		\begin{manualthm}{3.1} 
			Assume that  $w_0 \in L^\infty(\Omega)$ and $h\in L^p(Q_T$) with $p>1$.
			Let $w$ be the weak solution to Problem \eqref{ParabolicProblem}. Then, for any $p>\dfrac{N+2s}{2s}$, there exists a constant $C>0$ such that
			$$
			\|w\|_{L^\infty(Q_T)}\leq \|w_0\|_{L^\infty(\Omega)}+C\|h\|_{L^p(Q_T)}.
			$$
		\end{manualthm}
		\begin{proof}
		For any $t\geq0$, the unique weak solution to Problem \eqref{ParabolicProblem} reads as :
		\begin{equation}\label{UniqueweakSolution} \tag{$3.4$}
			w(t,\cdot)=S_{A}(t) w_0+\int_0^t S_{A}(t-\tau) f(\tau,\cdot) d\tau.
		\end{equation}   
		As  $\{S_{A}(t)\}_{t\geq 0}$ is $L^\infty$-contractive, we obtain
		$$
		\|w(t,\cdot)\|_{L^\infty(\Omega)}\leq \|w_0\|_{L^\infty(\Omega)}+ \int_0^t \|  S_{A}(t-\tau) f(\tau,\cdot)\|_{L^\infty(\Omega)} d\tau.
		$$	
		According to Theorem \ref{TheoremSemigroup}, the semigroup $\{S_{A}(t)\}_{t\geq 0}$ is ultracontractive. Thus, for $q=+\infty$ and for any $p>1$, there exists a constant $C>0$ such that
		\begin{equation} \tag{$3.5$}
			\|S_{A}(t-\tau)f(\tau,\cdot)\|_{L^\infty(\Omega)}\leq C (t-\tau)^{-\frac{N}{2sp}} \|f(\tau,\cdot)\|_{L^p(\Omega)},\quad \text{for any }\; \tau\in(0,t).
		\end{equation}
		Therefore, we get
		$$
		\|w(t,\cdot)\|_{L^\infty(\Omega)}\leq \|w_0\|_{L^\infty(\Omega)}+C \int_0^T (t-\tau)^{-\frac{N}{2sp}} \|f(\tau,\cdot)\|_{L^p(\Omega)} d\tau.
		$$
		Now, by using H\"{o}lder's inequality, we obtain
		$$
		\|w(t,\cdot)\|_{L^\infty(\Omega)}\leq \|w_0\|_{L^\infty(\Omega)}+C \left(\int_0^T (t-\tau)^{-\frac{N}{2s(p-1)}}d\tau\right)^{\frac{p-1}{p}}  \|f\|_{L^p(Q_T)}.
		$$
		The quantity $\dis\int_0^T (t-\tau)^{-\frac{N}{2s(p-1)}}d\tau$ is finite if and only if $p>\dfrac{N+2s}{2s}$. Hence , we get the desired result.
		\end{proof}
	\bigbreak
	\begin{manualthm}{3.2}	Assume that  $w_0 \in L^\infty(\Omega)$  and $h\in L^p(Q_T$) with $p>1$. Let $w$ be the weak solution to Problem \eqref{ParabolicProblem}. If $h\leq 0$, then $w\in L^\infty(Q_T)$ and 
		$$
		\|w\|_{L^\infty(Q_T)}\leq\|w_0\|_{L^\infty(\Omega)}.	
		$$
	\end{manualthm}
		\begin{proof}
	The idea of the proof is similar to the proof of  \cite[Proposition 3.18]{Fer}.\\
	Let us define $v:=\|w_0\|_{L^\infty(\Omega)}-w$ and write $v=v^++v^-$, where $v^+=\max\{v,0\}$ and $v^-=\min\{v,0\}$. \\
	We will proceed by contradiction. Suppose that the set $\{\mathbf{x}\in\R^N \,:\,v^-(t,\mathbf{x})\neq 0\; \forall t\in (0,T) \}$ is nonempty.
	\\
	Let us multiply the first equation of Problem \eqref{ParabolicProblem} by $v^-$ and integrate over $\R^N$ then over $(0,T)$. Then, we obtain
	$$
	\begin{array}{ll}
		\dis\int_0^T\hskip-3mm \int_\Omega \partial_t w(t, \mathbf{x}) v^-( t,\mathbf{x}) d\mathbf{x} dt+\frac{a_{N,s}}{2} \int_0^T\hskip-3mm \iint_{\R^{N}\times \R^N}\hskip-4mm \frac{(w(t, \mathbf{x})-w(t,\mathbf{y}))(v^-(t,\mathbf{x})-v^-(t, \mathbf{y}))}{\|\mathbf{x}-\mathbf{y}\|^{N+2s}} d\mathbf{x} d\mathbf{y} dt&\\\\&\hspace*{-5.7cm}\dis= \int_0^T\int_\Omega f(t,\mathbf{x})v^-(t, \mathbf{x}) d\mathbf{x} dt.
	\end{array}
	$$
	For any $\mathbf{x},\mathbf{y}\in\R^N$, we have
	$$
	\begin{array}{ll}
		(w(t, \mathbf{x})-w(t, \mathbf{y}))(v^-( t,\mathbf{x})-v^-(t, \mathbf{y}))&\\&\hspace*{-3cm}=(w(t, \mathbf{x})-\|w\|_{L^\infty(\Omega)}+\|w\|_{L^\infty(\Omega)}-w(t, \mathbf{y}))(v^-( t,\mathbf{x})-v^-(t, \mathbf{y}))\\&\hspace*{-3cm}= (-v(t, \mathbf{x})+v(t, \mathbf{y}))(v^-( t,\mathbf{x})-v^-(t, \mathbf{y}))\\&\hspace*{-3cm}= (-v^+(t, \mathbf{x})+v^-(t, \mathbf{x})+v^+(t, \mathbf{y})-v^-(t, \mathbf{y}))(v^-( t,\mathbf{x})-v^-(t, \mathbf{y}))\\&\hspace*{-3cm}= (v^-(t, \mathbf{x})-v^-(t, \mathbf{y}))^2-(v^+( t,\mathbf{x})-v^+(t, \mathbf{y}))(v^-( t,\mathbf{x})-v^-(t, \mathbf{y})).
	\end{array}
	$$
	Moreover, we know that
	if $v^->0$ then $v^+=0$, and if $v^-=0$ then $v^+>0$. Therefore, 
	$$
	v^+(t, \mathbf{y})(v^-(t, \mathbf{x})-v^-(t, \mathbf{y}))\geq 0,
	$$
	This implies
	$$
	(w(t, \mathbf{x})-w(t, \mathbf{y}))(v^-(t, \mathbf{x})-v^-(t, \mathbf{y}))\geq 0.
	$$
	Thence, we get
	$$
	\frac{a_{N,s}}{2} \int_0^T \iint_{\R^{N}\times \R^{N}} \frac{(w(t, \mathbf{x})-w(t, \mathbf{y}))(v^-(t, \mathbf{x})-v^-(t, \mathbf{y}))}{\|\mathbf{x}-\mathbf{y}\|^{N+2s}} d\mathbf{x} d\mathbf{y} dt\geq 0.
	$$
	As $w=\|w_0\|_{L^\infty(\Omega)}-v$, then $\partial_t w=-\partial_t v=-\partial_t v^+ +\partial_t v^-$ and
	$$
	\int_0^T\hskip-3mm \int_\Omega \partial_t w(t,\mathbf{x}) v^-(t, \mathbf{x}) d\mathbf{x} dt=\hskip-2mm  \int_0^T\hskip-3mm  \int_\Omega \partial_t v^-(t, \mathbf{x}) v^-(t, \mathbf{x}) d\mathbf{x} dt\hskip-1mm -\hskip-1mm  \int_0^T \hskip-3mm \int_\Omega \partial_t v^+(t, \mathbf{x}) v^-(t,\mathbf{x}) d\mathbf{x} dt.
	$$
	Furthermore, we have $\dis\int_0^T \int_\Omega\partial_t v^+(t,\mathbf{x}) v^-(t, \mathbf{x}) d\mathbf{x} dt=0$, then
	$$
	\begin{array}{ll}
		\dis\int_0^T \dis\int_\Omega \partial_t w(t, \mathbf{x}) v^-( t,\mathbf{x}) d\mathbf{x} dt&=\dfrac{1}{2}  \dis\int_\Omega \int_0^T \partial_t (v^-(t,\mathbf{x}))^2 dt d\mathbf{x}\\&=\dfrac{1}{2}\dis  \int_\Omega \left( (v^-(T,\mathbf{x}))^2 -(v^-(0,\mathbf{x}))^2    \right) d\mathbf{x}\\&=\dfrac{1}{2}\dis  \int_\Omega \left( v^-(T,\mathbf{x})^2   \right) d\mathbf{x}\geq 0,
	\end{array}
	$$
	as $v^-(0,\mathbf{x})=0$.\\
	Now, we have
	$$
	\int_0^T \hskip-3mm \int_\Omega \partial_t w(t,\mathbf{x}) v^-(t,\mathbf{x}) d\mathbf{x} dt+\frac{a_{N,s}}{2} \int_0^T\hskip-3mm  \int_{\R^{N}\times \R^N}\hskip-6mm \frac{(w(t, \mathbf{x})-w( t,\mathbf{y}))(v^-(t, \mathbf{x})-v^-(t, \mathbf{y}))}{\|\mathbf{x}-\mathbf{y}\|^{N+2s}} d\mathbf{x} d\mathbf{y} dt\geq 0,
	$$
	which is a contradiction as $f\leq 0$ and $\dis\int_0^T\int_\Omega f(t,\mathbf{x})v^-(t,\mathbf{x}) d\mathbf{x} dt\leq 0$. This completes the proof. 
	\end{proof}


\begin{thebibliography}{ABCD}
			
			\bibitem{AbaVal} N. Abatangelo, E. Valdinoci: {\it Getting Acquainted with the Fractional Laplacian}. In: Dipierro, S. (eds) Contemporary Research in Elliptic PDEs and Related Topics. Springer INdAM Series 33. Springer, 2019.
			
			\bibitem{AbdAttBenLaam} B. Abdellaoui, A. Attar, R. Bentifour,  E.-H. Laamri:
			{\it Existence results to a class of nonlinear parabolic systems involving potential and gradient terms}. Mediterranean Journal of Mathematics 17(119) (2020), 1--30.
			
			
			\bibitem{AcosBersBort} G. Acosta, F.M. Bersetche, J.P. Borthagaray: \emph{A short FE implementation for a 2d homogeneous Dirichlet problem of a fractional Laplacian.}   Comput.
			Math. Appl. 74(4) (2017), 784--816.
			
			\bibitem{Ahmad+4Hindawi}
			B. Ahmad, M.S. Alhothuali, H.H. Alsulami, M. Kirane, S. Timoshin: {\it On nonlinear nonlocal systems of reaction diffusion equations}. Abstr. Appl. Anal. 2 (2014), 1--6.
			
			\bibitem{Ahmad+Alsaedi+2}
			B. Ahmad, A. Alsaedi, D. Hnaien, M. Kirane: {\it On  a semi-linear  system of nonlocal time and space reaction diffusion equations with exponential nonlinearities}. J. Integral Equa. and Applications 30 (2018), 17--40.
			%
			%
			\bibitem{Alsaedi+Al-Yami+2}
			A. Alsaedi, M. Al-Yami, M. Kirane, F. Momenkhan: {\it A triangular nonlinear reaction-fractional diffusion system with a balance law}. Math. Methods Appl. Sci. 41 (2018), 1825--1830.
		
		\bibitem{AndMazRosTol} F. Andreu-Vaillo, J.M. Maz\'on, J.D. Rossi, J.J. Toledo-Melero: \emph{Nonlocal Diffusion Problems}. Mathematical Surveys and Monographs, 165, American Mathematical Society, Providence, RI; Real Sociedad Matemàtica Espa\~nola, Madrid, 2010.
		
		\bibitem{AtmaBirDaouLaam-Parabo}{S. Atmani, K. Biroud, M. Daoud, E.-H. Laamri}: {\it On some nonlocal parabolic reaction-diffusion systems with gradient source terms}. Submitted.
		
		\bibitem{AtmaBirDaouLaam-Acta} {S. Atmani, K. Biroud, M. Daoud, E.-H. Laamri}: {\it  On some nonlocal elliptic systems with gradient source terms.} Acta Appl. Math. 181(9) (2022). Doi.org/10.1007/s10440-022-00528-4.
		
		
		\bibitem{AtmaBirDaouLaam-POT-GRAD} {S. Atmani, K. Biroud, M. Daoud, E.-H. Laamri:} {\it On  a class of some fractional  parabolic systems with potential-gradient source terms}. Submitted.
		
		
		
		
		\bibitem{AtmaBirDaouLaam-s1-s2} {S. Atmani, K. Biroud, M. Daoud, E.-H. Laamri}: {\it Fractional elliptic reaction-diffusion systems with coupled gradient terms and different diffusion}. Submitted.
		
		
		
		
		\bibitem{Barabanova}
		A.~Barabanova:
		\newblock{ \em On the global existence of solutions of a reaction-diffusion system
			with exponentiel nonlinearity}.
		\newblock {\em \normalfont{Proc. Amer. Math. Soc.}} 122 (1994), 827-831.
		
		
		\bibitem{BicWarZua2} 
		U. Biccari, M. Warma, E. Zuazua:
		\emph{Local Regularity for Fractional Heat Equations}.
		Recent Advances in PDEs: Analysis, Numerics and Control, SEMA SIMAI Springer Ser. 17, Springer, Cham, 2018, 233--249.
		
		
		\bibitem{BicWarZua3} 
		U. Biccari, M. Warma, E. Zuazua:
		\emph{Control and numerical approximation of fractional diffusion equations}.
		Handbook of Numerical Analysis XXIII. Numerical Control: Part A, 23 (2022), 1--58.
		
		
		
		
		
		
		\bibitem{BonFigRosOt}	M. Bonforte, A. Figalli, X. Ros-Oton:	\emph{Infinite Speed of Propagation and Regularity of Solutions to the Fractional Porous Medium Equation in General Domains.} Commun. Pure Appl. Math. 70(8) (2017), 1472--1508.
		
		
		\bibitem{BonLeiPas}	A. Bonito, W. Lei, J.E. Pasciak:	\emph{Numerical approximation of the integral fractional	Laplacian.} Numerische Mathematik 142 (2019), 235--278.
		
		\bibitem{BoudibaThese} N. Boudiba: \emph{Existence globale pour des systèmes de réaction-diffusion avec contrôle de masse}. Ph.D. thesis, Université de Rennes 1, France, 1999.
		
		
		
		\bibitem{BucVal}	C. Bucur, E. Valdinoci: \underline{\it Nonlocal diffusion and applications}. ISBN 978-3-319-28738-6. Lecture Notes of the Unione Matematica Italiana, 2016.
		
		
		
		\bibitem{CanDesFel} J.A. Ca\~{n}izo, L. Desvillettes, K. Fellner: \emph{Improved Duality Estimates and Applications to Reaction-Diffusion Equations.} Commun. Partial Differ. Equ.  39(6) (2014), 1185--1204.
		
		\bibitem{CapGoudVass} {C. Caputo, T. Goudon, A. Vasseur}: {\it Solutions of the 4-species quadratic reaction-diffusion system are bounded and $\mathcal{C}^\infty$-smooth, in any space dimension}.  Anal. PDE 12(7) (2019),  1773--1804.
		
		
		\bibitem{ClausWarma} B. Claus, M. Warma: \emph{Realization of the fractional Laplacian with nonlocal exterior conditions via forms method.} J. Evol. Equ. 20 (2020), 1597--1631.
		
			\bibitem{CiavoPert} G. Ciavolella, B. Perthame: \emph{Existence of a global weak solution for a reaction–diffusion problem with membrane conditions.} J. Evol. Equ. 21 (2021), 1513--1540.
		
		
		\bibitem{CotTav} A. Cotsiolis, N.K. Tavoularis: \emph{Best constants for Sobolev inequalities for higher order fractional derivatives.}   J. Math. Anal. Appl. 295(1) (2004), 225--236.
		
		
		
		\bibitem{DaouThese} { M. Daoud}: {\it Nonlinear elliptic and parabolic reaction-diffusion  systems governed by fractional laplacians~:~Analysis and Numerics.} Ph.D. thesis,  Hassan II University of Casablanca, 2023.
		
		\bibitem{DaouLaam} { M. Daoud, E.-H. Laamri}: {\it Fractional Laplacians : A short survey.} Discrete Contin. Dyn. Syst. - S 15(1) (2022), 95--116.
		
		
		
		
		
		
		
		
		\bibitem{DiPalVal} E. Di Nezza, G. Palatucci, E. Valdinoci: \emph{Hitchhiker's guide to the fractional Sobolev spaces.} Bull. Sci. Math., 136(5) (2012), 521--573.
		
		
		\bibitem{DuoWangZhang} S. Duo, H. Wang, Y. Zhang : \emph{A comparative study on nonlocal diffusion operators related to the fractional Laplacian.}  Discrete Contin. Dyn. Syst. -B 24 (2019), 231--256.
		
		\bibitem{dyda} B. Dyda: { Fractional calculus for power functions and eigenvalues of the fractional Laplacian.} Fract. Calc. Appl. Anal. 15(4) (2012), 536--555.
		
		{\bibitem{EdmundsEvans}  D.E. Edmunds, W.D. Evans: \underline{\it Fractional Sobolev Spaces and Inequalities}. Cambridge University Press. 2023. ISBN  978-1-009-25463-2.}
		
		\bibitem{FellLaam} K. Fellner, E.-H. Laamri: \emph{ Exponential decay towards equilibrium and global classical solutions for nonlinear reaction-diffusion systems.} J. Evol. Equ. 16(3) (2016), 681--704.
		
		\bibitem{FellMorgTang20}{ K. Fellner, J. Morgan, B.Q. Tang}: \textit{Global classical solutions to quadratic systems with mass control in arbitrary dimensions}. Ann. Inst. H. Poincar\'e Anal. Non Lin\'eaire 37(2) (2020),  281--307.
		
		\bibitem{FellMorgTang21}{ K. Fellner, J. Morgan, B.Q. Tang}: \textit{Uniform-in-time for quadratic reaction-diffusion systems with mass dissipation  in higher dimensions}. Discrete Contin. Dyn. Syst. -S  14(2) (2021), 635--651.
		
		
		
		
		\bibitem{FelsKass}  M. Felsinger, M. Kassmann:
		{\it Local regularity of parabolic nonlocal operators}. Commun. Partial Differ. Equ.  38(9) (2013), 1539--1573.
		
		
		
			\bibitem{Fer} X. Fernandez-Real
		: \emph{Boundary regularity for the fractional heat equation.}  Bachelor's degree thesis, Polytechnic University of Catalonia, September 2014.
		
		\bibitem{FerRos}  X. Fernandez-Real, X. Ros-Oton:
		{\it Boundary regularity for the fractional heat equation}. Rev. Acad. Cienc. Ser. A Math. 110 (2016), 49--64.
		
		
	
		
		\bibitem{FisServVald} A. Fiscella, R. Servadei, E. Valdinoci: \emph{Density properties for fractional Sobolev spaces.}  Annales Academiae Scientiarum Fennicae. Mathematica 40(1) (2015), 235--253.
		
		
		\bibitem{GalWarma1} C.G. Gal, M. Warma: \emph{Reaction-diffusion equations with fractional diffusion on non-smooth domains with various boundary conditions.}  Discrete Contin. Dyn. Syst. 36(3) (2016), 1279--1319.
		
			\bibitem{GalWarma3} C.G. Gal, M. Warma: \emph{Long-term behavior of reaction-diffusion equations with nonlocal boundary conditions on rough domains.}  Z. Angew. Math. Phys. 67(4) (2016), 1--42.
		
		\bibitem{GalWarma2} C.G. Gal, M. Warma: \emph{Nonlocal transmission problems with fractional diffusion and boundary conditions on nonsmooth interfaces.}  Commun. Partial Differ. Equ.  42(4) (2017), 579--625.
		
	
		
		\bibitem{GoudVass10} T. Goudon, A. Vasseur: \textit{Regularity Analysis for Systems of Reaction-Diffusion Equations}. Annales Sci. ENS (4) 43(1) (2010),  117--142.
		
		\bibitem{HaraYouk} A. Haraux, A. Youkana: \textit{On a result of K. Masuda concerning reaction-diffusion equations}. T\^ohoku Math. J. 40 (1988), 159--163 .
		
		
		
		
		
		\bibitem{HenLanStr} B.I. Henry, T.A.M. Langlands, P. Straka: \emph{An Introduction to Fractional Diffusion}. Complex Physical, Biophysical and Econophysical Systems (2010), 37--89.
		
		\bibitem{HerLacVelaz} M.A. Herrero, A.A. Lacey, J.L. Vel\`azquez: \textit{Global existence for reaction-diffusion systems modelling ignition}. Arch. Rat. Mech. Anal. 142 (1998), 219--251.
		
		
		\bibitem{HolMarPie} S.L. Hollis, R.H. Martin, M. Pierre: \emph{Global existence and boundedness in reaction-diffusion systems}. SIAM J. Math. Ana. 18 (1987), 744--761.
		
		
		\bibitem{KilSri} A.A. Kilbas, H.M. Srivastava, J.J. Trujillo : \underline{\emph{Theory and Applications of Fractional Diff-}} \underline{\emph{erential Equations}}.
		Elsevier, Amsterdam, 2006.
		
		\bibitem{LaamThese} E.-H. Laamri: \emph{Existence globale pour des systèmes de réaction-diffusion dans $L^1$}, Ph.D thesis, Université de Nancy 1, France, 1988.
			
		\bibitem{LaamActa2011} E.-H. Laamri:	\emph{Global existence of classical solutions for a class of reaction-diffusion systems}.  Acta Appl. Math. 115(2) (2011), 153--165. 
		
	
	\bibitem{LaamMalZit} E.-H. Laamri, R. Malek, C. Ziti: \emph{Global existence for  parabolic reaction-diffusion systems with
		exponential growth: Numerical study}.
	Submitted.	
		
		\bibitem{LaamPert} E.-H. Laamri, B. Perthame:	\emph{Reaction-diffusion systems with initial data of low regularity}. J. Differ. Equ. 269(11) (2020), 9310--9335.
		
		\bibitem{LaamPierANHP} E.-H. Laamri, M. Pierre:	\emph{Global existence for reaction-diffusion systems with nonlinear diffusion and control of mass}. Ann. Inst. H. Poincaré Anal. Non Linéaire 34(3) (2017), 571--591.
		
		\bibitem{LaamPierM3AS} E.-H. Laamri, M. Pierre:	\emph{Stationary reaction-diffusion systems in $L^1$.} M3AS 28(11) (2018), 2161--2190.
		
		\bibitem{LaamPierDCDS} E.-H. Laamri, M. Pierre: \emph{Stationary reaction-diffusion systems in $L^1$ revisited.} Discrete Contin. Dyn. Syst. -S 14(2) (2021), 455--464.  
		
		\bibitem{Lamberton} D. Lamberton: \emph{Equations d'évolution linéaires associées à des semi-groupes de contractions dans les espace $L^p$.} J. Funct. Anal. 72 (1987), 252--262.
		
		
		\bibitem{LecMirRoq} A. Léculier, S. Mirrahimi, J.M. Roquejoffre: \emph{Propagation in a fractional reaction-diffusion equation in
			a periodically hostile environment.}  J. Dyn. Diff. Equat. 33 (2021), 863--890. 
		
		
		\bibitem{LPPS} T. Leonori, I. Peral, A. Primo,  F. Soria: \emph{Basic estimates for solutions of a class of nonlocal elliptic and parabolic equations.} Discrete Contin. Dyn. Syst.  35(12) (2015), 6031--6068.
		
		
		\bibitem{LisAl}
		A. Lischke, G. Pang, M. Gulian, F. Song, C. Glusa, X. Zheng, Z. Mao, W. Cai, M.M. Meerschaert, M. Ainsworth, G.E. Karniadakis:
		\emph{What is the Fractional Laplacian?} Journal of Computational Physics
		404 (2020), 109009.
		
		\bibitem{Malekthese} 
		R.~Malek:
		\newblock {\em Étude et simulations numériques de quelques problèmes concrets
			présentant ou non des singularités.} Ph.D thesis, Université  {M}oulay {I}smail de Meknès, Maroc, {M}ay 2022.
		
		\bibitem{BisRadServ} G. Molica Bisci, V.D. Radulescu, R. Servadei: \emph{\underline{Variational Methods For Nonlocal Fracti-} \underline{onal Problems}.}  Cambridge University Press, 2016.
		
		
				\bibitem{Morgan}  J. Morgan: \emph{Global existence for semilinear parabolic systems}. SIAM J.	Math. Ana. 20 (1989), 1128--1144.
		
		 \bibitem{MortonMayers}  {K.W. Morton, D. Mayers  : \underline{\it Numerical Solution of Partial Differential Equations: An In-}
		 		\\\underline{troduction}. Second Edition, Cambridge University Press, 2005.}
		
		
		\bibitem{Mur} 	J.D. Murray: \underline{\emph{Mathematical biology : I. An introduction.}} Springer-Verlag, Berlin, Heidlberg, 2002.
		
		
		
		\bibitem{Pazy}
		A. Pazy:
		\underline{\emph{Semigroups of Linear Operators and Applications to Partial Differential Equ-}}\\\underline{\emph{ations}}.
		Springer, New York, 1983.
		
		\bibitem{PierreL1} M. Pierre: \emph{An $L^1$~method to prove global existence in some reaction-diffusion systems}. In “Contributions to Nonlinear Partial Differential Equations”, Vol.II, Pitman Research notes, J.I. Diaz and P.L. Lions ed.,
		155 (1987), 220–231.
		
		
		\bibitem{Pierre3} M. Pierre: \emph{Weak solutions and supersolutions in $L^1$ for reaction-diffusion systems.}  J. Evol. Equ. 3 (2003), 153--168.
		
		\bibitem{PierreCours} M. Pierre: \emph{Syst\`{e}mes de r\'{e}action-diffusion.} \'{E}cole de printemps, \'{e}quations aux
		d\'{e}riv\'{e}es partielles non-lin\'{e}aires, Marrakech, 2008.
		
		
		\bibitem{PierreSurvey} M. Pierre: \emph{Global Existence in Reaction-Diffusion Systems with Control of Mass : a Survey.} Milan J. Math. 78 (2010), 417--455.
		
		
		
		\bibitem{PierRolland} M. Pierre, G. Rolland: {Global existence for a class of quadratic reaction-diffusion system with nonlinear diffusion
			and $L^1$ initial data}. Nonlinear Anal. TMA 138 (2017) 369–387.
		
		
		\bibitem{PieSch2000} M. Pierre, D. Schmitt: \emph{Blowup in Reaction-Diffusion Systems with Dissipation of Mass.} SIAM Rev. 42(1) (2000), 93--106.
		
		\bibitem{PierSuzYam2019} M. Pierre, T. Suzuki,  Y. Yamada : \textit{Dissipative reaction diffusion systems with quadratic growth}. Indiana Univ. Math. J. 68(1) (2019),  291--322.
		
		
		
		\bibitem{Pis}	L.M. Pismen: \underline{\emph{Patterns and interfaces in dissipative dynamics}}. Springer Science \& Business Media, 2006.
		
		
		\bibitem{Pod} I. Podlubny: \underline{\emph{Fractional Differential Equations}}. Academic Press, San Diego, 1999.
		
		
		
		\bibitem{QuitSoup} {P. Quittner, Ph. Souplet: \underline{\textit{Superlinear Parabolic Problems: Blow-up, Global Existence and Steady}} \underline{States}. Advanced Texts, Birkh\"{a}user, (2019).    }
		
		\bibitem{RebiBena}  B. Rebiai, S. Benachour: \textit{ Global classical solutions for reaction-diffusion systems with nonlinearities of exponential growth.} J. Evol. Equ. 10(3) (2010), 511--527.
		
		
		\bibitem{SchFis}	P. Scherer, S.F. Fischer: \underline{\emph{Reaction-Diffusion Systems}}. In : Theoretical Molecular Biophysics. Biological and Medical Physics, Biomedical Engineering. Springer, Berlin, Heidelberg, 2010. 
		
		\bibitem{Soup18} Ph. Souplet: \textit{Global existence for reaction-diffusion systems with dissipation of mass and quadratic growth}.
		J. Evol. Equ. 18(4) (2018),  1713--1720.
		
		\bibitem{SuzYam}  T. Suzuki, Y. Yamada :  \textit{Global-in-time behavior of Lotka-Volterra system with diffusion: skew-symmetric case}. Indiana Univ. Math. J. 64(1) (2015),  181--216. 
		
		
		
		
		
		
		
		
		\bibitem{TreZua}	E. Trélat, E. Zuazua: \emph{\underline{Numerical Control: Part A}}. First Edition, Elsevier Science \& Technology. Handbook of Numerical Analysis XXIII 23 (2022), 1--558,
		
		\bibitem{Val} E. Valdinoci: \emph{From the long jump random walk to the fractional Laplacian}. Bol. Soc. Esp. Mat. Apl. 49 (2009), 33--44.
		
		
	
			\bibitem{Vaz1}
			J.L. V\'{a}zquez:	\underline{\emph{Nonlinear Diffusion with Fractional Laplacian
					Operators}}.  In: Holden, H., Karlsen, K. (eds) Nonlinear Partial Differential Equations. Abel Symposia 7. Springer, Berlin, Heidelberg, 2012.
			%
			\bibitem{Vaz2}
			J.L. V\'{a}zquez:
			\emph{The mathematical theories of diffusion. Nonlinear and fractional diffusion.} Springer Lecture Notes in Mathematics, CIME Subseries, 2017.
		
		
		\bibitem{Vrabie}
		I.I. Vrabie:
		\emph{\underline{C$_0$-Semigroups and Application}.}
		Elsevier Science B.V., 2003.
		
		
	\end{thebibliography}
\end{document}